\let\csname equation*\endcsname\relax
\let\csname endequation*\endcsname\relax
\numberwithin{equation}{section}
\numberwithin{figure}{section}
\newcommand\tabcaption{\def\@captype{table}\caption}
\newtheorem{thm}{Theorem}[section]
\newtheorem{cor}[thm]{Corollary}
\newtheorem{lem}[thm]{Lemma}
\newtheorem{prop}[thm]{Proposition}
\newtheorem{defn}[thm]{Definition}
\newtheorem{rem}[thm]{Remark}
\newcommand{\calI}{\mathcal{I}}
\newcommand{\bfe}{\mathbf{e}}
\newcommand{\Dcut}{\mathbf{D}_{cut}}
\newcommand{\Xhat}{\widehat{X}}
\newcommand{\Chat}{\widehat{C}}
\newcommand{\E}{\mathbb{E}}
\newcommand{\Prob}{\mathbb{P}}
\newcommand{\reals}{\mathbb{R}}
\newcommand{\unit}{\mathds{1}}
\newcommand{\Xbar}{\overline{X}}
\newcommand{\Rhat}{\widehat{R}}
\newcommand{\dist}{\mathbf{d}}
\newcommand{\Khat}{\widehat{K}}
\newcommand{\ehat}{\hat{e}}
\newcommand{\bfP}{\mathbf{P}}
\newcommand{\bfD}{\mathbf{D}}
\newcommand{\Riccati}{\mathcal{R}}
\newcommand{\Mhat}{\widehat{M}}
\newcommand{\calB}{\mathcal{B}}
\begin{document}
\title{Performance analysis of local ensemble Kalman filter}
\author{Xin T. Tong \thanks{National University of Singapore, mattxin@nus.edu.sg}}
\maketitle
\begin{abstract}
Ensemble Kalman filter (EnKF) is an important data assimilation method for high dimensional geophysical systems.
Efficient implementation of EnKF in practice often involves the localization technique, 
which updates each component using only information within a local radius. 
This paper rigorously analyzes the local EnKF (LEnKF) for linear systems, 
and shows that the filter error can be dominated by the ensemble covariance,
as long as 1) the sample size exceeds the logarithmic of state dimension and a constant that depends only on the local radius;
2) the forecast covariance matrix admits a stable localized structure.
In particular, this indicates that with small system and observation noises, the filter error will be accurate in long time even if the initialization is not.
The analysis also reveals an intrinsic inconsistency caused by the localization technique, 
and a stable localized structure  is necessary to control this inconsistency.
While this structure is usually taken for granted for the operation of LEnKF,
it can also be rigorously proved for linear systems with sparse local observations and weak local interactions. 
These theoretical results are also validated by numerical implementation of LEnKF on a simple stochastic turbulence in two dynamical regimes. 
\end{abstract}

\section{Introduction}
\label{sec:intro}
Data assimilation is a sequential procedure, in which observations of a dynamical system are incorporated to improve the forecasts of that system. 
In many of its most important geoscience and engineering applications, the main challenge  comes from the high dimensionality of the system. 
For contemporary atmospheric models, the dimension can reach $d\sim 10^8$, and the classical particle filter is no longer feasible \cite{SBB08, vLee09}. 
The ensemble Kalman filter (EnKF) was invented by meteorologists \cite{And01, HWS01, evensen03} to resolve this issue. 
By sampling the forecast uncertainty with a small ensemble, and then employing  Kalman filter procedures to the empirical distribution, EnKF can often capture the major uncertainty and produce accurate predictions. The simplicity and efficiency of EnKF have made it a popular choice for weather forecasting and oil reservoir management \cite{MH12, kalnay03}.

One fundamental technique employed by EnKF is localization \cite{HM98, HWS01, WH02, MY07, HKS07}. 
In most geophysical applications, each component $[X]_i$ of the state variable $X$ holds information of one spatial location. 
There is  a natural distance $\dist(i,j)$ between two components. 
In most physical systems, the covariance between  $[X]_i$ and $[X]_j$ is formed by information propagation in space, 
intuitively its strength decays with the distance $\dist(i,j)$. 
In particular, when $\dist(i,j)$ exceeds a threshold $L$, 
the covariance is approximately zero.
This is a special sparse and localized structure
that can be exploited in the EnKF operation.
In particular, the forecast covariance can be artificially enforced as zero if $\dist(i,j)>L$.
In other words, there is no need to sample these covariance terms, and indeed sampling from them leads to higher errors  \cite{HWS01}.
Such modification significantly reduces the sampling difficulty and the associated sample size.
 This is crucial for EnKF operation, since often only a few hundred samples can be generated in practice.
Various versions of localized EnKF (LEnKF) are derived based on this principle, 
and there is ample numerical evidence showing their performance is robust against the growth of dimension  \cite{HWS01, WH02, MY07, HKS07, BR10, JNASS11, NJSH12, Ner15, KR16, CH17}. 
Moreover, there is a growing interest in applying the same technique to the classical particle filters \cite{VR15, ML16, Pot16}. 

While there is a consensus on the importance of the localization technique for EnKF, 
currently there is no rigorous explanation of its success. 
This paper contributes to this issue by showing that in the long run, the LEnKF can reach its estimated performance for linear systems,
if the ensemble size $K$ exceeds $D_L \log d$, and the ensemble covariance matrix admits a stable localized structure of radius $L$. 
The  constant $D_L$  above depends on the radius $L$ but not on $d$. 

Showing the necessary sampling size has only logarithmic dependence on $d$ is our major interest. 
In the simpler scenario of sampling a static covariance matrix,
 \cite{BL08} shows that the necessary sample size scales with $D_L \log d$.
Generalizing this result to the setting of EnKF is highly nontrivial, since
the target covariance matrix evolves constantly in time, 
and the sampling error at one time step has a  nonlinear impact on future iterations.
By analyzing the filter forecast error evolution, 
and compare it with the filter covariance evolution, 
we show the filter error covariance can be dominated by the ensemble covariance with high probability.
In other words, the LEnKF can reach its estimated performance.
One important corollary  is that if the system and observation noise are of scale $\sqrt{\epsilon}$, 
then the error covariance scales as $\epsilon$, which indicates that LEnKF can be accurate regardless of the initial condition. 
Such property is often termed as accuracy for practical filters or observers \cite{SS15, MT17cpam, KS17}.  


Interestingly, our analysis also captures an intrinsic inconsistency caused by the localization technique.
Generally speaking, the localization technique can be applied to the ensemble covariance matrix,
but not the ensemble. 
However, the Kalman update is applied to the ensemble, but not to the localized ensemble covariance matrix. 
As these two operations do not commute, an inconsistency emerges, which we will call the localization inconsistency.
This phenomenon has been mentioned in \cite{WH02, WHWST08}.
Moreover,  \cite{Ner15} numerically examines its role with serial observation processing, 
and shows that it may lead to significant filter error.
In correspondence to these findings, one crucial step in our analysis is showing that the localization inconsistency is controllable, 
if the forecast covariance matrix indeed has a localized structure.

While most applications of LEnKF assume the underlying covariance matrices are localized, 
rigorous justification of this assumption is sorely missing in the literature. 
A recent work \cite{BMP17} considers applying a projection to the continuous time Kalman-Bucy filter,  
and shows that if the projection is a small perturbation on the covariance matrix, its impact on the filter process is also small.
It is shown through an example that if the filter system can be decoupled into independent local parts,
a projection similar to the LEnKF localization procedure can be made. 
Unfortunately, in most practical problems, all spatial dimensions are coupled with local interactions,
and it is very difficult to show that the localization procedure is a small perturbation. 

This paper partially investigates the theoretical gaps mentioned above. 
We show that for linear systems with weak local interactions and sparse local observations, 
the localized structure is stable for the LEnKF ensemble covariance. 
Weak local interaction is an intuitive requirement, 
else fast information  propagation will form strong covariances between far away locations.
Sparse local observation, on the other hand, is assumed to simplify the assimilation formulas. 

In rough words, our main results consist of the following statements.
\begin{enumerate}
\item To sample a localized covariance matrix correctly, the necessary sample size scales with $D_L\log d$ (Theorem \ref{thm:localconcen}).
This reveals the sampling advantage gained by applying the localization procedure. 
\item While localization improves the sampling, it creates an inconsistency in the assimilation steps.  For the LEnKF ensemble covariance to capture the  filter error covariance with $D_L\log d$ samples, the localization inconsistency needs to be small  (Theorem \ref{thm:main}).
\item One way to guarantee a small localization inconsistency, is to have a stable localized structure in the forecast ensemble covariance matrix (Proposition \ref{lem:Kalmanlocalizationerror}).
\item The LEnKF  forecast covariance has a stable localized structure, if the underlying linear system has weak  interactions and sparse local observations.   (Theorem \ref{thm:formloc}). So by points 2 and 3, we know that LEnKF has good forecast skills, since its ensemble covariance captures the true filter error covariance. 
\item The results above scale linearly with the variance of the  noises. So when applying LEnKF to a linear system with small system and observation noises, its long time performance is accurate  (Theorem \ref{thm:accuracy}). 
\end{enumerate}
Section \ref{sec:mainresult} will provide the setup of our problem, and present the precise statements of the main results. The implication of these results on the issue of localized radius is discussed in Section \ref{sec:radius}. 

Section \ref{sec:num} verifies the theoretical results by implementing LEnKF on a stochastically forced dissipative advection equation \cite{MH12}. One stable and one unstable dynamical regimes are tested. In both of them, LEnKF have shown robust forecast skill with only $K=10$ ensemble members, while the dimension varies between $10$ and $1000$. Moreover the localized covariance structure and the accuracy with small noises can also be verified for LEnKF in both regimes.

Section \ref{sec:concen} investigates the covariance sampling problem of LEnKF, and proves Theorem \ref{thm:localconcen}. Section \ref{sec:error} analyzes the localization inconsistency and filter error evolution. It contains the proofs of Theorem \ref{thm:main} and 
Proposition \ref{lem:Kalmanlocalizationerror}. Section \ref{sec:closeneighbor} studies the localized structure of linear systems with weak local interactions and sparse observations, and  shows that the small noise scaling can be applied to our results. Section \ref{sec:conclude} concludes this paper and discusses some interesting extensions.

\section{Main Results}
\label{sec:mainresult}
\subsection{Problem Setup}
\label{sec:setup}
Since its invention, the ensemble Kalman filter (EnKF) has been modified constantly for two decades, and its formulation has become rather sophisticated today. In this subsection we briefly review some of the key modifications, in particular the localization techniques. 

The following notations will be used throughout the paper. For two vectors $a$ and $b$, $\|a\|$ denotes the $l_2$ norm of $a$, $a\otimes b$ denotes the matrix $a b^T$. Square bracket with subscripts indicates a component or entry of an object. So $[a]_i$ is the $i$-th component of vector $a$. In particular, we use $\bfe_i$ to denote the $i$-th standard basis vector, i.e. $[\bfe_i]_j=\unit_{i=j}$. 

Given a matrix $A$, $[A]_{i,j}$ is the $(i,j)$-th entry of $A$. The   $l_2$ operator norm is denoted by $\|A\|=\inf\{c: \|Av\|\leq c\|v\|,\forall v\}$. The $l_\infty$ operator norm is denoted by $\|A\|_1=\max_i \sum_{j} |[A]_{i,j}|$. The maximum absolute entry is denoted by $\|A\|_\infty=\max_{i,j}|[A]_{i,j}|$.  We also use $I_m$ to denote the $m\times m$ dimensional identity matrix.  Given two matrices $A$ and $D$, their Schur (Hadamard) product can be defined by entry wise product
\[
[A\circ D]_{i,j}=[A]_{i,j}[D]_{i,j}.
\]
For two real symmetric matrices $A$ and $B$, $A\preceq B$ indicates that $B-A$ is positive semidefinite.

\subsubsection*{Ensemble Kalman Filter}
\label{sec:EnKF}
In this paper, we consider a linear system in $\reals^d$ with partial observations,
\begin{equation}
\label{sys:signalobs}
\begin{gathered}
 X_{n+1}=A_n X_n+b_n +\xi_n,\quad \xi_{n+1}\sim \mathcal{N}(0, \Sigma_n),\\
 Y_{n+1}=H X_{n+1}+\zeta_n,\quad \zeta_{n+1}\sim \mathcal{N}(0, \sigma_o^2 I_q). 
\end{gathered}
\end{equation}
Throughout our discussion, we assume the matrices $A_n,\Sigma_n$ are bounded:  
\[
\|A_n\|\leq M_A,\quad m_\Sigma I_d\preceq \Sigma_n\preceq M_\Sigma I_d.
\]
The time-inhomogeneous generality can be used to model intermittent dynamical systems \cite{MH12, MT16uq}. We assume that the observations are made at $q<d$ distinct locations $\{o_1, o_2,\cdots, o_q\}\subset \{1,\cdots,d\}$. This can be modelled by letting
\begin{equation}
\label{sys:observation}
[H]_{k,j}=\unit_{j=o_k},\quad 1\leq k\leq q, 1\leq j\leq d. 
\end{equation}
Note  that the operator norm $\|H\|=1$. 

It is well known that the optimal estimate of $X_n$ given historical observations $Y_1,\ldots, Y_n$ is provided by the Kalman filter \cite{ LS01}, assuming $X_0$ is Gaussian distributed. Unfortunately, direct implementation of the Kalman filter involves a stepwise computation complexity of $O(d^2 q)$. When the state dimension $d$ is high, the Kalman filter is not computationally feasible. 

The ensemble Kalman filter (EnKF) is invented by meteorologists \cite{evensen03} to reduce the computation complexity. $K$ samples of \eqref{sys:signalobs} are updated using the Kalman filter rules, and their ensemble mean and covariance are employed to estimate the signal $X_n$. In specific, suppose the posterior ensemble for $X_n$ is denoted by $\{X_n^{(k)}\}_{k=1,\ldots, K}$. The forecast ensemble of $X_{n+1}$ is first generated by propagating the linear system in \eqref{sys:signalobs}:
\[
\Xhat^{(k)}_{n+1}=A_n X^{(k)}_n+b_n+\xi^{(k)}_{n+1},\quad \xi^{(k)}_{n+1}\sim \mathcal{N}(0, \Sigma_n).\\
\]
The EnKF then estimates $X_{n+1}$ with a prior distribution $\mathcal{N}(\overline{\Xhat}_{n+1}, \Chat_{n+1})$, where the mean and covariance are obtained by the forecast ensemble:
\[
\overline{\Xhat}_{n+1}=\frac1K\sum_{k=1}^K \Xhat^{(k)}_{n+1},\quad \Delta \Xhat^{(k)}_{n+1}:=\Xhat^{(k)}_{n+1}-\overline{\Xhat}_{n+1},\quad \Chat_{n+1}=\frac1K\sum_{k=1}^K \Delta \Xhat^{(k)}_{n+1}\otimes \Delta \Xhat^{(k)}_{n+1}. 
\]
Applying the Bayes' formula to the prior distribution and the linear observation $Y_{n+1}$, a target Gaussian posterior distribution for $X_{n+1}$ can be obtained. There are several ways to update the forecast ensemble so its statistics approximate the target ones. Here we consider the standard EnKF in \cite{evensen03, MH12} with artificial perturbations:
\begin{equation}
\label{eqn:assimil}
X^{(k)}_{n+1}=(I-\widetilde{K}_{n+1} H ) \Xhat^{(k)}_{n+1}  +\widetilde{K}_{n+1}  Y_{n+1} -\Khat_{n+1}\zeta^{(k)}_{n+1}.
\end{equation}
The Kalman gain matrix is given by $\widetilde{K}_{n+1}=\Chat_{n+1} H^T(\sigma_o^2 I_q+ H \Chat_{n+1} H^T)^{-1}$. The $\zeta^{(k)}_{n+1}$ are independent noises sampled from $\mathcal{N}(0, \sigma_o^2I_q)$.

The computation complexity of EnKF is roughly $O(K^2 d)$, assuming $A_n$ and $\Sigma_n$ are sparse \cite{Man06}. In practice,  the ensemble size $K$ is often less than a few hundred, so the operational speed is significantly improved. On the other hand, with the sample size $K$ much smaller than the state space dimension $d$, the sample covariance $\Chat_{n+1}$ often produces spurious correlations \cite{BLE98, evensen03}. Spurious correlations may seriously reduce the filter accuracy, since the Kalman filter operation hinges heavily on the correctness of covariance estimation. The localization techniques are often employed to resolve such problems. 

\subsubsection*{Localization techniques}
 In most geophysical applications, each dimension index $i\in \{1,\ldots, d\}$ corresponds to a  spatial location.  For simplicity, we assume different indices correspond to different spatial locations. Let $\dist(i,j)$ be the spatial distance between the locations $i$ and $j$ specify, then $\dist$ is also a distance on the index set $\{1,\ldots,d\}$. In other words,
 \begin{itemize}
 \item $\dist(i,j)=0$ if and only if $i=j$;
 \item  $\dist(i,j)=\dist(j,i)$;
 \item  $\dist(i,j)+\dist(j,k)\geq \dist(i,k)$.
 \end{itemize} 
 For a simple example,  one can correspond  index $i$ with  the integer $i$, then $\dist(i,j)=|i-j|$ clearly defines a distance. 

For most geophysical problems that can be  modeled by a (stochastic) partial differential equation, the covariance between two locations is caused by the propagation of information through local interactions. Information often is also dissipated during its propagation, so  its impact gets less significant when it reaches far-away locations. This leads to a localized covariance structure. In other words, there is a decreasing function $\phi:[0, \infty)\mapsto [0,1]$, $\phi(0)=1$ such that 
\[
[C_n]_{i,j}\propto \phi (\dist(i,j)).
\]
In geophysical applications, a localization radius $l$ is often defined, so $\phi(x)=0$ for $x>l$. Consequentially, it is natural to model the localization function as 
\begin{equation}
\label{eqn:local}
[\bfD_l]_{i,j}=\phi(\dist(i,j)).
\end{equation}
In particular,  the widely used  Gaspari-Cohn matrix \cite{GC99}  is of this form with
\begin{equation}
\label{eqn:GC}
\phi(x)=\left(1+\frac{x}{c_l}\right) \exp\left(-  \frac{x}{c_l}\right) \unit_{x\leq l},
\end{equation}
where the radius is often picked with $l=\sqrt{10/3}c_l$ or $2c_l$ \cite{Lor03}. Another simple localization matrix  corresponds to the cutoff or heavyside function $\phi(x)=\unit_{x\leq l}$, and we denote it by $\Dcut^l$. In other words
\begin{equation}
\label{eqn:Dcut}
[\Dcut^l]_{i,j}=\unit_{\dist(i,j)\leq l}.
\end{equation}
As a remark, while \eqref{eqn:GC} is more useful in practice, \eqref{eqn:Dcut} is much simpler for theoretical analysis and interpretation. Most of our analysis results in below only apply to \eqref{eqn:Dcut}, except Theorem \ref{thm:localconcen}. It will be very interesting to generalize the analysis framework here for localization functions like \eqref{eqn:GC}. 

The notion of  localization radius is closely related to the  \emph{bandwidth} of a matrix \cite{BL12}. For a matrix $A$, we define its bandwidth as:
\begin{equation}
\label{eqn:radius}
l:=\inf\{x\geq 0: [A]_{i,j}=0\quad \text{if}\quad \dist(i,j)>x\}.
\end{equation}
The bandwidth roughly captures how fast different components interact with each other. If $A$ has bandwidth $l$,  each component interacts with at most $\calB_l$ components when product with $A$, where the volume constant $\calB_l$ is defined by
\begin{equation}
\label{eqn:calB}
\calB_l=\max_i \#\{j: \dist(i,j)\leq l\}.
\end{equation}

A localized covariance  structure is extremely useful for EnKF. It indicates only covariances between nearby indices are worth sampling. By ignoring the far apart covariances,  the  necessary sampling size can be significantly reduced. To apply this idea, the localization technique modifies the Kalman gain matrix in \eqref{eqn:assimil}, and ensures the assimilation updates from far away observation is insignificant. There are  two main types of localization methods in the literature, domain localization and covariance localization \cite{NJSH12}. This paper discusses only the former, while similar analysis should in principal applies to the latter as well.

With domain localization, the $i$-th component is updated using only observations of indices within distance $l$, which are elements of $\calI_i=\{j: \dist(i,j)\leq l\}$. Let $\bfP_{\calI_i}$ be the projection matrix of  a $\reals^d$ vector to its components on $\calI_i$, note that it is diagonal so it is symmetric. Then $\Chat^i_{n+1}:=\bfP_{\calI_i} \Chat_{n+1} \bfP_{\calI_i}$  contains the local covariance relevant to the $i$-th component. The corresponding Kalman gain is 
\begin{equation}
\label{eqn:Ki}
K^i_{n+1}=\Chat^i_{n+1} H^T(\sigma_o^2I_q+H \Chat^i_{n+1}H^T )^{-1},
\end{equation}
and the $i$-th component is updated using the $i$-th row of \eqref{eqn:Ki}, namely $\bfe_i \bfe_i^T K^i_{n+1}$. Again $\bfe_i$  is the $i$-th standard basis vector of $\reals^d$. The final Kalman gain matrix patches all rows together
\begin{equation}
\label{eqn:Khat1}
\Khat_{n+1}=\sum_{i=1}^d  \bfe_i \bfe_i^T K^i_{n+1}.
\end{equation}
Since each $K^i_{n+1}$ has nonzero entries only with indies in $\calI_i\times \calI_i$, $\Khat_{n+1}H$ is of bandwidth $l$ as well. The proof in Proposition \ref{lem:Kalmanlocalizationerror} below verifies this. Therefore, each component is updated using observations of distance at most $l$ from it.

\subsubsection*{Localized EnKF with covariance inflation}
Other than spurious correlations, a small sampling size also jeopardizes the EnKF operation, as the forecast covariance is often undervalued \cite{FB07, LKM09, MT17cpam}. In order to resolve this issue, the covariance needs to be inflated with a fixed ratio $r>1$. \cite{MT17cpam} has shown these modification are pivotal to EnKF performance. We also incorporate this idea in our LEnKF. 

In summary, the localized EnKF (LEnKF) updates an posterior ensemble $\{X_n^{(k)},k=1,\cdots, K\}$ of its mean $\Xbar_n=\frac1K\sum_{k=1}^K X^{(k)}_n$  and spread $\Delta X_n^{(k)}=X_n^{(k)}-\Xbar_n$ through the following steps with $\Khat_{n+1}$ given by  \eqref{eqn:Ki} and \eqref{eqn:Khat1}:
\begin{equation}
\label{sys:EnKFloc}
\begin{gathered}
\overline{\Xhat}_{n+1}= A_n \Xbar_n+b_n,\quad \Delta \Xhat^{(k)}_{n+1}=\sqrt{r}(A_n \Delta X^{(k)}_n+\xi^{(k)}_{n+1}),\quad \xi^{(k)}_{n+1}\sim \mathcal{N}(0, \Sigma_n),\\
\Chat_{n+1}=\frac1K\sum_{k=1}^K \Delta \Xhat^{(k)}_{n+1}\otimes \Delta \Xhat^{(k)}_{n+1},\quad \Xbar_{n+1}=(I-\Khat_{n+1} H ) \overline{\Xhat}_{n+1}+\Khat_{n+1} Y_{n+1},\\
\Delta X^{(k)}_{n+1}=(I-\Khat_{n+1} H ) \Delta \Xhat^{(k)}_{n+1} +\Khat_{n+1}\zeta^{(k)}_{n+1}, \quad \zeta^{(k)}_{n+1}\sim\mathcal{N}(0, \sigma_o^2 I_q). 
\end{gathered}
\end{equation}
The posterior covariance matrix can be obtained through the spread
\[
C_{n+1}=\frac1K\sum_{k=1}^K \Delta X^{(k)}_{n+1}\otimes \Delta X^{(k)}_{n+1}.
\]
Note here  we  update the mean and ensemble spread, the $\Delta$ terms, separately. This is different from the standard EnKF, since the average noise terms $\frac1K\sum  \xi^{(k)}_{n+1}$ and $\frac1K\sum  \zeta^{(k)}_{n+1}$ are ignored for simplicity. Also the sum of the ensemble spread, $\sum \Delta X_{n}^{(k)}$, may not be zero. On the other hand, these differences are small by the law of large numbers. The proofs can also be generalized to admit these terms, but the discussion will be notationally complicated.  

One classical property of the Kalman filter is that the filter covariances and the Kalman gain matrices are predetermined with no dependence on the realization of system \eqref{sys:signalobs}. This is inherited by the LEnKF \eqref{sys:EnKFloc},  the covariances and Kalman gain depend only on the sample noise $\xi_n^{(k)}, \zeta_n^{(k)}$ realizations, but not on $(X_n, Y_n)$. 

To illustrate, consider the filtration generated by sample noise realization, 
\begin{equation}
\label{eqn:filterationS}
\mathcal{F}^S_{n}=\sigma\{\Delta \Xhat^{(k)}_0, \xi_{t}^{(k)},\zeta_{t-1}^{(k)}, t=1,\ldots, n, k=1,\ldots, K\}.
\end{equation}
Using induction, it is easy to verify the ensemble spread, ensemble covariance and Kalman gain, are all $\mathcal{F}^S_n$ adapted:
\[
\Delta \Xhat^{(k)}_{n}, \Delta X^{(k)}_{n-1}, \Chat_{n}, C_{n-1}, \Khat_{n}\in \mathcal{F}^S_n. 
\]
The corresponding conditional expectation is denoted by $\E_{\mathcal{F}^S_n}$. We will use $\mathcal{F}^S_\infty=\bigvee\mathcal{F}^S_n$ to denote the $\sigma$-field for all ensemble spread information.  

The other randomness of EnKF comes from the realization of system \eqref{sys:signalobs}. We can average out this part of randomness by conditioning on $\mathcal{F}^S_\infty$, which we will denote as $\E_S$. This is useful when comparing the filter error and sample covariance. The natural filtration generated by all random outcome at time $n$ is 
\[
\mathcal{F}_{n}=\sigma\{X_0, \Xhat^{(k)}_0,\xi_{t},\zeta_{t-1},  \xi_{t}^{(k)},\zeta_{t-1}^{(k)}, t=1,\ldots, n, k=1,\cdots,K\}.
\]
We will denote the conditional expectation with $\mathcal{F}_n$ as $\E_n$.

\subsection{Sampling errors of localized forecast covariance}
Since EnKF relies on the ensemble forecast covariance matrix to assimilate new observations, its performance depends on the accuracy of the sampling procedure. The sampling procedure updates the forecast matrix from time $n$ to $n+1$. 

Given the forecast ensemble covariance $\Chat_n$, based on the Kalman update rule, the inflated target  forecast covariance at $n+1$ is given by $r\Riccati_n(\Chat_n)$, with the posterior Riccati map
\begin{equation}
\label{eqn:Riccati}
\Riccati_n(\Chat_n):=A_n (I-\Khat_nH)\Chat_n (I-\Khat_nH)^TA_n^T+\sigma_o^2 A_n\Khat_n \Khat_n^TA^T_n+\Sigma_n. 
\end{equation}
The real ensemble  forecast covariance $\Chat_{n+1}=\frac1K \sum \Delta \Xhat^{(k)}_{n+1}\otimes \Delta \Xhat^{(k)}_{n+1}$  is  generated by the ensemble spread
\begin{equation}
\label{tmp:Delta}
\Delta \Xhat^{(k)}_{n+1}=\sqrt{r} A_n (I-\Khat_nH) \Delta \Xhat^{(k)}_n+\sqrt{r} A_n\Khat_n \zeta^{(k)}_n+ \sqrt{r} \xi^{(k)}_{n+1}. 
\end{equation}
It is straight forward to verify the average of $\Chat_{n+1}$ over $\zeta^{(k)}_n$ and $\xi^{(k)}_{n+1}$ matches $\Riccati_n(\Chat_n)$, that is, $\E_n \Chat_{n+1}=\Riccati_n(\Chat_n)$. 
 
In order to control the sampling error $\|\Chat_{n+1}-r\Riccati_n(\Chat_n)\|$, it is necessary to have a sufficiently large $K$. Unfortunately, the size of $K$ would need to grow linearly with $d$ \cite{BL08}.  As a simple example,  let $\Chat_n=\Khat_n=0$, $\Sigma_n=I_d$, $r=1$, then $\Delta \Xhat^{(k)}_{n+1}=\xi^{(k)}_{n+1}$ are i.i.d. samples  from $\mathcal{N}(0, I_d)$, and the target sample matrix is $I_d$. Yet $\|\Chat_{n+1}\|=1+\sqrt{d/K}$ with high probability by the Bai-Yin's law \cite{Ver11}. In  practical settings, $K\ll d$, so the sample covariance is unlikely to be correct. 

As discussed in Section \ref{sec:setup}, the main idea of localization is that we assume the target covariance $\Riccati_n(\Chat_n)$  is localized, so it suffices to consider $\Riccati_n(\Chat_n)\circ \bfD_L$, which can be sampled by $\Chat_{n+1}\circ\bfD_L$. Here $\bfD_L$ can be any matrix of form \eqref{eqn:local}, where its radius $L$ does not need to match $l$ used in \eqref{eqn:Ki}. In fact, we will mostly use $\bfD_L=\Dcut^L$ \eqref{eqn:Dcut} with $L\geq 4l$ in our discussion. 
One important advantage gained by localization is that, in order for the covariance sampling to be accurate,  
that is $\|(\Chat_{n+1}-\Riccati_n(\Chat_n))\circ \bfD_L\|$ to be small, 
the necessary sample size scales only with $D_L \log d$, instead of $d$, where $D_L$ is some constant that only depends on $L$. This phenomenon was discovered in statistics \cite{BL08}, assuming the samples are generated from one fixed distribution. But in EnKF, the conditional mean of each sample is different, i.e. $\E_n \Delta \Xhat^{(k)}_{n+1}=\sqrt{r}A_n (I-\Khat_n H)\Delta \Xhat_n^{(k)}$ . A generalization of \cite{BL08} is our first result:

\begin{thm}
\label{thm:localconcen}
For any fixed group of $a_k\in \reals^d$, $k=1,\ldots, K$,  and $K$ i.i.d. samples $z_k \sim \mathcal{N}(0,\Sigma_z)$. Consider the sample covariances
\[
Z=\frac{1}{K}\sum_{k=1}^K (a_k+ z_k)\otimes (a_k + z_k),\quad \Sigma_a=\frac1K\sum_{k=1}^Ka_k\otimes a_k.
\]
Let 
\[
\sigma_{a,z}=\max_{i,j}\{[\Sigma_z]_{i,i}, [\Sigma_a]_{i,i}^{1/2}[\Sigma_z]_{j,j}^{1/2}\}.
\]  
$Z$ concentrates around its mean in the following two ways, where $c$ is an  absolute constant:
\begin{enumerate}[a)]
\item Schur product with a symmetric matrix $\bfD_L$.  For any $t\geq 0$
\[
\Prob(\|(Z-\E Z)\circ \bfD_L\|\geq \|\bfD_L\|_1 \sigma_{a,z} t)\leq  8\exp\left(2\log d -c K \min\{t,t^2\}  \right).
\]
Recall that  $\|\bfD_L\|_1:=\max_{i}\sum_{j=1}^d|[\bfD_L]_{i,j}|$, which is often independent of $d$.
\item Entry-wise. Consider $\|Z-\E Z\|_\infty=\max_{i,j}|[Z-\E Z]_{i,j}|$, then for any $t\geq 0$
\[
\Prob(\|Z-\E Z\|_{\infty}\geq \sigma_{a,z} t)\leq 8\exp\left(2\log d - c  K\min\{t,t^2\}  \right).
\]
\end{enumerate}
\end{thm}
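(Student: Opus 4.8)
The strategy I will follow is to reduce everything to an entry-wise sub-exponential tail estimate, and then obtain the Schur-product bound (a) from the entry-wise bound (b) via an elementary operator-norm inequality for banded symmetric matrices; this last step is what makes the prefactor $\|\bfD_L\|_1$ appear, in place of the $\epsilon$-net argument used in the original fixed-mean result \cite{BL08}.

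Since each $z_k$ is centered, the mixed terms drop out in expectation and $\E Z = \Sigma_a + \Sigma_z$, so that
\[
Z-\E Z=\frac1K\sum_{k=1}^K\Big(a_k\otimes z_k+z_k\otimes a_k+\big(z_k\otimes z_k-\Sigma_z\big)\Big).
\]
Fixing indices $i,j$ and reading off the $(i,j)$ entry gives the decomposition $[Z-\E Z]_{i,j}=G_{ij}+W_{ij}$ with
\[
G_{ij}=\frac1K\sum_{k=1}^K\big([a_k]_i[z_k]_j+[a_k]_j[z_k]_i\big),\qquad W_{ij}=\frac1K\sum_{k=1}^K\big([z_k]_i[z_k]_j-[\Sigma_z]_{i,j}\big),
\]
where $G_{ij}$ is a centered Gaussian and $W_{ij}$ is a normalized sum of i.i.d.\ centered Gaussian-chaos variables. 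The plan is: (i) estimate $\E|G_{ij}|^2$ by expanding the square and using $|[\Sigma_z]_{i,j}|\le([\Sigma_z]_{i,i}[\Sigma_z]_{j,j})^{1/2}$ together with AM--GM, which yields $\E|G_{ij}|^2\le \tfrac2K\big([\Sigma_a]_{i,i}[\Sigma_z]_{j,j}+[\Sigma_a]_{j,j}[\Sigma_z]_{i,i}\big)\le \tfrac{4\sigma_{a,z}^2}{K}$ — this is exactly where the cross term in the definition of $\sigma_{a,z}$ enters, and it is the only genuinely new ingredient compared with \cite{BL08} — so the Gaussian tail gives $\Prob(|G_{ij}|\ge\sigma_{a,z}t)\le 2\exp(-cKt^2)$; (ii) writing $[z_k]_i,[z_k]_j$ through a correlated standard Gaussian pair and using $gg'=\tfrac14\big((g+g')^2-(g-g')^2\big)$, observe that each summand of $W_{ij}$ is sub-exponential with $\psi_1$-norm $\lesssim([\Sigma_z]_{i,i}[\Sigma_z]_{j,j})^{1/2}\le\sigma_{a,z}$, so Bernstein's inequality gives $\Prob(|W_{ij}|\ge\sigma_{a,z}t)\le 2\exp(-cK\min\{t,t^2\})$; (iii) splitting the level $t$ across the two pieces, conclude $\Prob(|[Z-\E Z]_{i,j}|\ge\sigma_{a,z}t)\le C\exp(-cK\min\{t,t^2\})$.

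Given this, part (b) follows from a union bound over the at most $d^2$ index pairs, which contributes the $2\log d$ in the exponent. For part (a), note that $(Z-\E Z)\circ\bfD_L$ is symmetric (both factors are), and for a symmetric matrix $M$ one has $\|M\|\le\max_i\sum_j|[M]_{i,j}|$; applying this with $M=(Z-\E Z)\circ\bfD_L$ and bounding
\[
\sum_{j}\big|[Z-\E Z]_{i,j}\big|\,\big|[\bfD_L]_{i,j}\big|\le \|Z-\E Z\|_\infty\sum_{j}\big|[\bfD_L]_{i,j}\big|\le \|\bfD_L\|_1\,\|Z-\E Z\|_\infty
\]
gives $\|(Z-\E Z)\circ\bfD_L\|\le\|\bfD_L\|_1\|Z-\E Z\|_\infty$, so (a) is immediate from (b).

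The conceptual content here is light; the real work — and where I expect to spend most of the effort — is the bookkeeping needed to push all the sub-Gaussian and sub-exponential parameters through the decomposition so they collapse to the single scale $\sigma_{a,z}$, and in particular to verify that the mixed $a_k\otimes z_k$ contribution (which has no analogue in \cite{BL08}) is controlled by precisely the off-diagonal term $[\Sigma_a]_{i,i}^{1/2}[\Sigma_z]_{j,j}^{1/2}$. Minor care is also needed to center the chaos summand before invoking Bernstein and to track the absolute constants so that the final prefactor is genuinely independent of $d$ and $K$.
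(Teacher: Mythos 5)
Your proposal is correct and follows essentially the same route as the paper: part (a) is reduced to part (b) via the symmetric row-sum bound $\|(Z-\E Z)\circ \bfD_L\|\leq \|\bfD_L\|_1\|Z-\E Z\|_\infty$, part (b) follows from a union bound over $d^2$ entries, and each entry is split into a Gaussian cross term plus a centered Gaussian quadratic term, both controlled at scale $\sigma_{a,z}$. The only (immaterial) difference is that you bound the quadratic term by Bernstein's inequality for sub-exponential products after an entrywise polarization, whereas the paper polarizes at the level of the quadratic form $u^T(Z-\E Z)u$ with $u=\bfe_i\pm\bfe_j$ and invokes Hanson--Wright with the diagonal matrix $\frac1K(u^T\Sigma_z u)I_K$, which amounts to the same tail estimate.
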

In application to LEnKF, we will let 
\[
a_k=\sqrt{r} A_n (I-\Khat_nH) \Delta \Xhat^{(k)}_n,\quad z_k=\sqrt{r} A_n\Khat_n \zeta^{(k)}_n+ \sqrt{r} \xi^{(k)}_{n+1},
\]
and Theorem \ref{thm:localconcen} shows that $\Chat_{n+1}\circ \bfD_L$ concentrates around $r \Riccati_n(\Chat_n)\circ \bfD_L$. 
 The exact statement is given below by Corollary \ref{cor:concentration}. The  result  in \cite{BL08} is equivalent to the special case where $a_k\equiv 0$. Fortunately, the generalization is not difficult and is in Section \ref{sec:concen}.

\subsection{Localization inconsistency with localized covariance}
While the localization technique makes the covariance sampling much easier, they also introduce additional errors. The fundamental reason is that the localization techniques are applied to the covariance matrices, but cannot be applied to the ensemble members themselves. On the other hand, the analysis update is applied to the  ensemble but not to the covariance. This leads to a matrix inconsistency \cite{WH02, WHWST08, Ner15}.

To illustrate, we look at the forecast filter error at time $n$, $\ehat_n=\overline{\Xhat}_n-X_n$. At this moment, the sample noise realization of $\mathcal{F}^S_{n}$ is available, so it is natural to  consider the conditional covariance of the forecast filter error :
\[
\E_{\mathcal{F}^S_{n} }\ehat_n \otimes \ehat_n=\E_{S}\ehat_n \otimes \ehat_n. 
\]
The identity holds because the sample noises after time $n$ are independent of $\hat{e}_n\in \mathcal{F}_{n}$. 

Suppose this covariance is captured by the localized ensemble covariance, in other words $\E_{S}\ehat_n \otimes \ehat_n=\Chat_n\circ \bfD_L$. 
Based on the LEnKF formulation \eqref{sys:EnKFloc}, the filter errors after the next assimilation step and forecast step are:
\[
e_{n}=\Xbar_{n}-X_{n}=\overline{\Xhat}_{n}-\Khat_{n}(H\overline{\Xhat}_{n}-H X_{n}-\zeta_{n})-X_{n}=(I-\Khat_{n}H) \ehat_{n}+\Khat_{n}\zeta_{n},
\]
\begin{equation}
\label{tmp:errorupdate}
\ehat_{n+1}= \overline{\Xhat}_{n+1}-X_{n+1}
=A_n (\Xbar_n-X_n)-\xi_{n+1}=A_n(I-\Khat_{n}H) \ehat_{n}+A_n \Khat_n\zeta_n -\xi_{n+1}.
\end{equation}
Since the Kalman gain $\Khat_n\in \mathcal{F}^S_n$, $\zeta_n$ and $\xi_{n+1}$ are independent of $\mathcal{F}^S_\infty$,  the new forecast error covariance  is 
\begin{align}
\notag
\E_{S}  &\ehat_{n+1}\otimes \ehat_{n+1}= A_n[(I-\Khat_{n}H) (\E_S \ehat_{n}\otimes \ehat_{n}) (I-\Khat_{n}H)^T+\sigma_o^2\Khat_{n}\Khat_{n}^T]A_n^T+\Sigma_n,\\
\label{tmp:realerror}
&= A_n[(I-\Khat_{n}H) [\Chat_n\circ \bfD_L] (I-\Khat_{n}H)^T+\sigma_o^2\Khat_{n}\Khat_{n}^T]A_n^T+\Sigma_n=:\Riccati_n'(\Chat_n).
\end{align}
On the other hand, the ensemble covariance is generated by the update in \eqref{tmp:Delta}. With no inflation, $r=1$, Theorem \ref{thm:localconcen} indicates $\Chat_{n+1}\circ \bfD_L$ is near its average 
\begin{equation}
\label{tmp:ensemblerror}
\Riccati_n(\Chat_n)\circ\bfD_L=[A_n [(I-\Khat_{n} H) \Chat_n (I-\Khat_{n} H)^T+\sigma_o^2\Khat_{n}\Khat_n^T]A_n^T+\Sigma_n]\circ \bfD_L. 
\end{equation}
Recall the posterior Riccati map $\Riccati_n(\Chat_n)$ is defined by \eqref{eqn:Riccati}.

The difference between \eqref{tmp:realerror} and \eqref{tmp:ensemblerror} can be interpreted as the inconsistency caused by commuting the localization and Kalman covariance update.  In order for the ensemble covariance to capture the error covariance, it is necessary for this difference to be small. This is an issue not governed by the sampling scheme, but governed by the localization operation.

As discussed in the introduction, the major motivation behind localization techniques is that  the covariance is  localized. We formalize  this notion through the following definition. 
\begin{defn}
\label{defn:localstructure}
Given a decreasing function $\Phi: \reals^+\mapsto [0,1]$ with $\Phi(0)=1$, we say the forecast covariance sequence $\Chat_n$ follows an  $(M_n, \Phi, L)$-localized structure, if 
\begin{equation}
\label{eqn:localstructure}
|[\Chat_n]_{i,j}|\leq \begin{cases} M_n \Phi (\dist (i,j)) \quad & \dist(i,j)\leq L;\\
M_n\Phi(L)\quad & \dist(i,j)>L.
\end{cases}
\end{equation}
\end{defn}
The decay function $\Phi$ and $L$ need not coincide with the $\phi$ and $l$ used in Kalman gain localization \eqref{eqn:local}. This flexibility is useful when we try to verify the localized structure. 
Intuitively, in order for localization techniques to be effective, we  need $\Phi(x)$ to be near zero when $x$ is large. This holds true for most localized covariance structures, such as the Gaspari Cohn matrix \eqref{eqn:GC}, and also the function $\Phi(x)=\lambda_A^{x}$ with a certain $\lambda_A<1$, which will appear below in Theorem \ref{thm:formloc} for linear systems. 

One interesting phenomenon, is that if the forecast covariance is already localized, then the localization inconsistency is in general small:
\begin{prop}
\label{lem:Kalmanlocalizationerror}
Suppose $\|A_n\|\leq M_A$, $A_n$ and $\Sigma_n$ are of bandwidth less than $l$, and $\Chat_n$ follows an $(M_n,\Phi, L)$-localized structure, then the localization inconsistency with $\bfD_L=\Dcut^L$ and $L\geq 4l$, given by
\[
\Delta_{loc}=\eqref{tmp:realerror}-\eqref{tmp:ensemblerror},
\]
has nonzero entries only around the localization boundary:
\[
[\Delta_{loc}]_{i,j}=0\quad \text{if}\quad |\dist(i,j)-L|> 2l.
\]
Moreover, it is bounded by 
\begin{equation}
\label{eqn:localincon}
\|\Delta_{loc}\|\leq  M_n M_A^2 (1+ \sigma_o^{-2} \calB_l M_n)^2 \calB_l^2\calB_{L,l} \Phi(L-2l).
\end{equation}
$\calB_{L,l}$ is a volume constant $\calB_{L,l}=\max_i \#\{j: |\dist(i,j)-L|\leq 2l\}$, and $\calB_l$ is given by \eqref{eqn:calB}.  Note that if $\Phi(L-2l)$ is close to zero, the right side is very small. 
\end{prop}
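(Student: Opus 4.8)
The plan is to first collapse $\Delta_{loc}$ into a single expression that involves only the part of $\Chat_n$ lying outside the localization band, and then to exploit the bandedness of the domain-localized Kalman gain.

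\emph{Step 1: algebraic reduction.} Put \eqref{tmp:realerror} and \eqref{tmp:ensemblerror} side by side. The observation-noise term $\sigma_o^2A_n\Khat_n\Khat_n^TA_n^T$ and the system-noise term $\Sigma_n$ appear in both — unmodified in \eqref{tmp:realerror}, and Schur-multiplied by $\Dcut^L$ in \eqref{tmp:ensemblerror}. Since $\Sigma_n$ has bandwidth $<l$ and, by Step 2, $\Khat_n\Khat_n^T$ has bandwidth $\le 2l$, the matrix $\sigma_o^2A_n\Khat_n\Khat_n^TA_n^T$ has bandwidth $<4l\le L$; hence both are fixed by $\circ\,\Dcut^L$ and cancel. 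Writing $W_n:=A_n(I-\Khat_nH)$ and $R_n:=\Chat_n-\Chat_n\circ\Dcut^L$ (supported on $\{(i,j):\dist(i,j)>L\}$), what remains is
\[
\Delta_{loc}=W_n(\Chat_n\circ\Dcut^L)W_n^T-(W_n\Chat_nW_n^T)\circ\Dcut^L=\big(W_n(\Chat_n\circ\Dcut^L)W_n^T\big)\circ(\mathbf{1}-\Dcut^L)-\big(W_nR_nW_n^T\big)\circ\Dcut^L,
\]
with $\mathbf{1}$ the all-ones matrix; both summands are symmetric.

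\emph{Step 2: bandedness of $W_n$.} From \eqref{eqn:Ki}--\eqref{eqn:Khat1}, $K^i_n=\Chat^i_nH^T(\sigma_o^2I_q+H\Chat^i_nH^T)^{-1}$ with $\Chat^i_n=\bfP_{\calI_i}\Chat_n\bfP_{\calI_i}$ supported on $\calI_i\times\calI_i$. Splitting the observation indices into those inside $\calI_i$ and those outside, $\sigma_o^2I_q+H\Chat^i_nH^T$ is block diagonal (it equals $\sigma_o^2I$ on the outside block), hence so is its inverse; therefore the $i$-th row of $K^i_n$, which is the $i$-th row of $\Khat_n$, is supported on observation sites within distance $l$ of $i$. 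This gives $\mathrm{bw}(\Khat_nH)\le l$ (and $\mathrm{bw}(\Khat_n\Khat_n^T)\le 2l$, as used in Step 1), so $W_n$ inherits a bandwidth of order $l$ from $A_n$ and $\Khat_nH$.

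\emph{Step 3: support of $\Delta_{loc}$.} Propagate these bandwidths through the two summands. In the first, $W_n(\Chat_n\circ\Dcut^L)W_n^T$ is banded while $\circ(\mathbf{1}-\Dcut^L)$ forces $\dist(i,j)>L$; in the second, a nonzero $(i,j)$-entry needs sites $a,b$ with $\dist(i,a),\dist(j,b)$ within $\mathrm{bw}(W_n)$ and $\dist(a,b)>L$, forcing $\dist(i,j)>L-2\,\mathrm{bw}(W_n)$, while $\circ\,\Dcut^L$ forces $\dist(i,j)\le L$. Combining, $[\Delta_{loc}]_{ij}=0$ whenever $|\dist(i,j)-L|>2l$, which is the first assertion; in particular each row of $\Delta_{loc}$ has at most $\calB_{L,l}$ nonzero entries.

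\emph{Step 4: norm bound, and the main obstacle.} Since $\Delta_{loc}$ is symmetric with at most $\calB_{L,l}$ nonzeros per row, $\|\Delta_{loc}\|\le\|\Delta_{loc}\|_1\le\calB_{L,l}\,\|\Delta_{loc}\|_\infty$, so it suffices to bound a single entry. For $(i,j)$ in the band, either $\dist(i,j)>L$ and $[\Delta_{loc}]_{ij}=(W_n^T\bfe_i)^T(\Chat_n\circ\Dcut^L)(W_n^T\bfe_j)$, or $\dist(i,j)\le L$ and $[\Delta_{loc}]_{ij}=-(W_n^T\bfe_i)^TR_n(W_n^T\bfe_j)$. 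As $W_n^T\bfe_i$ and $W_n^T\bfe_j$ are supported in balls of radius $\mathrm{bw}(W_n)$ about $i,j$, the contributing entries $[\Chat_n]_{ab}$ obey $\dist(a,b)\ge L-2l$ (in the first case because $L\ge\dist(a,b)>\dist(i,j)-2\,\mathrm{bw}(W_n)>L-2l$, in the second because $\dist(a,b)>L$), so the $(M_n,\Phi,L)$-localized structure bounds each by $M_n\Phi(L-2l)$. Hence $|[\Delta_{loc}]_{ij}|\le M_n\Phi(L-2l)\,\|W_n^T\bfe_i\|_1\,\|W_n^T\bfe_j\|_1$, and $\|W_n^T\bfe_i\|_1$ is controlled using $\|A_n\|\le M_A$, $\mathrm{bw}(A_n)<l$, the count of $\le\calB_l$ indices in each ball, and the bound $\|K^i_n\|\le\sigma_o^{-2}\|\Chat^i_n\|\le\sigma_o^{-2}\,\mathrm{tr}\,\Chat^i_n\le\sigma_o^{-2}\calB_lM_n$ (from $\sigma_o^2I_q+H\Chat^i_nH^T\succeq\sigma_o^2I_q$ and $[\Chat_n]_{aa}\le M_n$), giving $\|W_n^T\bfe_i\|_1\le M_A\calB_l(1+\sigma_o^{-2}\calB_lM_n)$; together with the $\calB_{L,l}$ row count this yields \eqref{eqn:localincon}. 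The delicate point is precisely this constant bookkeeping: one must simultaneously track how far each $W_n$ factor and the cutoff $\Dcut^L$ displace indices so that the argument of $\Phi$ comes out exactly $L-2l$ and the powers of $\calB_l$ stay minimal, and one must avoid a $d$-dependent bound on $\|\Khat_n\|$ by using the projection form $\Khat_n=\sum_i\bfe_i\bfe_i^TK^i_n$ together with the block structure of $(\sigma_o^2I_q+H\Chat^i_nH^T)^{-1}$ from Step 2.
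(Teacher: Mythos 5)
Your proposal is correct and is essentially the paper's own argument: you cancel the noise terms because $\sigma_o^2A_n\Khat_n\Khat_n^T A_n^T+\Sigma_n$ has bandwidth at most $4l\le L$, obtain the bandwidth of $\Khat_n H$ from the block form of $(\sigma_o^2I_q+H\Chat^i_nH^T)^{-1}$ exactly as in \eqref{tmp:KHentry}, control the remaining commutator of $\circ\,\Dcut^L$ with conjugation by $A_n(I-\Khat_nH)$ entrywise via the $(M_n,\Phi,L)$ structure and the bound $\|K^i_n\|\le\sigma_o^{-2}\calB_l M_n$ (the content of Lemma \ref{lem:localizationerror}), and finish with $\|\Delta_{loc}\|\le\|\Delta_{loc}\|_1\le\calB_{L,l}\|\Delta_{loc}\|_\infty$. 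The one loose point in your write-up, treating $A_n(I-\Khat_nH)$ as displacing indices by at most $l$ although its bandwidth can be as large as $2l$ (which, strictly, would give the band $|\dist(i,j)-L|\le 4l$ and the factor $\Phi(L-4l)$), is exactly the accounting the paper itself uses when applying Lemma \ref{lem:localizationerror}, so it introduces no gap beyond the paper's own.
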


%
%
%
\subsection{Main result: LEnKF performance}
There are different ways to quantify the performance of EnKF. 
One approach is to compare EnKF with its large ensemble limit, 
which is the Kalman filter, and estimate the convergence rate \cite{LeG11, mandel2011convergence, KM15, LTT16}. 
Moreover, advanced sampling techniques, such as multilevel Monte Carlo, can be applied to the EnKF procedures, 
and speed up the convergence \cite{HLT16, CHLT16}.
However, these results have not investigated the dependence of  sample size $K$ on the underlying dimension,
thus they are not helpful in explaining the advantages of the localization procedures.
Moreover, the large ensemble limit for LEnKF is not necessarily the optimal, since the localization techniques may violate the Bayes' formula. 

A more practical approach looks for qualitative EnKF properties, where the necessary sample size $K$ scales with quantities much less than $d$ \cite{KLS14, TMK15non,TMK15, KMT15}, for example a low effective dimension \cite{MT17cpam}.  One central issue of EnKF is that, unlike Kalman filter, 
it estimates the forecast uncertainty by the ensemble covariance, which can be faulty.
Since the forecast covariance matrix plays a pivotal role in the EnKF operation, 
it is important to ask if the ensemble covariance captures the real filter error covariance. 

In our particular case, we are interested in finding a bound for filter error covariance $\E_S \ehat_n \otimes \ehat_n$.  
We will compare it with the filter ensemble covariance $\Chat_{n}$.  
Note that the conditioning $\E_S$ is with respect to the sample noise filtration $\mathcal{F}^S_\infty$ given in \eqref{eqn:filterationS}, 
moreover note that $\Chat_{n}\in \mathcal{F}_\infty^S$. 
Therefore the comparison is legitimate. 
By showing $\E_{S} \ehat_n \otimes \ehat_n$ is dominated by a proper inflation of $\Chat_n$ with large probability, 
we demonstrate that the LEnKF reaches its estimated performance. 
In order to achieve that, we need  the localized structure to be stable as well.

\begin{thm}
\label{thm:main}
Suppose the forecast ensemble covariance follows a stable $(M_n,\Phi,L)$-localized structure, and the sample size $K$ exceeds $D_L \log d$ with  a constant $D_L$ that depends on $L$, the LEnKF \eqref{sys:EnKFloc} reaches its estimated performance in the long time average. In specific, for any $\delta>0$, suppose the following conditions hold 
\begin{enumerate}[1)]
\item In the signal-observation system \eqref{sys:signalobs},  $A_n$ and $\Sigma_n$ are of  bandwidth $l$, moreover 
\[
\|A_n\|\leq M_A,\quad m_\Sigma I_d\preceq \Sigma_n\preceq M_\Sigma I_d,\quad M^2_A\geq m_\Sigma.
\] 
\item Suppose the initial error satisfies  $\E_S \ehat_0 \otimes \ehat_0\preceq r_0(\Chat_0+\rho I_d)$ for some  $r_0$ and $\rho$ that
\[
0<r_0,\quad 0<\rho< (\tfrac12-\tfrac{1}{2r})\min\{M_A^2/m_\Sigma, \sigma_o^2\}.
\]
This can always be achieved by picking a larger $r_0$. 
\item The forecast covariance $\Chat_n$ follows a $(M_n,\Phi,L)$-localized structure as in Definition \ref{defn:localstructure}. Moreover, the localized structure is stable, so there are constants $B_0,D_0$ and $M_0$ so that 
\begin{equation}
\label{eqn:stabloc}
\frac{1}{T}\E \sum_{n=1}^T M_n\leq \frac{1}{T}(B_0 \E \|\Chat_0\|+D_0) + M_0. 
\end{equation}
\item The localized structure $\Phi$ and radius $L$ satisfy 
\[
L\geq 4l,\quad \Phi(L-2l)\leq \delta^3 \calB_{L,l}^{-1}M^{-2}_A \calB_l^{-6}.
\]
The volume constants are given by Proposition \ref{lem:Kalmanlocalizationerror}.
\item The sample size $K>\Gamma(r\calB_l\delta^{-1},d)$, with
\begin{equation}
\label{eqn:Gamma}
\Gamma(x,d)=\max\{9x^2, \tfrac{24 x}{ c}, \tfrac{18 x^2}{c} \log d\},
\end{equation}
and the absolute constant $c$ is given by Theorem \ref{thm:localconcen}. 
\end{enumerate}
Then for any $1<r_*<r$, the filter error covariance is dominated by the filter covariance 
\[
\E_{S} \ehat_n \otimes \ehat_n\preceq r_*(\Chat_n\circ \Dcut^L+\rho I_d)
\]  
with high $1-O(\delta)$ probability  in long time average
\begin{align*}
1-\frac{1}{T}\sum_{n=0}^{T-1} &\Prob(\E_{S} \ehat_n \otimes \ehat_n\preceq r_*( \Chat_n\circ \Dcut^L+\rho I_d))\\
&\leq \frac{r_0}{T \log r_*}+\frac{\delta (B_0\|\Chat_0\|+D_0)}{T \log r_*}(\rho^{-1} \calB_l^2 M_A^2+\tfrac{2r^{1/3}}{(\rho\sigma_o)^{1/3}})\\
&\quad +\frac{\delta}{\log r_*}\left((\rho^{-1} \calB_l^2 M_A^2+\tfrac{2r^{1/3}}{(\rho\sigma_o)^{1/3}})M_0+\rho^{-1} M_\Sigma+2\tfrac{r^{1/3}}{\rho^{1/3}}\sigma^{2/3}_o  \right).
\end{align*}
\end{thm}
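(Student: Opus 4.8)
The plan is to set up a recursive inequality relating the conditional forecast error covariance $\E_S \ehat_{n+1}\otimes\ehat_{n+1}$ to a properly inflated ensemble covariance, and then to turn the failure of the domination event into a supermartingale-type quantity whose expected increments are controlled by the localized structure and the sampling error. The guiding identity is \eqref{tmp:realerror}: on the event that $\E_S\ehat_n\otimes\ehat_n\preceq r_*(\Chat_n\circ\Dcut^L+\rho I_d)$, one propagates through the Kalman update and the forecast step to obtain $\E_S\ehat_{n+1}\otimes\ehat_{n+1}\preceq \Riccati_n'(r_*(\Chat_n\circ\Dcut^L+\rho I_d))$ (using monotonicity of the Riccati-type map $\Riccati_n'$ in its covariance argument), and then one must compare this with $r_*(\Chat_{n+1}\circ\Dcut^L+\rho I_d)$. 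Writing the difference as three pieces — (i) the localization inconsistency $\Delta_{loc}$ bounded by Proposition \ref{lem:Kalmanlocalizationerror}, (ii) the sampling error $(\Chat_{n+1}-r\Riccati_n(\Chat_n))\circ\Dcut^L$ controlled by Corollary \ref{cor:concentration} / Theorem \ref{thm:localconcen}, and (iii) a deterministic gain from the inflation gap $r$ versus $r_*$ together with the extra $\rho I_d$ cushion — the aim is to show the sum of (i) and (ii) is absorbed by (iii) with high probability, so that the domination propagates from $n$ to $n+1$.

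The key steps, in order, are: first, establish the one-step deterministic contraction — show that $\Riccati_n'(r_*(C+\rho I_d))\preceq r_*(\Riccati_n(C)+\rho I_d)$ whenever $r_*<r$ and $\rho$ is small enough; this uses $A_n(I-\Khat_n H)\Chat_n(I-\Khat_n H)^TA_n^T+\sigma_o^2 A_n\Khat_n\Khat_n^TA_n^T\succeq 0$ together with $\Sigma_n\succeq m_\Sigma I_d$ and the bound $\|\Khat_n H\|\le$ something in terms of $M_A$ and $\sigma_o$, and this is where condition 2) on $\rho$ enters. Second, replace $\Riccati_n(C)$ by $\Chat_{n+1}$ up to the Schur-localized sampling error, choosing $K>\Gamma(r\calB_l\delta^{-1},d)$ so that by Theorem \ref{thm:localconcen}a) the event $\|(\Chat_{n+1}-r\Riccati_n(\Chat_n))\circ\Dcut^L\|\le \delta$-type quantity holds with probability $1-O(\delta/T)$-controllable; here $\|\Dcut^L\|_1\le\calB_L$ and the $\sigma_{a,z}$ factor is bounded in terms of $M_n$, $M_A$, $\sigma_o$, $M_\Sigma$. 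Third, feed in Proposition \ref{lem:Kalmanlocalizationerror} with condition 4) ($\Phi(L-2l)$ super-small) to make $\|\Delta_{loc}\|$ negligible against $\rho(r-r_*)$. Fourth — the part that produces the long-time-average statement rather than a pointwise one — introduce the "likelihood-ratio" or telescoping functional: roughly, let $\tau$ be the first time domination fails, and track $\E\log(\text{something}/r_*)$ or directly sum the one-step failure probabilities; the inflation gap gives a multiplicative drift $\log r_*$ per good step, while each bad step costs at most a bounded amount times $M_n$ (through the $\calB_l^2 M_A^2$ and $(\rho\sigma_o)^{-1/3}$ factors visible in the final bound), and then the stability assumption \eqref{eqn:stabloc} converts $\frac1T\sum\E M_n$ into the stated $O(1/T)+O(\delta)$ bound. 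The appearance of the cube-root terms $2r^{1/3}(\rho\sigma_o)^{-1/3}$ strongly suggests an intermediate optimization (an AM-GM split of a product like $t\cdot M_n$ against $t^{-2}$ from the concentration exponent), so I would expect to tune the threshold $t$ in Theorem \ref{thm:localconcen} as a function of $M_n$ at each step.

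The main obstacle I anticipate is the fourth step: making the passage from a one-step "with high probability, domination propagates" statement to the clean long-time-average bound, while correctly bookkeeping the randomness. The subtlety is that $\Chat_n$ and $M_n$ are themselves random (only $\mathcal{F}^S_\infty$-measurable), the domination event at time $n$ is a statement about $\E_S\ehat_n\otimes\ehat_n$ which lives "outside" $\mathcal{F}^S_n$, and the bad steps are not independent — so one cannot simply union-bound. The right device is presumably to build an auxiliary process that dominates $\E_S\ehat_n\otimes\ehat_n$ deterministically whenever the per-step concentration events hold, run the inflation-gap drift on a scalar surrogate (e.g. the smallest $r_*$ for which domination by $r_*(\Chat_n\circ\Dcut^L+\rho I_d)$ holds, or $\log$ of it), and show this surrogate is a supermartingale with a controlled compensator; an optional-stopping / Markov-inequality argument then yields the time-averaged probability bound. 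Handling the initial condition through $\E_S\ehat_0\otimes\ehat_0\preceq r_0(\Chat_0+\rho I_d)$ contributes the $r_0/(T\log r_*)$ term, and getting the constants $B_0,D_0,M_0$ to flow through the Riccati recursion untouched is a careful but routine consequence of \eqref{eqn:stabloc}.
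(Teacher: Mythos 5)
Your plan follows essentially the same route as the paper: the paper's proof defines $r_n$ as the smallest factor with $\E_S\,\ehat_n\otimes\ehat_n\preceq r_n(\Chat_n\circ\Dcut^L+\rho I_d)$, derives the pathwise one-step recursion $r_{n+1}\le\max\{1,r_n/r\}\bigl(1+\tfrac r\rho(\mu_{n+1}+\nu_{n+1})\bigr)$ with $\mu_{n+1}$ the sampling error and $\nu_{n+1}$ the localization inconsistency from Proposition \ref{lem:Kalmanlocalizationerror}, then telescopes $\log r_n$ and invokes \eqref{eqn:stabloc} — precisely your scalar-surrogate drift argument. The only cosmetic differences are that the sampling error is absorbed in conditional expectation via Corollary \ref{cor:concentration} and Lemma \ref{lem:intbypart} rather than through per-step high-probability events, and the cube-root terms arise from applying $\log(1+x+y^3)\le x+2y$ to $\nu_{n+1}$ together with the $\delta^3$ in condition 4), not from tuning the concentration threshold.
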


\subsection{Weak  local interaction with sparse observations}
By Theorem \ref{thm:main}, the stability of localized structure is a necessary condition for  the LEnKF to reach its estimated performance. While in practice this condition is often assumed to be true to motivate the localization technique, and one can check it while the algorithm is running, it is interesting to find some sufficient a-priori conditions of system \eqref{sys:signalobs}, so that \eqref{eqn:stabloc} holds. Unfortunately,  rigorous investigations in this direction is sorely missing. Here we provide a stability analysis in a simple setting. 

The origin of localized covariances is intuitively clear.  In most physical systems, the covariance between  $[X]_i$ and $[X]_j$ comes from information propagation in space. So if the propagation is weak and decays at the same time, there will be a localized covariance. For our linear models, the information propagation is carried by local interactions, described by the off diagonal terms of $A_n$. To enforce its weakness, we assume that there is 
a $\lambda_A<1$, such that  
\begin{equation}
\label{aspt:weakinter}
\max_{i}\left\{\sum_{k=1}^d |[A_n]_{i,k}|\lambda_A^{-\dist(i,k)}\right\} \leq \lambda_A. 
\end{equation}
For the simplicity of our discussion, we also assume the system noise is diagonal  $\Sigma_n=\sigma^2_\xi I_d$.

Note that $\lambda_A<1$, so $\lambda_A^{-\dist(i,k)}$ is a large number when $i$ and $k$ are fart apart. So condition \eqref{aspt:weakinter} constraints the long distance interaction, measured by $|[A_n]_{i,k}|$, to be weak. In other words, \eqref{aspt:weakinter} models a local interaction.  If we concern the unfilter covariance of the sequence $[X]_i$, then $\lambda_A<1$ is sufficient to guarantee the covariance is localized, using Proposition \ref{prop:localforecast} in below.

The main difficulty actually comes from the observation part. For simplicity, we require the observations in \eqref{sys:observation} to be sparse in the sense that $\dist(o_i, o_j)>2l$ for any $i\neq j$. Recall that $o_i$ is the $i$-th observable component. Then for each location $i\in \{1,\cdots, d\}$, there is at most one location $o(i)\in\{o_1,\cdots, o_q\}$ such that $\dist(i,o(i))\leq l$. This will significantly simplify the analysis step and yield an explicit expression. Sparse observations are in fact quite common in practice. Moreover, it is also possible to generalize the results here to non-sparse scenario, by using sequential assimilation \cite{Ner15}. But the conditions will be much more involved.

Under the sparse observation scenario, the following function describes how does the localized structure of $\Chat_n$ update to the one of $\Chat_{n+1}$:
\begin{equation}
\label{eqn:psi}
\psi_{\lambda_A}(M,\delta)=(r+\delta)\max\left\{\lambda_A M\left(1+\sigma_o^{-2}M\right)^2+\lambda_A \sigma_o^{-2}M^2, \lambda^2_A M+\sigma_\xi^2\right\}.
\end{equation}
This  function provides a way to ensure stable localized structure:
\begin{thm}
\label{thm:formloc}
Given a LEnKF \eqref{sys:EnKFloc}, suppose the following holds 
\begin{enumerate}[1)]
\item The system noise is diagonal  and the observations are sparse
\[
\Sigma_n=\sigma_\xi^2 I_d,\quad \dist(o_i, o_j)>2l,\,\,\,\forall i\neq j. 
 \]
\item There is a $\lambda_A<r^{-1}$ such that \eqref{aspt:weakinter} holds.
\item There are constants
\[
 0<\delta_*<\min\{0.25, \tfrac12(\lambda_A^{-1}-r)\},\quad M_*\geq \frac{(r+2\delta_*)\sigma_\xi^2}{1-\lambda_A},
\]
 such that $\psi_{\lambda_A}(M_*, \delta_*)\leq M_*$ with $\psi_{\lambda_A}$ given by \eqref{eqn:psi}. 
\item Denote $n_*=2L+\lceil\frac{\log 4\delta_*^{-1}}{\log \lambda^{-1}_A}\rceil$.  The sample size $K$ exceeds 
\begin{equation}
\label{tmp:sampleK}
K>\max\left\{-\frac{1}{c\delta_*^2 \lambda_A^{2L}}\log (16d^2 n_* \delta_*^{-2}), \Gamma(2r\delta_*^{-1},d) \right\}.
\end{equation}
\end{enumerate}
Then the forecast ensemble covariance follows a stable localized structure $(M_n, \Phi, L)$ with $\Phi(x)=\lambda_A^{x}$. In specific, the stochastic sequence $M_n$ is dissipative every $n_*$ steps:
\[
\E_0 M_{n_*}\leq \frac{1}{2} M_{0}+ (1+2\delta_*)M_* \,.
\]
The long time average condition \eqref{eqn:stabloc} can be verified by 
\[
\frac{1}{T}\sum_{k=1}^T\E  M_{k}\leq \frac{2n_* }{T\lambda_A^{L}}(\E \|\Chat_0\|+M_*)+2(1+\delta_*)M_*.
\]
\end{thm}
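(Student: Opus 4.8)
The plan is to isolate a \emph{one-step propagation lemma} for the localized structure and then iterate it --- pathwise through the scalar recursion generated by $\psi_{\lambda_A}$, and probabilistically through a union bound over the per-step sampling events whose sizes are governed by the sample-size requirement \eqref{tmp:sampleK}.

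\textbf{Step 1 (one-step propagation --- the crux).} I would first prove: if $\Chat_n$ follows an $(M_n,\Phi,L)$-localized structure with $\Phi(x)=\lambda_A^{x}$, then on an event in $\mathcal{F}^S_{n+1}$ of conditional probability at least $1-8\exp(2\log d-cK\delta_*^2\lambda_A^{2L})$, the matrix $\Chat_{n+1}$ again follows an $(M_{n+1},\Phi,L)$-localized structure with $M_{n+1}\le\psi_{\lambda_A}(M_n,\delta_*)$. This splits into a deterministic part and a sampling part. For the deterministic part I estimate the inflated target $r\Riccati_n(\Chat_n)$ of \eqref{eqn:Riccati} entry by entry. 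The sparsity hypothesis $\dist(o_i,o_j)>2l$ forces each ball $\calI_i$ of radius $l$ to contain at most one observed site $o(i)$, so the local gain $K^i_n$ of \eqref{eqn:Ki} is governed by the single scalar $\sigma_o^2+[\Chat_n]_{o(i),o(i)}\ge\sigma_o^2$, and the $i$-th row of the patched gain \eqref{eqn:Khat1} is supported on the column of $o(i)$ with magnitude at most $\sigma_o^{-2}M_n\lambda_A^{\dist(i,o(i))}$. Expanding $(I-\Khat_nH)\Chat_n(I-\Khat_nH)^T+\sigma_o^2\Khat_n\Khat_n^T$ into its (at most) four terms and estimating each along the path $i\to o(i)\to o(j)\to j$, using $\lambda_A^a\lambda_A^b\le\lambda_A^{a+b}$ and carefully retaining the floor $\lambda_A^L$ whenever a subpath leaves radius $L$, shows this matrix follows a localized structure with magnitude controlled by $M_n(1+\sigma_o^{-2}M_n)^2+\sigma_o^{-2}M_n^2$ near an observation and by $M_n$ away from every observation; conjugating by $A_n$ contracts the decay rate by a factor $\lambda_A^2$, which is precisely what the weak-interaction condition \eqref{aspt:weakinter} buys (this noise-only step is Proposition \ref{prop:localforecast}), and adding $\Sigma_n=\sigma_\xi^2 I_d$ only touches the diagonal. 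The ``near / far from an observation'' dichotomy is exactly what produces the two branches of the maximum in \eqref{eqn:psi} and yields $\psi_{\lambda_A}(M_n,0)$. For the sampling part I apply Theorem \ref{thm:localconcen}(b) with $a_k=\sqrt r A_n(I-\Khat_nH)\Delta\Xhat^{(k)}_n$ and $z_k=\sqrt r A_n\Khat_n\zeta^{(k)}_n+\sqrt r\xi^{(k)}_{n+1}$; since $\sigma_{a,z}$ is at most a fixed multiple (in $r,M_A,\calB_l,\sigma_o,\lambda_A$) of $M_{n+1}$, taking $t$ of order $\delta_*\lambda_A^{L}$ gives $\|\Chat_{n+1}-r\Riccati_n(\Chat_n)\circ\Dcut^L\|_\infty\le\delta_*M_{n+1}\lambda_A^{L}$ on the good event. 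Because $\Dcut^L$ of \eqref{eqn:Dcut} annihilates the target entries at distance $>L$, this uniform bound is exactly what is needed both there --- where it must beat the floor $M_{n+1}\lambda_A^{L}$ --- and at distances $\le L$, where $\lambda_A^{\dist(i,j)}\ge\lambda_A^{L}$; the extra $\delta_*$ is absorbed into the $(r+\delta_*)$ prefactor of $\psi_{\lambda_A}$. The first bound in \eqref{tmp:sampleK} is precisely ``$cKt^2$ exceeds $2\log d$ plus the union-bound budget $\log(n_*/\delta_*^2)$'' for $t\sim\delta_*\lambda_A^{L}$, while $K>\Gamma(2r\delta_*^{-1},d)$ with $\Gamma$ from \eqref{eqn:Gamma} simultaneously controls $\|\Chat_{n+1}-r\Riccati_n(\Chat_n)\circ\Dcut^L\|$ in operator norm and, via concentration of $\Chat_{n+1}$ around $r\Riccati_n(\Chat_n)\succeq r\sigma_\xi^2 I_d$, keeps the diagonal entries $[\Chat_{n+1}]_{o,o}$ bounded below --- both of which are needed to keep $\|\Chat_{n+1}\|$, hence $M_{n+1}$, under control and to justify the Schur-complement bounds above.

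\textbf{Step 2 (scalar recursion and the $n_*$-step dissipation).} Under hypotheses 2)--3), $M\mapsto\psi_{\lambda_A}(M,\delta_*)$ is increasing with multiplier $(r+\delta_*)\lambda_A<1$ at the origin (using $\lambda_A<r^{-1}$ and $\delta_*<\tfrac12(\lambda_A^{-1}-r)$), has $\psi_{\lambda_A}(M_*,\delta_*)\le M_*$ with $M_*\ge\frac{(r+2\delta_*)\sigma_\xi^2}{1-\lambda_A}$ dominating the fixed point of the affine branch, and therefore leaves $[0,M_*]$ invariant. I would show that iterating $\psi_{\lambda_A}(\cdot,\delta_*)$ for $\lceil\log(4\delta_*^{-1})/\log\lambda_A^{-1}\rceil$ steps contracts the excess over $(1+2\delta_*)M_*$ by at least a half. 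Composing the one-step lemma along $n_*=2L+\lceil\log(4\delta_*^{-1})/\log\lambda_A^{-1}\rceil$ consecutive steps, taking a union bound over the $n_*$ good events (each failing with probability at most $\delta_*^2/(8n_*)$ by Step 1 and \eqref{tmp:sampleK}), and bounding $M_{n_*}$ on the complementary event by the deterministic control from Step 1, yields the conditional dissipation $\E_0 M_{n_*}\le\tfrac12 M_0+(1+2\delta_*)M_*$. The buffer $2L$ inside $n_*$ is a comfortable over-estimate of the spatial range over which a generic initial covariance settles into a radius-$L$ localized profile and over which the union-bound probabilities remain summable.

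\textbf{Step 3 (long-time average) and the main obstacle.} Applying the $n_*$-step dissipation repeatedly over consecutive blocks, taking total expectations, and converting the elementary bound $|[\Chat_0]_{i,j}|\le\|\Chat_0\|$ into the localized magnitude $M_0\le\lambda_A^{-L}\|\Chat_0\|$ --- the source of the $\lambda_A^{-L}$ prefactor and the $\|\Chat_0\|$-dependence --- a geometric-series telescoping gives $\frac1T\sum_{k=1}^T\E M_k\le\frac{2n_*}{T\lambda_A^{L}}(\E\|\Chat_0\|+M_*)+2(1+\delta_*)M_*$, which is of the form required by \eqref{eqn:stabloc}. The heart of the whole argument is Step 1: pushing the entry-wise Kalman estimate through so that (i) it collapses to precisely the two-branch form \eqref{eqn:psi}, (ii) the floor $\lambda_A^{L}$ survives the matrix products without a $\lambda_A^{-O(1)}$ loss, and (iii) the bound does not blow up --- which forces one to exploit the positive-semidefinite Schur-complement structure of the localized Kalman update, together with the lower bound on $[\Chat_n]_{o,o}$, rather than crude triangle inequalities. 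The scalar-recursion analysis and the block telescoping are comparatively routine, as is the handling of the rare sampling-failure events once the operator-norm estimate of Step 1 is in hand.
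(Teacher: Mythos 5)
Your Step 1 is essentially sound and matches the paper's Propositions \ref{prop:localforecast}--\ref{prop:Kalman} (on the good sampling event one indeed gets $\Mhat_{n+1,L}\leq \psi_{\lambda_A}(\Mhat_{n,L-1},\delta_{n+1})\leq\psi_{\lambda_A}(M_n,\delta_*)$ by monotonicity), but Step 2 contains a genuine gap that breaks the dissipation estimate. The map $M\mapsto \psi_{\lambda_A}(M,\delta_*)$ is \emph{cubic} in $M$ (its first branch behaves like $(r+\delta_*)\lambda_A\sigma_o^{-4}M^3$ for large $M$); it leaves $[0,M_*]$ invariant, but it is not a contraction outside that interval, and iterating it from a large $M_0$ diverges rather than ``contracting the excess over $(1+2\delta_*)M_*$ by a half.'' Since the theorem must deliver $\E_0 M_{n_*}\leq\frac12 M_0+(1+2\delta_*)M_*$ for \emph{arbitrary} initial $M_0$ (the long-time bound only controls $M_0$ by $\lambda_A^{-L}\|\Chat_0\|$), your scalar recursion at fixed radius $L$ cannot produce it. The paper's way around this is precisely what your plan omits: the diagonal obeys an \emph{affine} recursion $\E_n\Mhat_{n+1,0}\leq\lambda_A\Mhat_{n,0}+(r+\delta)\sigma_\xi^2$ regardless of the size of $M$, because the assimilation step never increases diagonal entries (Proposition \ref{prop:ivar}) and the forecast step contracts them (Proposition \ref{prop:localforecast}, Lemma \ref{lem:diagonal}). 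One then splits into cases: if $\Mhat_{0,L}$ is large, the crude bound $\Mhat_{n,L}\leq\lambda_A^{-L}\Mhat_{n,0}$ plus the affine diagonal contraction over $n_*$ steps already gives $\E_0\Mhat_{n_*,L}\leq\frac12\Mhat_{0,L}$; if $\Mhat_{0,L}$ is moderate, one first drives the diagonal below $M_*$ over $n_*-L$ steps and only then invokes the radius-growth inequality $\Mhat_{n+1,k+1}\leq\psi_{\lambda_A}(\Mhat_{n,k},\delta_*)$ to rebuild the radius-$L$ profile outward one index per step over the last $L$ steps, staying inside the invariant region where $\psi_{\lambda_A}$ is harmless. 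This two-phase structure, not a ``comfortable over-estimate,'' is the actual reason $n_*$ contains the additive $2L$.

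A second, smaller gap: you propose to bound $M_{n_*}$ on the complement of the good event ``by the deterministic control from Step 1,'' but on that event the concentration bound has failed and $\Chat_{n+1}$ has no almost-sure deterministic bound (the sampling noise is Gaussian). The paper instead pays for the bad event with Cauchy--Schwarz, $\E_0 \Mhat_{n_*,L}\unit_{\mathcal{U}^c}\leq\sqrt{\Prob_0(\mathcal{U}^c)}\sqrt{\E_0\Mhat^2_{n_*,L}}$, which requires the second-moment version of the diagonal recursion (Lemma \ref{lem:diagonal}) and is where the second argument of $\Gamma(2r\delta_*^{-1},d)$ in \eqref{tmp:sampleK} is really used. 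Your Step 3 (block telescoping and the $\lambda_A^{-L}\|\Chat_0\|$ initialization) is fine once the corrected $n_*$-step dissipation is in hand.
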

\begin{rem}
Note that 
\[
\psi_{\lambda_A}(M,0)=\max\{r\lambda_A M(1+\sigma_o^{-2}M)^2+r\lambda_A\sigma_o^{-2}M^2,r\lambda_A^2 M+r\sigma_\xi^2\},
\]
With sufficiently small $\lambda_A$ or $\sigma^{-1}_o$, $\psi_{\lambda_A}(M,\delta)<M$ can have a solution, so condition $3)$ holds.
\end{rem}
\subsection{Localization radius}
\label{sec:radius}
One important and difficult issue of LEnKF implementation is how to choose the localization radius $l$. The theoretical results above shed some light over this issue qualitatively. It is worth noticing that this paper has two localization radii. $l$ is the one used for LEnKF\eqref{sys:EnKFloc} formulation, and $L$ is used for the filter error theoretical analysis. But generally speaking $L$ and $l$ should be picked so that $L\geq 4l$, so we concern only of $L$ in the following. We also assume that $\Phi(x)=\lambda_A^x$ from Theorem \ref{thm:formloc} for simpler discussion. 

A smaller localization radius simplify the sampling task by focusing  on a smaller assimilation domain, and significantly reduces the necessary sample size. This comes from two perspectives. First, in order for the LEnKF to sample the correct localized covariance matrix, condition 5) of Theorem \ref{thm:main} requires the sample size to grow polynomially with $L$, since $\|\Phi\|_1$ is summing over $\calB_L$ entries. Second, the localized covariance structure can be very delicate at the boundary, and to maintain it one needs the random forecast covariance to have sampling error of scale $\lambda_A^{L}$. This leads to the exponential dependence of $K$ on $L$, as in condition 4) of  Theorem \ref{thm:formloc}. 

On the other hand, a larger localization radius $L$ reduces the size of the localization inconsistency. Based on Proposition \ref{lem:Kalmanlocalizationerror}, the localization inconsistency is of order $\Phi(L-2l)=\lambda_A^{L-2l}$, because within inequality \eqref{eqn:localincon}, $\calB_l$ is independent of $L$, and $\calB_{L,l}$ is also independent of $L$ if $i,j$ are taken from $\{1,\cdots,d\}$. This becomes condition 4) of Theorem \ref{thm:main}, where we need the localization radius to be large, so the inconsistency is bounded by the tolerance.

\subsection{LEnKF accuracy with  small noises}
In practice, with frequent and accurate observations, the system noises, $\Sigma_n$ and $\sigma_o^2$, are often of scale $\epsilon$. In this scenario, the LEnKF has its error covariance scale with $\epsilon$ in long time, showing an accurate forecast skill. Moreover, there is no requirement that the initial ensemble to have error of scale $\epsilon$, meaning the LEnKF can converge to the signal $X_n$ given enough time. 
\begin{thm}
\label{thm:accuracy}
Suppose, the signal-observation system \eqref{sys:signalobs} satisfies the conditions of Theorem \ref{thm:formloc}, and its LEnKF is tuned to satisfy the conditions of Theorem \ref{thm:main} except \eqref{eqn:stabloc}. Then if the same LEnKF is applied to the following system
\[
\begin{gathered}
 X^\epsilon_{n+1}=A_n X^\epsilon_n+b_n +\xi_n,\quad \xi_{n+1}\sim \mathcal{N}(0, \epsilon\sigma_\xi^2 I_d),\\
 Y^\epsilon_{n+1}=H X^\epsilon_{n+1}+\zeta_n,\quad \zeta_{n+1}\sim \mathcal{N}(0, \epsilon \sigma_o^2 I_q),
\end{gathered}
\]
it has small filter error covariance of scale $\epsilon$. In particular, the ensemble  covariance is of scale $\epsilon$ in long time average
\[
\frac{1}{T}\sum_{n=1}^T\E  \|\Chat_n\|_\infty\leq \frac{2n_* }{T\lambda_A^{L}}(\E \|\Chat_0\|+\epsilon M_*)+2(1+\delta_*)\epsilon M_*.
\]
Moreover, the real filter covariance is dominated by $\Chat_n$ with high probability:
\begin{align*}
1-\frac{1}{T}\sum_{n=0}^{T-1} &\Prob(\E_{S} \ehat_n \otimes \ehat_n\preceq r_*( \Chat_n\circ \Dcut^L+\epsilon \rho I_d))\\
&\leq \frac{r_0}{T \epsilon \log r_*}+\frac{2\delta n_* (\E \|\Chat_0\|+\epsilon M_*)}{T \epsilon \lambda_A^{L} \log r_*}(\rho^{-1} \calB_l^2 M_A^2+\tfrac{2r^{1/3}}{(\rho\sigma_o)^{1/3}})\\
&\quad +\frac{\delta}{\log r_*}\left(2(\rho^{-1} \calB_l^2 M_A^2+\tfrac{2r^{1/3}}{(\rho\sigma_o)^{1/3}})(1+\delta_*) M_*+\rho^{-1} \sigma_\xi^2+2\tfrac{r^{1/3}}{\rho^{1/3}}\sigma^{2/3}_o  \right).
\end{align*}
Note that $\epsilon$ appears only in terms that converge to zero with $T\to \infty$. 
\end{thm}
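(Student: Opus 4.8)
The plan is to reduce everything to Theorems \ref{thm:main} and \ref{thm:formloc} by a scaling argument. Observe that the perturbed system differs from the original only in that the noise covariances $\Sigma_n$ and $\sigma_o^2 I_q$ are multiplied by $\epsilon$. Since the LEnKF update \eqref{sys:EnKFloc} is linear in the state and affine in the injected noises, and the Kalman gains $\Khat_n$ depend only on $\Chat_n$ through the ratio structure $\Chat_n H^T(\sigma_o^2 I_q + H\Chat_n H^T)^{-1}$, the whole filtering system is equivariant under the rescaling $\Chat_n \mapsto \epsilon \Chat_n$, $\ehat_n \mapsto \sqrt{\epsilon}\,\ehat_n$, $\sigma_o^2 \mapsto \epsilon \sigma_o^2$, $\sigma_\xi^2\mapsto \epsilon\sigma_\xi^2$. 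Concretely, I would set up the auxiliary quantities $\Chat_n' := \epsilon^{-1}\Chat_n$, $\ehat_n' := \epsilon^{-1/2}\ehat_n$ and check by induction on the LEnKF recursion that $\{\Chat_n', \ehat_n'\}$ obey exactly the recursion of the \emph{unscaled} system with noise levels $\sigma_\xi^2, \sigma_o^2$. The Kalman gain is invariant: $\Khat_n$ computed from $\Chat_n$ with noise $\epsilon\sigma_o^2$ equals $\Khat_n$ computed from $\Chat_n'$ with noise $\sigma_o^2$. Only the initial condition breaks the exact scaling, since $\Chat_0$ is the same for both systems rather than $\epsilon$ times smaller; this is where the $\E\|\Chat_0\|$ terms in the bounds (rather than $\epsilon \E\|\Chat_0\|$) come from, and it is harmless because those terms carry a $1/T$ factor.

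Next I would invoke Theorem \ref{thm:formloc} for the \emph{rescaled} covariance $\Chat_n'$. The hypotheses of Theorem \ref{thm:formloc} involve $\lambda_A$, $r$, $\delta_*$, $M_*$ and $\sigma_\xi^2$, $\sigma_o^2$ at their original (order-one) values, so they hold by assumption. Theorem \ref{thm:formloc} then gives the dissipativity $\E_0 M_{n_*}' \leq \tfrac12 M_0' + (1+2\delta_*)M_*$ for the localization-structure constants $M_n'$ of $\Chat_n'$, and hence the long-time average bound $\frac1T\sum_{k=1}^T \E M_k' \leq \frac{2n_*}{T\lambda_A^L}(\E\|\Chat_0'\|+M_*) + 2(1+\delta_*)M_*$. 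Undoing the scaling, $M_n = \epsilon M_n'$ and $\|\Chat_0'\| = \epsilon^{-1}\|\Chat_0\|$, which yields exactly the stated bound $\frac1T\sum \E\|\Chat_n\|_\infty \leq \frac{2n_*}{T\lambda_A^L}(\E\|\Chat_0\| + \epsilon M_*) + 2(1+\delta_*)\epsilon M_*$ (using $\|\Chat_n\|_\infty \leq M_n$ from Definition \ref{defn:localstructure} with $\Phi(0)=1$). Crucially, this verifies condition \eqref{eqn:stabloc} of Theorem \ref{thm:main} for the perturbed system with $B_0 = 2n_*\lambda_A^{-L}$, $D_0 = 2n_*\lambda_A^{-L}\epsilon M_* $ absorbed appropriately, and $M_0 = 2(1+\delta_*)\epsilon M_*$ — i.e. with stability constants that are themselves $O(\epsilon)$ up to the $1/T$ initialization term.

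Finally I would apply Theorem \ref{thm:main} to the perturbed system. The initial-error hypothesis becomes $\E_S \ehat_0 \otimes \ehat_0 \preceq r_0(\Chat_0 + \epsilon\rho I_d)$: writing it in rescaled form, $\E_S \ehat_0'\otimes\ehat_0' \preceq r_0(\Chat_0' + \rho I_d)$ up to the $\Chat_0$-versus-$\epsilon\Chat_0$ discrepancy, which only inflates $r_0$ by a bounded factor (or one simply re-reads the conclusion with the $\epsilon\rho$ in place of $\rho$, tracking where $\rho$ enters the error bound). Condition 4) on $\Phi$ and $L$ is unchanged since it does not involve the noise scales; condition 1) holds with $M_\Sigma = \epsilon\sigma_\xi^2$, $m_\Sigma = \epsilon\sigma_\xi^2$; condition 5) on $K$ is a fortiori satisfied because \eqref{tmp:sampleK} already demanded more. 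Plugging $M_\Sigma \to \epsilon\sigma_\xi^2$, $\sigma_o^2 \to \epsilon\sigma_o^2$, $\rho \to \epsilon\rho$, $B_0\|\Chat_0\|+D_0 \to 2n_*\lambda_A^{-L}(\E\|\Chat_0\| + \epsilon M_*)$, and $M_0 \to 2(1+\delta_*)\epsilon M_*$ into the conclusion of Theorem \ref{thm:main}, and simplifying the powers of $\epsilon$ (the term $\rho^{-1}M_\Sigma$ becomes $\epsilon^{-1}\rho^{-1}\cdot\epsilon\sigma_\xi^2 = \rho^{-1}\sigma_\xi^2$, the term $r^{1/3}\rho^{-1/3}\sigma_o^{2/3}$ becomes $r^{1/3}(\epsilon\rho)^{-1/3}(\epsilon\sigma_o^2)^{2/3} = \epsilon^{1/3}r^{1/3}\rho^{-1/3}\sigma_o^{2/3}$ which is absorbed as $2 r^{1/3}\rho^{-1/3}\sigma_o^{2/3}$ after bounding $\epsilon^{1/3}\leq 1$, etc.), one obtains precisely the displayed probability bound, in which $\epsilon$ survives only in the first two terms, both carrying a $1/T$ factor.

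The main obstacle is bookkeeping rather than conceptual: one must track carefully how each $\epsilon$ propagates through the gain-invariance argument and through the various fractional powers in the error bound of Theorem \ref{thm:main}, making sure that every $\epsilon$-dependence either cancels or lands on a term with a $1/T$ prefactor. The one genuinely non-cosmetic point is the mismatch at $n=0$ — the unperturbed initial covariance $\Chat_0$ is $\epsilon^{-1}$ times "too large" in rescaled units — and verifying that this contamination is confined to the transient $\frac1T(\cdots)$ terms and never enters the $\epsilon$-independent steady-state part of the bound.
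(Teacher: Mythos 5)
Your proposal is correct and takes essentially the same route as the paper: both rest on the exact homogeneity of the LEnKF under the rescaling $(\Chat_n,\sigma_o^2,\sigma_\xi^2)\mapsto \epsilon(\cdot)$ and reduce the claim to Theorems \ref{thm:formloc} and \ref{thm:main}, the paper by re-verifying their hypotheses for the $\epsilon$-system with rescaled constants ($M_*\mapsto\epsilon M_*$, $(r_0,\rho)\mapsto(\epsilon^{-1}r_0,\epsilon\rho)$, $M_\Sigma\mapsto\epsilon\sigma_\xi^2$), you by rescaling the filter process itself via the scale-invariance of the Kalman gain --- the resulting bookkeeping is identical. One wording slip: the initial-condition mismatch inflates $r_0$ by $\epsilon^{-1}$, not by a bounded factor, and this is precisely the $\tfrac{r_0}{T\epsilon\log r_*}$ term that your final bound (correctly) contains.
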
 

\begin{rem}
We need the system to follow the conditions in Theorem \ref{thm:formloc} only to ensure the stable localized structure exists. If one can find other conditions to verify that the LEnKF follows an $(M_n, \Phi, L)$ localized structure  such that $M_n$ converges to a scale of $\epsilon$, the conditions in Theorem \ref{thm:formloc} can be replaced. 
\end{rem}

\section{Numerical experiments}
\label{sec:num}
There is plenty of numerical evidence showing that LEnKF has good forecast skill even with nonlinear dynamical systems. Moreover, this paper intends to understand LEnKF from a theoretical perspective, not an empirical one. On the other hand,  several new concepts and conditions are introduced in our analysis framework. To understand their significance, we conduct a few simple numerical experiments in this section.

\subsection{Experiments setup: a stochastic turbulence model}
We consider a stochastically forced dissipative advection equation on an one dimensional periodic domain from Section 6.3 of \cite{MH12}:
\[
\frac{\partial u(x,t)}{\partial t}=c \frac{\partial u(x,t)}{\partial x}-\nu u(x,t)+\mu \frac{\partial^2 u(x,t)}{\partial x^2}+\sigma_x \dot{W}(x, t).
\]
To transform it to a discrete linear system, we apply the centered difference formula with spatial grid size $h$, and Euler scheme with time step $\Delta t$. We assume $W(x,t)$ is a white noise in both time and space. The discretized signal-system $[X_{n,1},\cdots, X_{n,d}]^T$ follows
\begin{equation}
\label{sys:PDE}
\begin{gathered}
X_{n+1,i}=a_- X_{n,i-1}+a_0 X_{n,i}+a_+ X_{n,i+1}+ \sigma_x\sqrt{\Delta t} W_{n+1,i}, \quad i=1,\ldots,d;\\
a_-=\tfrac{\mu \Delta t}{h^2}-\tfrac{c \Delta t}{2h},\quad a_0=1-\tfrac{2\mu \Delta t}{h^2}-\nu\Delta t,\quad a_+=\tfrac{\mu \Delta t}{h^2}+\tfrac{c \Delta t}{2h}.
\end{gathered}
\end{equation}
The indices should be interpreted cyclically, that is $X_{n,0}=X_{n,d}$ and $X_{n,d+1}=X_{n,1}$. 
The natural distance between indices is $\dist(i,j)=\min\{|i-j|,||i-j|-d|\}$. The system noises $W_{n,i}$ are independent samples from $\mathcal{N}(0,1)$. 
We also initialize $X_{0,i}\sim \mathcal{N}(0,1)$ for simplicity. 
Evidently, if we formulate \eqref{sys:PDE} in the format of \eqref{sys:signalobs}, the corresponding matrix $A_n$ is constant with  bandwidth $l=1$. In other words it is tridiagonal.
We assume one observation is made every $p$ components with independent Gaussian noise $B_{n,k}\sim \mathcal{N}(0,1)$:
\[
Y_{n,k}=X_{n,p(k-1)+1}+\sigma_o B_{n,k}.
\]

A simple LEnKF with domain localization radius $l=1$, inflation $r=1.1$ will be applied to recover $X_n$. A small sample size $K=10$ is taken. As comparison, we implement  a standard EnKF with the same inflation, sample size and sample noise realization. A standard Kalman filter is also computed to indicate the optimal filter error. We are interested to see
\begin{itemize}
\item Does LEnKF have a close to optimal performance? Does localization play a key role?
\item Is filter performance robust against dimension increase?
\item Does filter performance scale with the noise strength?
\item Does the LEnKF ensemble covariance localize, and is this structure stable?
\item Do the a-priori conditions of Theorem \ref{thm:formloc} hold?
\end{itemize}

In the discussion below, we consider dimension in a wide range $d=10,100,1000$. Yet we will fix the grid size $h$ in each regime. This corresponds to a sequence of domains with increasing size, but not a fixed domain with increasing refinement. Although the latter  can also have very high dimension, localization is not a suitable tool; a proper projection to the low effective dimension should be more effective \cite{MT17cpam}.
Also it is worth noticing that there are better ways to filter \eqref{sys:PDE}, such as Fourier domain filtering \cite{MH12}. We are running LEnKF here just to support our theoretical analysis.

\subsection{Regime I: strong dissipation}
We first consider a regime of \eqref{sys:PDE} with strong uniform damping and weak advection
\[
h=1,\quad \Delta t=0.1,\quad p=5, \quad\nu=5,\quad c=0.1,\quad \mu=0.1,\quad \sigma_x=\sigma_o=1.
\]

 In this regime, the conditions of Theorem \ref{thm:formloc} can be verified. In particular, \eqref{aspt:weakinter} can be formulated as
\begin{equation}
\label{tmp:tridiagonal}
a_- \lambda_A^{-1} +a_0+a_+ \lambda_A^{-1}\leq \lambda_A. 
\end{equation}
Direct numerical computation shows that $\lambda_A=0.5186$ satisfies this relation. Furthermore, we can verify that $(\delta^*, M_*)=(0.128,0.2187)$ satisfy condition 3) of Theorem \ref{thm:formloc}. Theorem \ref{thm:formloc} predicts a stable stochastic sequence $M_n$ exists so $\Chat_n$ follows localized structure $(M_n, \Phi, 4)$, where $\Phi(x)=\lambda_A^{x\wedge L}$ and $M_n$ has its mean bounded by $8.8959$. On the other hand,  Theorem \ref{thm:formloc} requires the  sample size to be around $K=2.8\times 10^4$ for $d=100$, and $K=7.34\times 10^4$ for $d=10^6$. We will see $K=10$ is sufficient for LEnKF to perform well numerically. The overestimate is reasonable as  theoretical analysis is often too conservative. The main point of theoretical analysis is showing a logarithmic dependence of $K$ on the dimension.

The numerical results are presented in Figure \ref{fig:stable}. In subplot a) the dimension average square forecast error 
\[
\text{DSE}:=|X_n-\overline{\Xhat}_n|^2/d
\]
 of LEnKF is plotted for 100 iterations. The time mean DSE (MSE) is around 0.142 for $d=100$. This is comparable with the  optimal Kalman filter MSE 0.129. Moreover, this performance is robust for all dimensions, MSE=0.137 for $d=10$ and MSE=0.143 for $d=1000$, while the oscillation is stronger in $d=10$ case due to averaging over a small dimension. 
 
 Since this regime is very stable, EnKF without localization also has surprisingly good performance, as shown in subplot b). Its MSE is around 0.15, which is worse than LEnKF. This shows that, while the conditions of Theorem \ref{thm:formloc} are sufficient for LEnKF to work well, they might be too strong. It will be interesting if sharper working conditions for LEnKF can be found. It will also be interesting if one can show such strong conditions can already guarantee EnKF to work without localization. 
   
      Two other properties predicted by our theory are also validated. In subplot c), the localization status $M_n$ is plotted for all three dimensions. All three time sequences are stable, and they are all bounded below the theoretical estimate $8.8959$ from Theorem \ref{thm:formloc}. We also test LEnKF with small scale system noises $\sigma^{\epsilon}_x=\sqrt{\epsilon} \sigma_x,\sigma^{\epsilon}_o=\sqrt{\epsilon} \sigma_o$. In subplot d), we plot the time mean DSE of $\epsilon=1,\frac12, \frac{1}{4},\ldots, \frac{1}{32}$ in logarithmic scales. It is clear that the LEnKF has the correct MSE scale of $\epsilon$ as Theorem \ref{thm:accuracy} predicted.
  \begin{figure}
  \centering
  \hspace*{-2.5cm}
\includegraphics[scale=0.45]{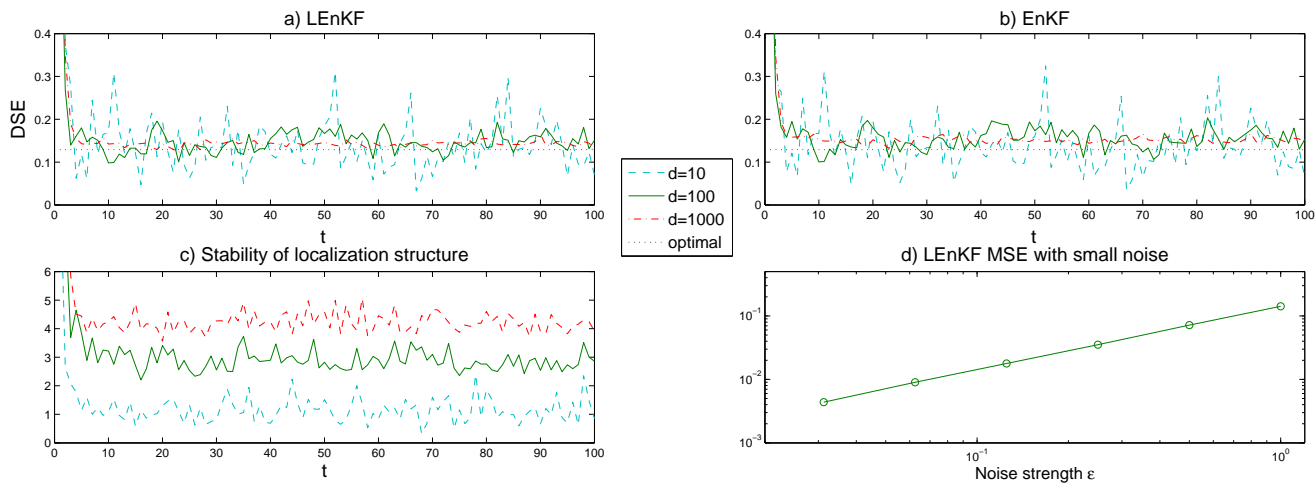}
\caption{\label{fig:stable} Filter performance in stable regime I. }
\end{figure}

\subsection{Regime II: strong advection}
The second regime we considered has a strong advection, while the damping is weak:
\[
h=0.2,\quad \Delta t=0.1,\quad p=5, \quad\nu=0.1,\quad c=2,\quad \mu=0.1,\quad \sigma_x=\sigma_o=1.
\]
This regime is close to unstable, since the linear system map $A_n$ has spectral norm 0.99. \eqref{tmp:tridiagonal} does not have a solution below $1$, so the conditions of Theorem \ref{thm:formloc} are not verifiable. Nevertheless, we find empirically the LEnKF ensemble covariance matrices are localized. In Figure \ref{fig:cov}, we demonstrate this by plotting 
\[
\widehat{\Phi}(x)=\frac{1}{d}\E \left(\sum_{i=1}^{d-x} |[\Chat_{n}]_{i,i+x}|+\sum_{i=d-x+1}^{d} |[\Chat_{n}]_{i,i+x-d}|\right)
\]
using empirical average from 1000 samples with $d=100, n=100.$ The clear covariance strength transition around $x=4$ indicates that the ensemble covariance is localized. Therefore Theorem \ref{thm:main} applies and predicts that LEnKF will have a good performance.
\begin{wrapfigure}{R}{0.5\textwidth}
\centering
\includegraphics[width=0.6\textwidth]{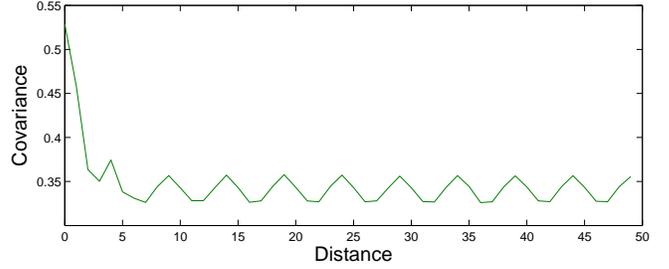}
\caption{\label{fig:cov} Localization structure}
\end{wrapfigure}

This is indeed the case. In subplot a) of Figure \ref{fig:advec}, we see that LEnKF has a forecast skill. The MSE is around 1.63 for $d=100$, where the optimal Kalman filter MSE is 1.06. This performance does not change much with the dimension, MSE=1.42 for $d=10$, MSE=1.72 for $d=1000$. The EnKF on the other hand is highly unstable except for the low dimension $d=10$ case. In subplot b), we see for $d=100$ and $1000$, the DSE of EnKF grows exponentially to $10^{10}$. This is a phenomenon known as EnKF catastrophic filter divergence, previously studied by \cite{MH12, KMT15}. Now this also demonstrates how important is the localization technique.  Such divergence can be resolved by introducing an adaptive additive inflation, where the stability can be rigorously proved \cite{TMK15}. 

In this unstable regime,  LEnKF retains its stability and accuracy. Since the localization structure does not have a theoretical ground in this regime,  Figure subplot c) plots only the largest matrix component of $\Chat_n$. From it we see the LEnKF ensemble covariance is stochastically stable for all three dimensions. Like in Regime I, we also test  LEnKF with small scale system noises $\sigma^{\epsilon}_x=\sqrt{\epsilon} \sigma_x,\sigma^{\epsilon}_o=\sqrt{\epsilon} \sigma_o,$ where $\epsilon=1,\frac12, \frac{1}{4},\ldots, \frac{1}{32}$. Subplot d) indicates the LEnKF has the correct MSE scaling with $\epsilon$.

  \begin{figure}
  \centering
\hspace*{-2.5cm}
\includegraphics[scale=0.45]{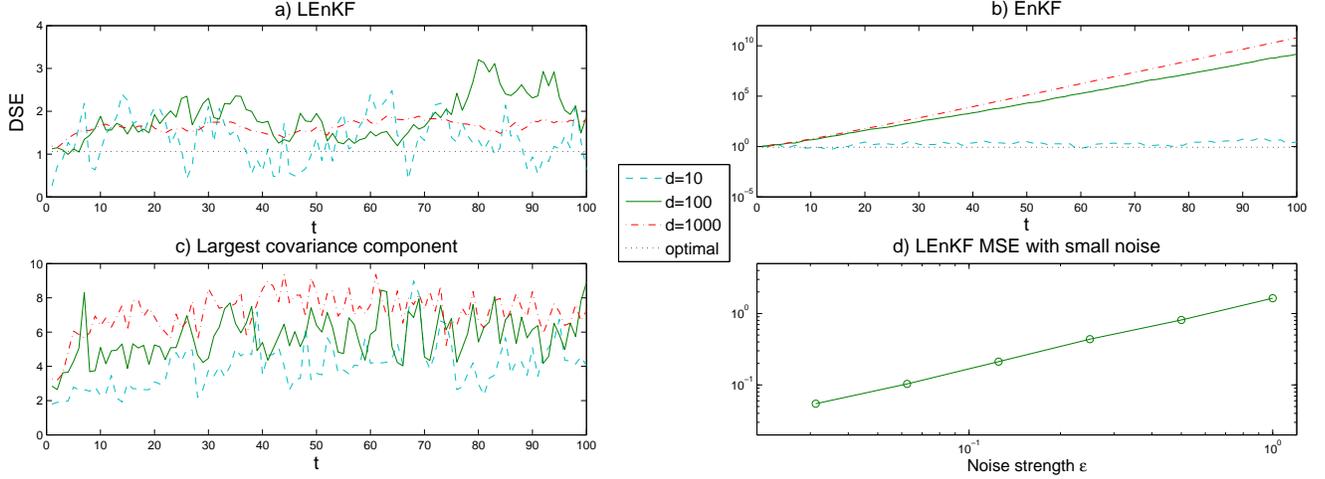}
\caption{\label{fig:advec} Filter performance in stable regime II. }
\end{figure}

\section{Concentration of localized random matrices}
\label{sec:concen}
In this section, we present the proof of Theorem \ref{thm:localconcen}. While part a) is more useful, it can be established easily from part b), using a similar argument as in \cite{BL08}. 
\subsection{Entry-wise concentration}
It is well known that the averages of independent Gaussian variables concentrate around their expected values. In specific,  a simplified version of theorem 1.1 from \cite{RV13} is:
\begin{thm}[Hanson-Wright inequality]
\label{thm:HW}
Let $\xi \sim \mathcal{N}(0,I_n)$ and $A$ be an $n\times n$ matrix. Then  for any $t\geq 0$
\[
\Prob\left( |\xi^T A \xi- \E \xi^T A \xi |>t\right)\leq 2\exp \left(-c\min\left(\frac{t^2}{ \|A\|^2_{HS}}, \frac{t}{\|A\|} \right) \right). 
\]
Here $c$ is a constant independent of other parameters.  The Hilbert-Schmidt (Frobenius) norm is denoted by $\|A\|_{HS}=[\sum_{i,j} [A]_{i,j}^2]^{1/2}$.
\end{thm}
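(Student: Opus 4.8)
The plan is to exploit the rotational invariance of the standard Gaussian, which is available precisely because $\xi\sim\mathcal{N}(0,I_n)$; this reduces the quadratic form to a weighted sum of independent $\chi^2_1$ variables and lets me sidestep the decoupling machinery that a general sub-Gaussian statement would require. First I would reduce to a symmetric matrix: since $\xi^T A\xi=\xi^T A_s\xi$ with $A_s=\tfrac12(A+A^T)$, and since the symmetric and antisymmetric parts are orthogonal in the Hilbert--Schmidt inner product, one has $\|A_s\|\le\|A\|$ and $\|A_s\|_{HS}\le\|A\|_{HS}$, so it suffices to prove the bound for $A_s$ and then weaken the norms back to those of $A$.

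Next I would diagonalize $A_s=\sum_{i}\lambda_i u_i u_i^T$ in an orthonormal eigenbasis $\{u_i\}$. Because $\xi$ is rotationally invariant, the coordinates $g_i=u_i^T\xi$ are again i.i.d.\ $\mathcal{N}(0,1)$, so that
\[
\xi^T A_s\xi-\E\,\xi^T A_s\xi=\sum_{i}\lambda_i\,(g_i^2-1).
\]
This is the decisive simplification: the quadratic form becomes an explicit sum of independent centered chi-square variables, with $\sum_i\lambda_i^2=\|A_s\|_{HS}^2$ and $\max_i|\lambda_i|=\|A_s\|$.

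The core estimate is a moment-generating-function bound. Using the exact identity $\E\exp(s(g^2-1))=e^{-s}(1-2s)^{-1/2}$ together with the elementary inequality $-\tfrac12\log(1-2u)-u\le 2u^2$, valid for $|u|\le\tfrac14$, I obtain $\E\exp(s\lambda_i(g_i^2-1))\le\exp(2s^2\lambda_i^2)$ whenever $|s|\le 1/(4\|A_s\|)$. Independence then yields
\[
\E\exp\!\Big(s\sum_i\lambda_i(g_i^2-1)\Big)\le\exp\!\big(2s^2\|A_s\|_{HS}^2\big),\qquad |s|\le\tfrac{1}{4\|A_s\|}.
\]
A Chernoff bound $\Prob(\,\cdot\,>t)\le\exp(-st+2s^2\|A_s\|_{HS}^2)$ optimized over this constrained range gives the two-regime answer: the interior optimum $s=t/(4\|A_s\|_{HS}^2)$ is admissible exactly when $t\le\|A_s\|_{HS}^2/\|A_s\|$ and yields the sub-Gaussian term $\exp(-t^2/(8\|A_s\|_{HS}^2))$, while for larger $t$ the boundary choice $s=1/(4\|A_s\|)$ yields the sub-exponential term $\exp(-ct/\|A_s\|)$. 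Combining the two cases bounds the upper tail; applying the same argument to $-A_s$ bounds the lower tail, and a union bound over the two tails produces the factor $2$ and the stated $\min\{t^2/\|A\|_{HS}^2,\ t/\|A\|\}$.

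The only delicate point is the constrained optimization forced by the restricted validity $|s|\le 1/(4\|A_s\|)$ of the MGF bound: it is exactly this restriction that turns a clean Gaussian tail into the minimum of a sub-Gaussian and a sub-exponential term, and the absolute constant $c$ must be tracked consistently across the two regimes. I expect no obstacle beyond this bookkeeping, since the Gaussian hypothesis removes the decoupling step and the diagonal/off-diagonal split that dominate the general argument in \cite{RV13}.
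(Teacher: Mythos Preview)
Your argument is correct. Note, however, that the paper does not actually prove this theorem: it is quoted as ``a simplified version of theorem 1.1 from \cite{RV13}'' and used as a black box in the proof of Lemma~\ref{lem:Fuv}. So there is no paper proof to compare against.

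That said, your route is the natural one in the Gaussian case and is worth recording. By passing to the symmetric part $A_s=\tfrac12(A+A^T)$ (with $\|A_s\|\le\|A\|$ and $\|A_s\|_{HS}\le\|A\|_{HS}$) and then diagonalizing, you reduce to $\sum_i\lambda_i(g_i^2-1)$ with i.i.d.\ standard normals $g_i$; the exact MGF of a centered $\chi^2_1$ together with the elementary bound $-\tfrac12\log(1-2u)-u\le 2u^2$ for $|u|\le\tfrac14$ gives a sub-Gaussian MGF on the restricted range $|s|\le 1/(4\|A_s\|)$, and the constrained Chernoff optimization then produces exactly the $\min\{t^2/\|A\|_{HS}^2,\ t/\|A\|\}$ exponent. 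This is genuinely simpler than the general argument in \cite{RV13}, which must handle arbitrary sub-Gaussian coordinates and therefore relies on a diagonal/off-diagonal split and a decoupling step; the Gaussian rotational invariance you exploit is precisely what makes both of those unnecessary here. Since the paper only ever applies Theorem~\ref{thm:HW} with $\xi\sim\mathcal{N}(0,I_K)$ and $A$ a multiple of the identity (inside Lemma~\ref{lem:Fuv}), your self-contained Gaussian proof is more than sufficient for every use in the paper.
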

This provides us a straight forward way to control  the random matrix  entries $[Z]_{i,j}$ in Theorem \ref{thm:localconcen}.

\begin{lem}
\label{lem:Fuv}
Under the conditions of Theorem \ref{thm:localconcen},  let $\Delta=Z-\E Z$. There is an absolute constant $c$ such that for any $t\geq 0$, 
\[
\Prob(|[\Delta]_{i,j}|>\sigma_{a,z} t)\leq 8\exp(-c K\min\{t,t^2\}). 
\]
\end{lem}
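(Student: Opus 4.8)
The plan is to split the centered entry $[\Delta]_{i,j}$ into a linear (Gaussian) part and a centered quadratic part in the underlying Gaussians, bound each separately via a union bound after allocating half of the deviation budget $\sigma_{a,z}t$ to each, and then reduce the quadratic part to the Hanson--Wright inequality (Theorem \ref{thm:HW}). First I would expand $[\Delta]_{i,j}=[Z]_{i,j}-[\E Z]_{i,j}$: since $\E Z=\Sigma_a+\Sigma_z$ and $\tfrac1K\sum_k [a_k]_i[a_k]_j=[\Sigma_a]_{i,j}$, writing $a_{k,i}=[a_k]_i$, $z_{k,i}=[z_k]_i$ one gets
\[
[\Delta]_{i,j}=\underbrace{\frac1K\sum_{k=1}^K\bigl(a_{k,i}z_{k,j}+a_{k,j}z_{k,i}\bigr)}_{=:L_{ij}}+\underbrace{\Bigl(\frac1K\sum_{k=1}^K z_{k,i}z_{k,j}-[\Sigma_z]_{i,j}\Bigr)}_{=:Q_{ij}-\E Q_{ij}},
\]
so $\Prob(|[\Delta]_{i,j}|>\sigma_{a,z}t)\le\Prob(|L_{ij}|>\tfrac12\sigma_{a,z}t)+\Prob(|Q_{ij}-\E Q_{ij}|>\tfrac12\sigma_{a,z}t)$.

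For the linear term, $L_{ij}$ is a centered Gaussian, and by independence across $k$, the bound $|[\Sigma_z]_{i,j}|\le[\Sigma_z]_{i,i}^{1/2}[\Sigma_z]_{j,j}^{1/2}$ and AM--GM, one checks $\mathrm{Var}(L_{ij})\le\tfrac2K\bigl([\Sigma_a]_{i,i}[\Sigma_z]_{j,j}+[\Sigma_a]_{j,j}[\Sigma_z]_{i,i}\bigr)\le 4\sigma_{a,z}^2/K$, using the definition $\sigma_{a,z}=\max_{i,j}\{[\Sigma_z]_{i,i},[\Sigma_a]_{i,i}^{1/2}[\Sigma_z]_{j,j}^{1/2}\}$. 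The standard Gaussian tail then gives $\Prob(|L_{ij}|>\tfrac12\sigma_{a,z}t)\le 2\exp(-Kt^2/32)$, and since $t^2\ge\min\{t,t^2\}$ this is at most $2\exp(-cK\min\{t,t^2\})$.

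For the quadratic term I would whiten: write $z_k=\Sigma_z^{1/2}g_k$ with $g_k\sim\mathcal{N}(0,I_d)$ i.i.d.\ and stack $\xi=(g_1,\dots,g_K)\sim\mathcal{N}(0,I_{Kd})$. Then $Q_{ij}=\xi^T M\xi$, where $M$ is block diagonal with $K$ identical blocks, each equal to $\tfrac1K$ times the symmetrization of the rank-one matrix $\Sigma_z^{1/2}\bfe_i\bfe_j^T\Sigma_z^{1/2}$. Computing norms of this (after symmetrization, rank $\le 2$) matrix and again using $[\Sigma_z]_{i,i}^{1/2}[\Sigma_z]_{j,j}^{1/2}\le\sigma_{a,z}$ gives $\|M\|\le\sigma_{a,z}/K$ and $\|M\|_{HS}\le\sigma_{a,z}/\sqrt K$; one also verifies $\E\xi^T M\xi=[\Sigma_z]_{i,j}=\E Q_{ij}$. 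Applying Theorem \ref{thm:HW} with $s=\tfrac12\sigma_{a,z}t$ yields $\Prob(|Q_{ij}-\E Q_{ij}|>\tfrac12\sigma_{a,z}t)\le 2\exp\bigl(-c\min\{Kt^2/4,\,Kt/2\}\bigr)\le 2\exp(-cK\min\{t,t^2\})$.

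Summing the two estimates and absorbing constants gives $\Prob(|[\Delta]_{i,j}|>\sigma_{a,z}t)\le 4\exp(-cK\min\{t,t^2\})\le 8\exp(-cK\min\{t,t^2\})$ for an absolute $c$ (the factor $8$ is deliberately loose). The only mildly delicate point — and what I expect to be the main bookkeeping obstacle rather than a genuine difficulty — is organizing all the covariance products so that each is controlled by the single scale $\sigma_{a,z}$: every term must be dominated either by a diagonal entry $[\Sigma_z]_{i,i}$ or by a mixed product $[\Sigma_a]_{\cdot\cdot}^{1/2}[\Sigma_z]_{\cdot\cdot}^{1/2}$, both $\le\sigma_{a,z}$ by definition, and one must make sure the $\min\{t,t^2\}$ shape is preserved through the split into the sub-Gaussian linear part and the sub-exponential quadratic part. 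Beyond that, the lemma is just Hanson--Wright plus this normalization, exactly as in the $a_k\equiv 0$ case treated in \cite{BL08}.
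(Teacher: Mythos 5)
Your proof is correct and follows essentially the same route as the paper: split the centered entry into a Gaussian linear part and a centered quadratic part, bound the former by a Gaussian tail and the latter by Hanson--Wright (Theorem \ref{thm:HW}), then combine with a union bound; your variance and norm computations ($\mathrm{Var}(L_{ij})\le 4\sigma_{a,z}^2/K$, $\|M\|\le\sigma_{a,z}/K$, $\|M\|_{HS}\le\sigma_{a,z}/\sqrt K$, $\E\,\xi^TM\xi=[\Sigma_z]_{i,j}$) all check out. The only difference is organizational: the paper first polarizes, $[\Delta]_{i,j}=\tfrac14(\Delta_{\bfe_i+\bfe_j}-\Delta_{\bfe_i-\bfe_j})$, so that Hanson--Wright is only ever applied to a scalar multiple of $I_K$ (essentially a chi-square tail) and the union over $u=\bfe_i\pm\bfe_j$ is where its factor $8$ arises, whereas you handle the off-diagonal entry directly via a whitened block-diagonal matrix in $\reals^{Kd}$ with rank-two blocks --- both are valid, and your resulting constant $4$ is in fact slightly sharper than the stated $8$.
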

\begin{proof}
For any vector $u$, denote $\Delta_u=u^T[Z-\E Z] u $. Then by symmetry,
\[
[\Delta]_{i,j}=\frac14(\Delta_{\bfe_i+\bfe_j}-\Delta_{\bfe_i-\bfe_j}). 
\]
Recall that $\bfe_i$ is the $i$-th standard basis vector.
So it suffices to find a concentration bound for $\Delta_u$ with $u=\bfe_i\pm \bfe_j$. To do that, note that $u^T\Sigma_z u=\E u^T z_k z_k^T u$, so we can decompose $\Delta_u$
\begin{align*}
\Delta_{u}&=K^{-1}\sum_{k=1}^K\left[ \langle u, a_k+z_k\rangle \langle u, a_k+z_k\rangle-\langle u, a_k\rangle \langle u, a_k\rangle- \E \langle u, z_k\rangle \langle u, z_k\rangle\right]\\
&=2K^{-1}\sum_{k=1}^K \langle u, a_k\rangle \langle u, z_k\rangle+ K^{-1}\sum_{k=1}^K(\langle u, z_k\rangle \langle u, z_k\rangle- \E \langle u, z_k\rangle \langle u, z_k\rangle )
\end{align*}
We  denote $\langle a, b\rangle=a^Tb$ as the inner product, and the two summations above as I and II in the following. Notice that $\langle u, z_k\rangle\sim \mathcal{N}(0, u^T\Sigma_z u  ),K^{-1}\sum_{k=1}^K \langle \bfe_j, a_k\rangle^2=u^T\Sigma_a u$. Moreover for $u=\bfe_i\pm \bfe_j$, 
\begin{equation}
\label{tmp:usigma}
u^T\Sigma_z u=[\Sigma_z]_{i,i}+[\Sigma_z]_{j,j}\pm 2 [\Sigma_z]_{i,j} \leq 2 ([\Sigma_z]_{i,i}+[\Sigma_z]_{j,j})\leq 4\sigma_{a,z}.
\end{equation}
We have the same conclusion for $u^T \Sigma_a u$. Because $ \langle u, a_k\rangle$ is a deterministic scalar, 
\[
\langle u, a_k\rangle \langle u, z_k\rangle\sim \mathcal{N}(0,  u^T a_k a_k^Tu\cdot u^T\Sigma_z u)
\]
and
\[
\text{I}=2K^{-1}\sum_{k=1}^K \langle u, a_k\rangle \langle u, z_k\rangle\sim \mathcal{N}(0, 4K^{-1} u^T \Sigma_au\cdot u^T\Sigma_z u). 
\]
Because by definition of $\sigma_{a,z}$,  $u^T \Sigma_au\cdot u^T\Sigma_z u\leq 16 \sigma_{a,z}^2 $,  by the Chernoff bound for Gaussian distributions, there is a $c_1>0$ so that
\[
\Prob(|\text{I}|>\tfrac12 \sigma_{a,z}  t)\leq 2 \exp(-  c_1 K t ). 
\]
In order to deal with II,  notice that 
\[
\xi:=\frac{1}{\sqrt{ u^T \Sigma_z u}}[\langle u, z_1\rangle,\cdots, \langle u, z_K\rangle]^T\sim \mathcal{N}(0, I_{K}).
\]
So 
\[
\text{II}=K^{-1}\sum_{k=1}^K(\langle u, z_k\rangle^2- \E \langle u, z_k\rangle^2 )=\xi^T A \xi-\E \xi^T A\xi,
\]
where $A=\frac 1K (u^T \Sigma_z u) I_K $. Clearly, $\|A\|\leq \frac{4\sigma_{a,z}}{K}$, and $\|A\|^2_{HS}\leq \frac{16\sigma_{a,z}^2}{K}$.
Therefore, by Theorem \ref{thm:HW} there is a  constant $c_2$ so that for all $s\geq 0$
\[
\Prob(|\text{II}|> \tfrac 12  s)\leq 2\exp(-c_2 \min K\{\tfrac{ s^2}{\sigma_{a,z}^2}, \tfrac{s}{\sigma_{a,z}}\}).
\]
Let $t=\sigma^{-1}_{a,z} s$, the inequality can be written as
\[
\Prob(|\text{II}|> \tfrac 12  \sigma_{a,z}t)\leq 2\exp(-c_2 K\min \{t, t^2\}).
\]
Because $|\Delta_{u}|\leq |\text{I}|+|\text{II}|$, by the union bound, if we let $c=\min\{c_1, c_2\}$ ,
\[
\Prob(|\Delta_u|>\sigma_{a,z}t)\leq \Prob(|\text{I}|> \tfrac 12 \sigma_{a,z} t)+\Prob(|\text{II}|> \tfrac 12 \sigma_{a,z} t)\leq 4\exp(-c K\min \{t, t^2\}).
\]
Finally, recall the bound above holds for all $u=\bfe_i\pm\bfe_j$, so by \eqref{tmp:usigma}
\[
\Prob(|[\Delta]_{i,j}|>\sigma_{a,z}t)\leq \Prob(|\Delta_{\bfe_i+\bfe_j}|>\sigma_{a,z}t)+\Prob(|\Delta_{\bfe_i-\bfe_j}|>\sigma_{a,z}t)\leq 8\exp(-c K\min \{t, t^2\}). 
\]
\end{proof}
Entry-wise concentration now comes as a direct corollary. 
\begin{proof}[Proof of Theorem \ref{thm:localconcen} b)]
Let $\Delta=Z-\E Z$. Note that $\|Z-\E Z\|_\infty=\max_{i,j=1,\ldots,d} \{|[\Delta]_{i,j}|\},$ so using the previous lemma we have our claim by the union bound 
\[
\Prob(\|Z-\E Z\|_\infty>\sigma_{a,z}t)\leq \sum_{i,j} \Prob(|[\Delta]_{i,j}|>\sigma_{a,z}t)\leq  8d^2 \exp(-cK\min \{t, t^2\}). 
\]
\end{proof}
\subsection{Summation of entry-wise deviation}
One simple fact of matrix norm is that $\|\Delta\|\leq \|\Delta\|_1$. This is also exploited by \cite{BL08}
\begin{lem}
\label{lem:l2byl1}
Given a matrix $\Delta$, the following holds
\begin{enumerate}[a)]
\item If $\Delta$ is symmetric, then 
\[
\|\Delta\|\leq\|\Delta\|_1= \max_i \left\{\sum_{j=1}^d  |[\Delta]_{i,j}|\right\}.
\]
\item $\|\Delta\|_\infty\leq \|\Delta\|$ always holds.  If in addition $\Delta$ has bandwidth $l$, then 
$ \|\Delta\|\leq \calB_l  \|\Delta\|_\infty.$
\end{enumerate}
\end{lem}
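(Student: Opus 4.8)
The plan is to dispatch the three inequalities separately, each by an elementary operator-norm argument; none is genuinely hard, and the one point that needs a moment's care is flagged at the end.

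\emph{The bound $\|\Delta\|_\infty\leq\|\Delta\|$.} Every entry is a bilinear form against standard basis vectors: $|[\Delta]_{i,j}|=|\bfe_i^T\Delta\bfe_j|\leq\|\bfe_i\|\,\|\Delta\|\,\|\bfe_j\|=\|\Delta\|$. Taking the maximum over $i,j$ gives the claim, with no hypothesis on $\Delta$.

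\emph{Part a).} The identity $\|\Delta\|_1=\max_i\sum_j|[\Delta]_{i,j}|$ is merely the definition restated, so the content is $\|\Delta\|\leq\|\Delta\|_1$. I would invoke the classical interpolation bound (Schur test / Riesz–Thorin between $\ell^1$ and $\ell^\infty$): for any square matrix $A$, $\|A\|\leq\sqrt{R\,C}$ where $R=\max_i\sum_j|[A]_{i,j}|$ is the largest absolute row sum and $C=\max_j\sum_i|[A]_{i,j}|$ the largest absolute column sum. When $\Delta=\Delta^T$ one has $R=C=\|\Delta\|_1$, so $\|\Delta\|\leq\|\Delta\|_1$. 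An even more self-contained route is Gershgorin's circle theorem: since $\Delta$ is symmetric, $\|\Delta\|$ is the largest eigenvalue modulus, and every eigenvalue $\lambda$ satisfies $|\lambda-[\Delta]_{i,i}|\leq\sum_{j\neq i}|[\Delta]_{i,j}|$ for some $i$, whence $|\lambda|\leq\sum_j|[\Delta]_{i,j}|\leq\|\Delta\|_1$.

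\emph{The bound $\|\Delta\|\leq\calB_l\|\Delta\|_\infty$ under bandwidth $l$.} Here I would argue directly. Fix a unit vector $v$. Since $[\Delta]_{i,j}=0$ whenever $\dist(i,j)>l$, the $i$-th row is supported in $\calI_i=\{j:\dist(i,j)\leq l\}$, a set of size at most $\calB_l$, so Cauchy–Schwarz gives $([\Delta v]_i)^2=(\sum_{j\in\calI_i}[\Delta]_{i,j}v_j)^2\leq\calB_l\,\|\Delta\|_\infty^2\sum_{j\in\calI_i}v_j^2$. Summing over $i$ and interchanging the order of summation, each $v_j^2$ is weighted by $\#\{i:\dist(i,j)\leq l\}\leq\calB_l$ (using symmetry of $\dist$), so $\|\Delta v\|^2\leq\calB_l^2\|\Delta\|_\infty^2\|v\|^2$; taking the supremum over $v$ finishes the proof. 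The only thing worth double-checking — the ``obstacle'', such as it is — is that both the per-row and the per-column supports are bounded by the same constant $\calB_l$, which is precisely what makes the symmetric Schur bound (or the double counting) go through; this is where symmetry of the distance $\dist$ enters.
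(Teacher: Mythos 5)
Your proposal is correct, but it reaches both nontrivial inequalities by a different route than the paper. For part a), the paper argues by semidefinite domination: from $\pm(\bfe_i\bfe_j^T+\bfe_j\bfe_i^T)\preceq \bfe_i\bfe_i^T+\bfe_j\bfe_j^T$ it decomposes $\Delta$ entrywise and obtains $\Delta\preceq \|\Delta\|_1 I_d$ (hence, applied to $-\Delta$ as well, the norm bound), whereas you invoke the classical Schur test $\|A\|\leq\sqrt{RC}$ or Gershgorin together with the fact that the operator norm of a symmetric matrix is its spectral radius; both are valid, the paper's version being self-contained and yielding the slightly stronger PSD statement, yours leaning on standard results. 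For the bandwidth bound in part b), the paper bootstraps from part a): it writes $\|\Delta\|=\|\Delta\Delta^T\|^{1/2}$ and bounds the row sums of $\Delta\Delta^T$ by $\calB_l^2\|\Delta\|_\infty^2$ using the bandwidth, while you estimate $\|\Delta v\|^2$ directly by Cauchy--Schwarz on each row and a double-counting step in which each $v_j^2$ is hit at most $\calB_l$ times; your flagged point (that the column supports are also of size at most $\calB_l$, by symmetry of $\dist$) is exactly the place where the paper instead uses the symmetric matrix $\Delta\Delta^T$, and both arguments produce the same constant $\calB_l$. In short: same statement, same constants, two equally valid proofs, with yours independent of part a) and the paper's reusing it.
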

\begin{proof}
For a) part, recall that $\bfe_i$ is the $i$-th standard basis vector. Notice that 
\[
\pm (\bfe_i \bfe_j^T+ \bfe_j \bfe_i^T)\preceq \bfe_i \bfe_i^T+ \bfe_j \bfe_j^T. 
\]
Therefore
\begin{align*}
\Delta&=\sum_i [\Delta]_{i,i}\bfe_i\bfe_i^T+ \frac{1}{2}\sum_{i\neq j} [\Delta]_{i,j} (\bfe_i \bfe_j^T+\bfe_j \bfe_i^T)\\
&\preceq \sum_{i=1}^d [\Delta]_{i,i}\bfe_i\bfe_i^T+\frac{1}{2}\sum_{i\neq j} |[\Delta]_{i,j}| (\bfe_i \bfe_i^T+\bfe_j \bfe_j^T)= \sum_{i=1}^d \sum_{j=1}^d |[\Delta]_{i,j}|\bfe_i\bfe_i^T\preceq \|\Delta\|_1 I_d. 
\end{align*}

For the b) part, by the definition of operator norm, and $\|\bfe_i\|=\|\bfe_j\|=1$, we have
\[
|\bfe_i \Delta \bfe^T_j|\leq \|\Delta\|.
\]
Taking maximum among all $i$ and $j$, we have $\|\Delta\|_\infty\leq \|\Delta\|$. 

Next note that $\|\Delta\|=\|\Delta\Delta^T\|^{1/2}\leq \max_i \sum_j |[\Delta\Delta^T]_{i,j}|$,  and if $\Delta$ is of bandwidth $l$, by part a)
\[
\sum_j |[\Delta\Delta^T]_{i,j}|\leq \sum_{k:\dist(i,k)\leq l}\sum_{j:\dist(j,k)\leq l} |[\Delta]_{i,k}||[\Delta]_{j,k}|\leq \calB^2_l \|\Delta\|_\infty^2.
\]
Therefore $\|\Delta\|\leq \calB_l \|\Delta\|_\infty$. 
\end{proof}
Now the Theorem \ref{thm:localconcen} a) comes as a direct corollary:
\begin{proof}[Proof of Theorem \ref{thm:localconcen} a)]
Let $\Delta=Z-\E Z$.  By Lemma \ref{lem:l2byl1} a), 
\[
\|\Delta\circ \bfD_L\|\leq \|\Delta\circ \bfD_L\|_1=\max_i\left\{\sum_{j=1}^d [\bfD_L]_{i,j} |[\Delta]_{i,j}|\right\}\leq \|\bfD_L\|_1 \max_{i,j} |[\Delta]_{i,j}|=\|\bfD_L\|_1 \|Z-\E Z\|_\infty. 
\]
Therefore by part b) of this theorem, 
\[
\Prob(\|\Delta\circ \bfD_L\| \geq \|\bfD_L\|_1\sigma_{a,z}t )\leq \Prob\left( \|Z-\E Z\|_\infty\geq  \sigma_{a,z} t\right)\leq 8\exp(2\log d- cK \min \{t, t^2\}). 
\]
\end{proof}
\section{Error analysis of LEnKF}
\label{sec:error}

\subsection{Localization inconsistency}
\begin{lem}
\label{lem:localizationerror}
Fix an $L>l$, if matrix $A$ is of bandwidth $l$, the difference caused by commuting  localization and bilinear product with $A$
\[
\Delta=A [C\circ \Dcut^L] A^T- [A CA^T]\circ \Dcut^L
\]
has nonzero entries only for indices $(i,j)$ with $|\dist(i,j)-L|\leq 2l$. 

If in addition, matrix $C$ follows an $(M,\Phi,L)$-localized structure, then
\[
|[\Delta]_{i,j}|\leq 
M \Phi(L-2l)\|A\|^2_\infty \calB_l^2, \quad   L-2l\leq \dist(i,j)\leq L.
\]
Recall that $\calB_l$ is the volume constant given by \eqref{eqn:calB}. 
\end{lem}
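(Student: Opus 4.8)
The plan is to analyze the two terms $A[C\circ\Dcut^L]A^T$ and $[ACA^T]\circ\Dcut^L$ entrywise, and exploit the bandwidth-$l$ structure of $A$ to see that they agree except in a thin band $|\dist(i,j)-L|\le 2l$. Write out the $(i,j)$ entry of each term: for the first,
\[
\bigl[A[C\circ\Dcut^L]A^T\bigr]_{i,j}=\sum_{p,q}[A]_{i,p}\,[C]_{p,q}\,\unit_{\dist(p,q)\le L}\,[A]_{j,q},
\]
and for the second,
\[
\bigl[[ACA^T]\circ\Dcut^L\bigr]_{i,j}=\unit_{\dist(i,j)\le L}\sum_{p,q}[A]_{i,p}\,[C]_{p,q}\,[A]_{j,q}.
\]
Since $A$ has bandwidth $l$, both sums range only over $p$ with $\dist(i,p)\le l$ and $q$ with $\dist(j,q)\le l$. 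By the triangle inequality, for such $p,q$ we have $|\dist(p,q)-\dist(i,j)|\le 2l$. Hence if $\dist(i,j)>L+2l$, then $\dist(p,q)>L$ for every contributing $(p,q)$, so $\unit_{\dist(p,q)\le L}=0$ and also $\unit_{\dist(i,j)\le L}=0$; both entries vanish. If $\dist(i,j)<L-2l$, then $\dist(p,q)<L$ for every contributing $(p,q)$, so $\unit_{\dist(p,q)\le L}=1$ and also $\unit_{\dist(i,j)\le L}=1$; the two sums are literally equal, so $[\Delta]_{i,j}=0$. This establishes the support claim: $[\Delta]_{i,j}=0$ unless $|\dist(i,j)-L|\le 2l$.

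For the size bound, I restrict attention to the band $L-2l\le\dist(i,j)\le L$ (the remaining half of the band, $L<\dist(i,j)\le L+2l$, is handled the same way, with the roles of which indicator is forced). On this band, $\Delta_{i,j}$ is a sum over contributing pairs $(p,q)$ of terms $\pm[A]_{i,p}[C]_{p,q}[A]_{j,q}$, where a term survives only when the two indicators disagree, i.e. exactly one of $\dist(p,q)\le L$, $\dist(i,j)\le L$ holds. In all such surviving terms $\dist(p,q)\ge L-2l$ (since $\dist(p,q)\ge\dist(i,j)-2l\ge L-4l$ is too weak; rather use that if the indicators disagree and $\dist(i,j)\le L$ then $\dist(p,q)>L\ge L-2l$, while if $\dist(i,j)>L$ is excluded on this sub-band). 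Using the localized structure of $C$, each such entry satisfies $|[C]_{p,q}|\le M\Phi(\dist(p,q))\le M\Phi(L-2l)$ because $\Phi$ is decreasing — with the caveat that one must also cover the case $\dist(p,q)>L$, where Definition \ref{defn:localstructure} gives $|[C]_{p,q}|\le M\Phi(L)\le M\Phi(L-2l)$. Bounding $|[A]_{i,p}|,|[A]_{j,q}|\le\|A\|_\infty$, and noting the number of pairs $(p,q)$ is at most $\calB_l^2$ by \eqref{eqn:calB}, we get $|[\Delta]_{i,j}|\le M\Phi(L-2l)\|A\|_\infty^2\calB_l^2$.

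The routine calculus is the entrywise expansion; the one place that needs care — and which I expect to be the main obstacle in writing it cleanly — is the triangle-inequality bookkeeping that pins down exactly which indicator is forced to be $0$ or $1$ on each piece of the band, and confirming that every term which actually contributes to $\Delta_{i,j}$ involves a $C$-entry at distance $\ge L-2l$ (or $>L$), so that monotonicity of $\Phi$ applies uniformly. Once that case analysis is organized, the bound follows by the trivial estimates above.
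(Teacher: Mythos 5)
Your proposal is correct and follows essentially the same route as the paper's proof: the same entrywise expansion of both terms, the same triangle-inequality argument pinning the nonzero entries to the band $|\dist(i,j)-L|\leq 2l$, and the same bound obtained by noting that on $L-2l\leq\dist(i,j)\leq L$ the surviving terms have $\dist(p,q)>L$, so $|[C]_{p,q}|\leq M\Phi(L)\leq M\Phi(L-2l)$, with at most $\calB_l^2$ contributing pairs. The only cosmetic issue is the slightly tangled parenthetical in your middle paragraph; the argument you actually settle on there is the right one.
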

\begin{proof}
By the matrix product rule, 
\begin{equation}
\label{tmp:deltaloc}
[\Delta]_{i,j}=\sum_{\dist(u,v)\leq L} [A]_{i,u} [C]_{u,v} [A]_{j,v}- \unit_{\dist(i,j)\leq L} \sum_{u,v} [A]_{i,u}  [C]_{u,v} [A]_{j,v}.
\end{equation}
If $\dist(i,j)>L+2l$, note that $[A]_{i,u}[A]_{j,v}\neq 0$ only when $\dist(i,u)\leq l, \dist(j,v)\leq l$. But for these terms,  by the triangular inequality $\dist(u,v)>L$, and they are not included in \eqref{tmp:deltaloc}. Therefore \eqref{tmp:deltaloc}$=0$. 

If $\dist(i,j)\leq L$, it is easy to verify that  $[\Delta]_{i,j}=-\sum_{\dist(u,v)> L} [A]_{i,u} [C]_{u,v} [A]_{j,v}$. Moreover, $[A]_{i,u}[A]_{j,v}\neq 0$ only when $\dist(i,u)\leq l, \dist(j,v)\leq l$. So if $\dist(i,j)<L-2l$, then by triangular inequality $\dist(u,v)<L$ and $[\Delta]_{i,j}=0$. 

Next, we assume $C$ follows an $(M,\Phi, L)$-localized structure. If $L<\dist(i,j)$, then  among the nonzero terms in $[\Delta]_{i,j}=\sum_{\dist(u,v)\leq  L} [A]_{i,u} [C]_{u,v} [A]_{j,v}$, $\dist(u,v)\geq L-2l$ by triangular inequality. This leads to  
\[
|[\Delta]_{i,j}|\leq \sum_{u,v:\dist(u,v)\leq L,\\ \dist(i,u)\leq l, \dist(j,u)\leq l} \|A\|_\infty^2 M\Phi(L-2l)\leq \calB_l^2  \|A\|_\infty^2 M \Phi(L-2l).
\]
Here we used that  
\[
\#\{ (u,v): \dist(u,v)\leq L, \dist(v, i), \dist(u,i)\leq l\}\leq \#\{ (u,v): \dist(v, i), \dist(u,i)\leq l\}=\calB^2_l.
\]
If $L-2l\leq \dist(i,j)\leq L$, then by $[\Delta]_{i,j}=-\sum_{\dist(u,v)> L} [A]_{i,u} [C]_{u,v} [A]_{j,v}$, 
\[
|[\Delta]_{i,j}|\leq M \Phi(L) \sum_{\dist(u,v)>L} |[A]_{i,u}||[A]_{j,v}|\leq M \Phi(L)\|A\|_\infty^2 \calB_l^2,
\] 
where we applied the inequality 
\[
\#\{ (u,v):  \dist(v, i), \dist(u,i)\leq l, \dist(u,v)>L\}\leq \#\{ (u,v): \dist(v, i), \dist(u,i)\leq l\}=\calB^2_l.
\]
In either case, we have the bound we claim, since $\Phi(L)\leq \Phi(L-2l)$. 
\end{proof}

\begin{proof}[Proof of Proposition \ref{lem:Kalmanlocalizationerror}]
Since Schur product is a linear operation, we can decompose the localization inconsistency as
\begin{align*}
\Delta_{loc}=&[A_n (I-\Khat_n H )][\Chat_n \circ \Dcut^L][(I-\Khat_n H )^TA^T_n]\\
&-\big[[A_n (I-\Khat_n H )]\Chat_n[(I-\Khat_n H )^TA^T_n]\big]\circ\Dcut^L\\
&+[\sigma_o^2A_n\Khat_n\Khat_n^TA_n^T +\Sigma_n]-[\sigma_o^2A_n\Khat_n\Khat_n^TA_n^T +\Sigma_n]\circ\Dcut^L
\end{align*}
Since both $\Khat_n$ and $\Sigma_n$ are of bandwidth at most $l$,  $A_n\Khat_n\Khat_n^TA_n^T$ has bandwidth at most $4l$ by triangular inequality. Since $L\geq 4l$, so 
\[
[\sigma_o^2 A_n\Khat_n\Khat_n^T A_n^T+\Sigma_n]=[\sigma_o^2A_n\Khat_n\Khat_n^T A_n^T+\Sigma_n]\circ\Dcut^L,
\]
In other words, $\Delta_{loc}$ is 
\[
[A_n (I-\Khat_n H )][\Chat_n \circ \Dcut^L][(I-\Khat_n H )^TA^T_n]
-\big[[A_n (I-\Khat_n H )]\Chat_n[(I-\Khat_n H )^TA^T_n]\big]\circ\Dcut^L,
\]
which can be applied by Lemma \ref{lem:localizationerror}. Next, we try to bound $\|A_n (I-\Khat_n H) \|_\infty$. Recall that $\|H\|=1$, $\|A_n\|\leq M_A$ and Lemma \ref{lem:l2byl1} b), 
\[
\|A_n (I-\Khat_n H) \|_\infty\leq \|A_n (I-\Khat_n H) \|\leq M_A \|I-\Khat_n H\|\leq M_A (1+\|\Khat_nH\|)
\]
In domain localization \eqref{eqn:Khat1}, $\Khat_n H$ has bandwidth $l$. To see this,  note  that
\begin{align}
\notag
[\Khat_n H]_{i,j}= [\Khat_n^iH]_{i,j}&=[\Chat^i_n H^T (\sigma_o^2 I_q+ H\Chat^i_n H^T)^{-1}H]_{i,j}\\
\label{tmp:KHentry}
&=\sum_{m,k}[\Chat^i_n]_{i,o_k} [(\sigma_o^2 I_q+ H\Chat^i_n H^T)^{-1}]_{k, m}\unit_{j=o_m}. 
\end{align}
Since $\Chat^i_n$ has nonzero entries only in $\calI_i\times \calI_i$, 
\[
[(\sigma_o^2 I_q+ H\Chat^i_n H^T)^{-1}]_{k, m}=\sigma_o^{-2}\unit_{k=m}\quad \text{if}\quad \dist(o_k,i)>l \text{  or  }\dist(o_m,i)>l. 
\]
Also $[\Chat^i_n]_{i,o_k}=0$ if $\dist(o_k, i)>l$. Therefore, $[\Khat_n H]_{i,j}=0$ if $\dist(i,j)>l$.

By Lemma \ref{lem:l2byl1} b), $\|\Khat_n H\|\leq \calB_l \|\Khat_n H\|_\infty $. Since the $i$-th  row of $\Khat_n H$ is the $i$-th row of $K^i_{n}H$, so by Lemma \ref{lem:l2byl1} b), 
\[
\|\Khat_n H\|_\infty\leq \max_i \{\|K^i_{n}H\|_\infty\}\leq \max_i \{\|K^i_{n}H\|\}.
\]
Moreover, by definition \eqref{eqn:Ki} and Lemma \ref{lem:l2byl1} a)
\[
\|K^i_{n}\|\leq \|\Chat^i_n\|\|(\sigma_o^2 I+H \Chat^i_n H^T)^{-1}\|\leq \sigma_o^{-2} \|\Chat^i_n\|\leq \sigma_o^{-2} \|\Chat^i_n\|_1. 
\]
Note that $\Chat^i_n$ has nonzero entries only in  $\calI_i\times \calI_i$, by Lemma \ref{lem:l2byl1}, 
\[
\|\Chat^i_n\|_1\leq \calB_l \|\Chat^i_n\|_\infty\leq \calB_l \|\Chat_n\|_\infty.
\] 
Moreover, since  $\Chat_n$ follows an $(M_n,\Phi, L)$ structure, $\|\Chat_n\|_\infty\leq M_n$. Summing up, the  domain localized Kalman gain can be bounded by
\[
\|A_n (I-\Khat_n H) \|_\infty\leq M_A (1+ \sigma_o^{-2} \calB_l M_n).
\]
Then by Lemma \ref{lem:localizationerror}, the localization inconsistency matrix is bounded entry-wise
\[
|[\Delta]_{i,j}|\leq M_n M_A^2 (1+ \sigma_o^{-2} \calB_l M_n)^2  \calB_l^2 \Phi(L-2l),
\]
while $|[\Delta]_{i,j}|=0$ if $|\dist(i,j)-L|>2l$. So there are at most $\calB_{L,l}=\max_i\#\{j, |\dist(i,j)-L|\leq 2l\}$ nonzero entries in each row.

As a consequence
\[
\|\Delta_{loc}\|\leq \|\Delta_{loc}\|_1\leq  M_n M_A^2 (1+ \sigma_o^{-2} \calB_l M_n)^2 \calB_l^2\calB_{L,l} \Phi(L-2l).
\]
\end{proof}

\subsection{Component information gain through filtering}
One of the fundamental properties in Kalman filter is that the assimilation of observation improves estimation. Mathematically, this can be represented by that the forecast covariance matrix dominates the posterior covariance matrix. Unfortunately, with LEnKF, this natural property, $\Chat_n\succeq (I-\Khat_n  H)\Chat_n (I-\Khat_n  H)^T+\sigma_o^2\Khat_n\Khat_n^T$,  may no longer hold. However, we can still show the dominance at the diagonal entries.
\begin{prop}
\label{prop:ivar}
The assimilation step lowers the variance at each component:
\[
[\Chat_n]_{i,i}\geq [(I-\Khat_n  H)\Chat_n (I-\Khat_n  H)^T+\sigma_o^2 \Khat_n\Khat_n^T]_{i,i},\quad i=1,\cdots,d.
\]
\end{prop}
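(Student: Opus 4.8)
The plan is to recognize the $(i,i)$ entry of the posterior covariance as the value of a one--dimensional ridge--regression (dual Kalman) functional evaluated at its own minimizer, and then use that $0$ is a feasible competitor. First I would isolate the $i$-th row of the localized gain. Since $\Khat_n=\sum_{i'}\bfe_{i'}\bfe_{i'}^T K^{i'}_n$, the $i$-th row of $\Khat_n$ is the $i$-th row of $K^i_n$; writing $g_i$ for its transpose, symmetry of $\Chat^i_n$ gives $g_i=(K^i_n)^T\bfe_i=(\sigma_o^2 I_q+H\Chat^i_n H^T)^{-1}H\Chat^i_n\bfe_i$. Expanding the quadratic form, the $(i,i)$ entry in question equals
\[
[(I-\Khat_n H)\Chat_n(I-\Khat_n H)^T+\sigma_o^2\Khat_n\Khat_n^T]_{i,i}=(\bfe_i-H^Tg_i)^T\Chat_n(\bfe_i-H^Tg_i)+\sigma_o^2\|g_i\|^2=:f(g_i),
\]
so the target is $f(g_i)\le f(0)=[\Chat_n]_{i,i}$.

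The key structural step is to show that $\bfe_i-H^Tg_i$ is supported inside the local domain $\calI_i$, which lets me swap $\Chat_n$ for $\Chat^i_n=\bfP_{\calI_i}\Chat_n\bfP_{\calI_i}$ in the first term. Using $\bfP_{\calI_i}\bfe_i=\bfe_i$ and the fact that $\Chat^i_n$ annihilates any index outside $\calI_i$, the vector $H\Chat^i_n\bfe_i$ has nonzero entries only at observation coordinates $k$ with $o_k\in\calI_i$; moreover $\sigma_o^2 I_q+H\Chat^i_n H^T$ is block diagonal with respect to the splitting of $\reals^q$ into observations inside vs.\ outside $\calI_i$, and it equals $\sigma_o^2 I$ on the outside block, so its inverse preserves the ``inside'' subspace. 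Hence $g_i$, and therefore $H^Tg_i$, is supported in $\calI_i$, so $\bfe_i-H^Tg_i=\bfP_{\calI_i}(\bfe_i-H^Tg_i)$ and
\[
f(g_i)=(\bfe_i-H^Tg_i)^T\Chat^i_n(\bfe_i-H^Tg_i)+\sigma_o^2\|g_i\|^2=:\widetilde f(g_i),\qquad \widetilde f(g):=(\bfe_i-H^Tg)^T\Chat^i_n(\bfe_i-H^Tg)+\sigma_o^2\|g\|^2.
\]

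Finally I would invoke the elementary fact that $g\mapsto\widetilde f(g)$ is a strictly convex quadratic (Hessian $2(\sigma_o^2 I_q+H\Chat^i_n H^T)\succ 0$) whose unique minimizer over all of $\reals^q$ is precisely $(\sigma_o^2 I_q+H\Chat^i_n H^T)^{-1}H\Chat^i_n\bfe_i=g_i$. Therefore $f(g_i)=\widetilde f(g_i)\le \widetilde f(0)=\bfe_i^T\Chat^i_n\bfe_i=[\Chat_n]_{i,i}$, the last equality again from $\bfP_{\calI_i}\bfe_i=\bfe_i$, which is exactly the claimed inequality. The main obstacle is the support argument in the middle step: without knowing that $g_i$ weights only observations within $\calI_i$, one cannot replace $\Chat_n$ by $\Chat^i_n$ inside the quadratic form, and the reduction to the genuinely variance--reducing Kalman problem (for which $0$ is an admissible competitor) collapses. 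Everything else is routine; note in particular that the argument uses only domain localization and the shape of $H$ in \eqref{sys:observation}, and needs no localized--structure hypothesis on $\Chat_n$.
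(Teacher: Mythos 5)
Your proposal is correct, and its backbone coincides with the paper's proof: both isolate the $i$-th row of the localized gain (equivalently, replace $\Khat_n$ by $K^i_n$ in the $(i,i)$ entry), and both hinge on the same support argument --- that the $i$-th row of $\Khat_n H$ weights only indices in $\calI_i$, which the paper establishes through the explicit entry formula for $\Khat^i_n H$ exactly as you do via the block-diagonal structure of $\sigma_o^2 I_q+H\Chat^i_n H^T$ --- in order to replace $\Chat_n$ by $\Chat^i_n$ in the quadratic form. The only divergence is the last step: the paper invokes the algebraic identity that the Joseph-form posterior with the exact gain for $\Chat^i_n$ equals $\Chat^i_n-\Chat^i_n H^T(\sigma_o^2 I_q+H\Chat^i_n H^T)^{-1}H\Chat^i_n\preceq \Chat^i_n$ and then reads off the diagonal, whereas you use the variational characterization of $g_i$ as the unique minimizer of the strictly convex functional $\widetilde f$ and compare against the competitor $g=0$. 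These are equivalent expressions of the optimality of $K^i_n$ for the local covariance $\Chat^i_n$: the identity yields the full matrix ordering in one stroke, while your ridge-regression argument is tailored to the diagonal entry but is slightly more self-contained, avoiding the Joseph-to-information-form computation. I see no gaps.
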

\begin{proof}
Recall that the $i$-th coordinate of $\Delta \Xhat^{(k)}_n$ is updated through the Kalman gain matrix $\Khat^i_n$. Therefore,
\[
[(I-\Khat_n  H)\Chat_n (I-\Khat_n  H)^T+\sigma_o^2\Khat_n\Khat_n^T]_{i,i}=[(I-\Khat^i_n  H)\Chat_n (I-\Khat^i_n  H)^T+\sigma_o^2\Khat^i_n(\Khat^i_n)^T]_{i,i}
\]
Moreover, in \eqref{tmp:KHentry} we have shown that $[\Khat^i_n H]_{i,j}\neq 0$ only when $\dist(i,j)\leq l$, so 
\[
[(I-\Khat^i_n  H)\Chat_n (I-\Khat^i_n  H)^T+\sigma_o^2\Khat^i_n(\Khat^i_n)^T]_{i,i}=[(I-\Khat^i_n  H)\Chat^i_n (I-\Khat^i_n  H)^T+\sigma_o^2\Khat^i_n(\Khat^i_n)^T]_{i,i}. 
\]
Note that the right side is the posterior Kalman covariance with the forecast covariance being $\Chat^i_n$. Therefore by 
\[
(I-\Khat^i_n  H)\Chat^i_n (I-\Khat^i_n  H)^T+\sigma_o^2 \Khat^i_n(\Khat^i_n)^T=
\Chat_n^i-\Chat_n^i H^T(\sigma_o^2 I_q+H \Chat_n^i H^T)^{-1} H\Chat_n^i
\preceq \Chat^i_n,
\]
we have
\[
[(I-\Khat_n  H)\Chat_n (I-\Khat_n  H)^T+\sigma_o^2\Khat_n\Khat_n^T]_{i,i}\leq [\Chat^i_n]_{i,i}=[\Chat_n]_{i,i}.
\]
\end{proof}

\subsection{Sampling error}
First, we have the following general integral lemma
\begin{lem}
\label{lem:intbypart}
If $Y$ is a nonnegative random variable that satisfies 
\[
\Prob(Y> M t)\leq 8d^2 \exp (-cK\min\{t, t^2)),\quad c>0, M\geq 1. 
\]
Then for any $\delta \in (0,1)$, if $K\geq \Gamma(M \delta^{-1}, d)$, where
\[
 \Gamma(x,d)=\max\{9x^2, \tfrac{24 }{ c} x, \tfrac{18 }{c} x^2 \log d\}.
\]
We have $\E Y\leq \delta$ and $\E Y^2\leq 2 M\delta$. 
\end{lem}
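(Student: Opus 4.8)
Both conclusions will follow from the layer-cake (tail-integral) representation of a moment together with a three-region decomposition of the resulting integral that mirrors the three terms defining $\Gamma$. For the first moment I would write $\E Y=\int_0^\infty \Prob(Y>s)\,ds$ and substitute $s=Mt$, giving $\E Y=M\int_0^\infty\Prob(Y>Mt)\,dt$; for the second moment, $\E Y^2=\int_0^\infty 2s\,\Prob(Y>s)\,ds=2M^2\int_0^\infty t\,\Prob(Y>Mt)\,dt$. It therefore suffices to show $\int_0^\infty\Prob(Y>Mt)\,dt\le\delta/M$ and $\int_0^\infty t\,\Prob(Y>Mt)\,dt\le\delta/M$, which would yield $\E Y\le\delta$ and $\E Y^2\le 2M^2(\delta/M)=2M\delta$ respectively. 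The two integrals are handled identically; the extra factor $t$ in the second only makes the Gaussian-regime primitive $\int t e^{-cKt^2}\,dt$ more elementary.

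Next I would fix a cutoff $a:=\delta/(3M)$, note $a<1$ (since $\delta<1\le M$), and split $\int_0^\infty=\int_0^a+\int_a^1+\int_1^\infty$, using that $\min\{t,t^2\}=t^2$ on $[0,1]$ and $=t$ on $[1,\infty)$. On $[0,a]$ I use only the trivial bound $\Prob(Y>Mt)\le1$, contributing $a=\delta/(3M)$ (resp.\ $a^2/2\le\delta/(3M)$). On $[a,1]$ I apply the hypothesis as $\Prob(Y>Mt)\le 8d^2e^{-cKt^2}$ and bound $t^2\ge at$ to dominate by $\int_a^\infty 8d^2 e^{-cKat}\,dt=\frac{8d^2}{cKa}e^{-cKa^2}$ (resp.\ by $\int_a^\infty 8d^2 t e^{-cKt^2}\,dt=\frac{4d^2}{cK}e^{-cKa^2}$); with $w:=cKa^2$, the demand that these be $\le a$ reduces to an inequality of the form $8d^2\lesssim w e^{w}$, which holds once $w$ is a suitable multiple of $\log d$, i.e.\ once $K$ is a suitable multiple of $\frac1c a^{-2}\log d=\frac9c(M/\delta)^2\log d$; this is supplied by the term $\frac{18}{c}(M/\delta)^2\log d$ of $\Gamma$ (the extra factor absorbing the $\log 8$ and the looseness of $t^2\ge at$). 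On $[1,\infty)$ I use $\Prob(Y>Mt)\le 8d^2e^{-cKt}$, so the contribution is $\frac{8d^2}{cK}e^{-cK}$ (resp.\ $\frac{8d^2(cK+1)}{(cK)^2}e^{-cK}$), which falls below $\delta/(3M)$ once $K$ exceeds suitable multiples of $(M/\delta)^2$ and $\frac1c(M/\delta)$ — the role of the terms $9(M/\delta)^2$ and $\frac{24}{c}(M/\delta)$. Summing the three regions gives the desired $\le\delta/M$ for each integral.

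The only genuine difficulty is the accounting of constants: no single estimate is hard, but one must partition the budget $\delta/M$ among the three regions so that all the resulting lower bounds on $K$ are simultaneously guaranteed by $K\ge\Gamma(M\delta^{-1},d)=\max\{9x^2,\frac{24}{c}x,\frac{18}{c}x^2\log d\}$ with $x=M/\delta$, uniformly over $\delta\in(0,1)$, over $d$, and over the absolute constant $c$ (which need not be small or large). A subordinate point is verifying that $8d^2\lesssim w e^w$ really is implied with room to spare by $w$ being a modest multiple of $\log d$, so that switching from $e^{-cKt^2}$ to $e^{-cKat}$ on $[a,1]$ costs nothing essential.
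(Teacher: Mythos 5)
Your proposal is correct and follows essentially the same route as the paper's proof: both rescale by $M$ (the paper sets $X=Y/M$ with $\epsilon=\delta/(3M)$, you substitute $s=Mt$ with cutoff $a=\delta/(3M)$), both use the layer-cake formula, bound the integral below the cutoff trivially by $1$, split the rest according to whether $\min\{t,t^2\}$ is $t^2$ or $t$, and match the three terms of $\Gamma$ to the resulting requirements on $K$. The only cosmetic difference is in the Gaussian region, where you linearize $t^2\ge at$ while the paper compares with the exact derivative $2cKt\,e^{-cKt^2}$ via $8\le 2\epsilon cKt$; both versions carry the same mild slack in absolute constants (implicitly assuming $\log d$ is not tiny), so nothing further is needed.
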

\begin{proof}
Let $\epsilon=\frac{\delta}{3M}$, and $X=Y/M$, we have $K\geq \max\{\epsilon^{-2}, \tfrac{8}{ c \epsilon}, \frac{2}{c\epsilon^2} \log d\}$, and 
\[
\Prob(X> t)\leq 8d^2 \exp (-cK\min\{t, t^2)),
\] 
We  will show that $\E X\leq 3\epsilon$ and $\E X^2\leq 6\epsilon$, which are equivalent to our claims. Recall the integration by part formula for nonnegative random variables, $\E X=\int^\infty_0 \Prob(X>x)dx$,
\begin{align*}
\E X&=\int^\epsilon_0 \Prob(X>x)dx+\int^\infty_\epsilon \Prob(X>x)dx\\
&\leq \epsilon +\int_{\epsilon}^\infty \Prob (X>t)dt\\
&\leq \epsilon+8\int_{1}^\infty  d^2\exp(-cKt)dt+8\int_{\epsilon}^1  d^2\exp(-cKt^2)dt\\
&\leq \epsilon+8\int_{\epsilon}^\infty  d^2\exp(-cKt)dt+8\int_{\epsilon}^\infty  d^2\exp(-cKt^2)dt.
\end{align*}
Note that with our requirement on $K$, $d^2\exp(-cK\epsilon )\leq 1$,
\[
\int_{\epsilon}^\infty  8d^2\exp(-cKt)dt=\frac{8d^2}{cK}\exp(-cK\epsilon)\leq \frac{8 }{ cK}\leq \epsilon.
\]
And for $t>\epsilon$, $8\leq 2\epsilon c K t$, so 
\[
\int_{\epsilon}^\infty 8\exp(-cKt^2)dt\leq 
\epsilon \int_{\epsilon}^\infty   2cKt\exp(-cKt^2)dt=\epsilon\exp(-cK\epsilon^2)\leq \epsilon. 
\]
As for $\E X^2$, we again apply the integration by part formula
\begin{align*}
\E X^2&=\int^\infty_0 2t\Prob(X\geq t)dt\\
&\leq 2\epsilon+\int^\infty_\epsilon 2t \Prob(X\geq t)dt\\ 
&\leq 2\epsilon+8d^2 \int^\infty_\epsilon 2t\exp(-cK t)dt+8d^2 \int^\infty_\epsilon 2t\exp(-cK t^2)dt\\
&=2\epsilon+16d^2 \exp(-cK\epsilon)(\tfrac{\epsilon}{cK}+\tfrac{1}{c^2K^2})+\tfrac{8d^2}{cK} \exp(-cK \epsilon^2)\\
&\leq 2\epsilon+\tfrac{16 \epsilon}{cK}+\tfrac{16}{c^2K^2}+\tfrac{8}{cK}\leq 6\epsilon.
\end{align*}
We used $K\geq \max\{\epsilon^{-2}, \tfrac{8}{ c \epsilon}, \frac{2}{c\epsilon^2} \log d\}$ in the last line. 
\end{proof}

\begin{cor}
\label{cor:concentration}
Under condition 1) of Theorem \ref{thm:main}, suppose  $\Chat_n$ follows  $(M_n,\Phi,L)$-localized structure. For any $\epsilon\in (0,1)$, if
\begin{enumerate}[a)]
\item $K>\Gamma(\calB_L\epsilon^{-1}, d)$, then the sampling error
\[
\E_n \|(\Chat_{n+1}-r\Riccati_n(\Chat_n))\circ \Dcut^L\|\leq \epsilon (\calB^2_l M_A^2M_n+M_\Sigma),
\] 
\item $K>\Gamma(rC\epsilon^{-1}, d)$ for any $C\geq 1$, then the entry-wise sampling error
\[
\E_n \|\Chat_{n+1}-r\Riccati_n(\Chat_n)\|_\infty \leq \epsilon C^{-1} \|\Riccati_n(\Chat_n)\|_\infty. 
\]
\[
\E_n \|\Chat_{n+1}-r\Riccati_n(\Chat_n)\|^2_\infty \leq \epsilon 2C^{-1} \|\Riccati_n(\Chat_n)\|^2_\infty. 
\]  
\end{enumerate}
\end{cor}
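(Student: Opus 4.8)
The plan is to recognize that, conditionally on the filtration $\mathcal{F}_n$, the forecast ensemble generated in \eqref{tmp:Delta} is exactly a sample of the type covered by Theorem \ref{thm:localconcen}, and then to pass from the subexponential tail estimates of that theorem to the moment estimates asked for by means of Lemma \ref{lem:intbypart}. Concretely, I would begin from
\[
\Delta \Xhat^{(k)}_{n+1}=\sqrt{r}\,A_n(I-\Khat_nH)\Delta \Xhat^{(k)}_n+\sqrt{r}\,A_n\Khat_n\zeta^{(k)}_n+\sqrt{r}\,\xi^{(k)}_{n+1},
\]
and, since $\Khat_n$ and the $\Delta\Xhat^{(k)}_n$ are $\mathcal{F}_n$-measurable while the $\zeta^{(k)}_n,\xi^{(k)}_{n+1}$ are Gaussian and independent of $\mathcal{F}_n$, set $a_k=\sqrt r\,A_n(I-\Khat_nH)\Delta\Xhat^{(k)}_n$ and $z_k=\sqrt r\,A_n\Khat_n\zeta^{(k)}_n+\sqrt r\,\xi^{(k)}_{n+1}\sim\mathcal{N}(0,\Sigma_z)$ with $\Sigma_z=r\sigma_o^2A_n\Khat_n\Khat_n^TA_n^T+r\Sigma_n$. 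Then conditionally on $\mathcal{F}_n$ the matrices $\Chat_{n+1}$ and $\Sigma_a=\tfrac1K\sum_k a_k\otimes a_k=rA_n(I-\Khat_nH)\Chat_n(I-\Khat_nH)^TA_n^T$ are precisely the $Z$ and $\Sigma_a$ of Theorem \ref{thm:localconcen}, and $\E_n\Chat_{n+1}=\Sigma_a+\Sigma_z=r\Riccati_n(\Chat_n)$, so that the object to be controlled is $Z-\E Z=\Chat_{n+1}-r\Riccati_n(\Chat_n)$.

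Next I would apply Theorem \ref{thm:localconcen} with the ambient probability replaced by the conditional probability given $\mathcal{F}_n$ — legitimate since conditionally on $\mathcal{F}_n$ the $a_k$ are deterministic and the $z_k$ are i.i.d.\ Gaussian. Part a) with $\bfD_L=\Dcut^L$ (so that $\|\Dcut^L\|_1=\calB_L$) gives, for every $t\ge 0$,
\[
\Prob\big(\|(\Chat_{n+1}-r\Riccati_n(\Chat_n))\circ\Dcut^L\|\geq\calB_L\,\sigma_{a,z}\,t\mid\mathcal{F}_n\big)\leq 8d^2e^{-cK\min\{t,t^2\}},
\]
and part b) gives the analogous bound for $\|\Chat_{n+1}-r\Riccati_n(\Chat_n)\|_\infty$ with threshold $\sigma_{a,z}\,t$; here $\sigma_{a,z}$ is computed from $\Sigma_a,\Sigma_z$ and is $\mathcal{F}_n$-measurable, hence may be taken out of $\E_n$. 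The crux is then an a priori bound on $\sigma_{a,z}$ in terms of the localization level $M_n$. Since $\Sigma_a,\Sigma_z$ are positive semidefinite with $\Sigma_a+\Sigma_z=r\Riccati_n(\Chat_n)$, each is $\preceq r\Riccati_n(\Chat_n)$; as a positive semidefinite matrix attains its largest absolute entry on the diagonal, $\sigma_{a,z}\leq r\|\Riccati_n(\Chat_n)\|_\infty=r\max_i[\Riccati_n(\Chat_n)]_{i,i}$, which suffices for b). For a) I would sharpen this: writing $\Riccati_n(\Chat_n)=A_nC'A_n^T+\Sigma_n$ with the positive semidefinite $C'=(I-\Khat_nH)\Chat_n(I-\Khat_nH)^T+\sigma_o^2\Khat_n\Khat_n^T$, Proposition \ref{prop:ivar} bounds the diagonal of $C'$, hence (again by positive semidefiniteness) all entries of $C'$, by $\max_i[\Chat_n]_{i,i}\leq M_n$; since $A_n$ has bandwidth $l$ and $\|A_n\|\le M_A$, expanding $[A_nC'A_n^T]_{i,i}$ over the at most $\calB_l$ neighbours of $i$ yields $\|\Riccati_n(\Chat_n)\|_\infty\leq\calB_l^2M_A^2M_n+M_\Sigma$, and hence a bound on $\sigma_{a,z}$ of the advertised shape.

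Finally, dividing each tail bound by $\sigma_{a,z}$ puts it in exactly the form required by Lemma \ref{lem:intbypart} — with $M=\calB_L$ in a) and $M$ a fixed multiple of $rC$ in b) (after the bound $\sigma_{a,z}\le r\|\Riccati_n(\Chat_n)\|_\infty$) — and invoking that lemma, whose proof carries over verbatim to the conditional expectation $\E_n$, with $\delta$ taken proportional to $\epsilon$ in a) and to $\epsilon C^{-1}\|\Riccati_n(\Chat_n)\|_\infty$ in b), produces $\E_nY\le\delta$ and $\E_nY^2\le 2M\delta$; multiplying back by $\sigma_{a,z}$ gives the stated first- and second-moment estimates under the thresholds $K>\Gamma(\calB_L\epsilon^{-1},d)$ and $K>\Gamma(rC\epsilon^{-1},d)$. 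The step I expect to be the main obstacle is the a priori bound on $\sigma_{a,z}$ for part a): one must avoid a crude operator-norm bound on $\Riccati_n(\Chat_n)$ (which could grow with the ill-conditioning of the ensemble) and instead use the information-gain inequality of Proposition \ref{prop:ivar} together with the bandwidth of $A_n$ and of $\Khat_nH$ — the latter established inside the proof of Proposition \ref{lem:Kalmanlocalizationerror} — so that the bound depends on the model only through $M_n,M_A,\calB_l,M_\Sigma$. Everything else is routine bookkeeping: tracking the absolute constant $c$ through the union bounds, and checking that the $\mathcal{F}_n$-conditional forms of Theorem \ref{thm:localconcen} and Lemma \ref{lem:intbypart} are valid, which is immediate.
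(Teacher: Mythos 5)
Your proposal follows essentially the same route as the paper's own proof: condition on $\mathcal{F}_n$ to apply Theorem \ref{thm:localconcen} with $a_k=\sqrt r A_n(I-\Khat_nH)\Delta\Xhat^{(k)}_n$, $z_k=\sqrt r A_n\Khat_n\zeta^{(k)}_n+\sqrt r\xi^{(k)}_{n+1}$, bound $\sigma_{a,z}\le r\|\Riccati_n(\Chat_n)\|_\infty$ via positive semidefiniteness, control $\|\Riccati_n(\Chat_n)\|_\infty\le \calB_l^2M_A^2M_n+M_\Sigma$ through Proposition \ref{prop:ivar} and the bandwidth of $A_n$, and convert the tail bounds to moment bounds with Lemma \ref{lem:intbypart} using $\delta=\epsilon$ and $\delta=\epsilon C^{-1}$. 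The only deviations are harmless bookkeeping of the factor $r$ in the thresholds, a point on which the paper itself is equally loose.
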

\begin{proof}
We apply Theorem \ref{thm:localconcen} with
\[
a_k=\sqrt{r} A_n (I-\Khat_nH) \Delta \Xhat^{(k)}_n,\quad z_k=\sqrt{r} A_n\Khat_n \zeta^{(k)}_n+ \sqrt{r} \xi^{(k)}_{n},
\]
and $\bfD_L=\Dcut^L$. Then 
\[
\Sigma_a=r A_n (I-\Khat_nH) \Chat_n (I-\Khat_nH)^T A_n^T,\quad
\Sigma_z=r \sigma_o^2A_n\Khat_n \Khat_n^T A_n^T+r\Sigma_n.
\]
Note that $\Sigma_a\preceq \Sigma_a+\Sigma_z$ and $\Sigma_z\preceq \Sigma_a+\Sigma_z=r\Riccati_n(\Chat_n)$, where recall
\[
\Riccati_n(\Chat_n)=A_n Q_n A_n^T+\Sigma_n,\quad Q_n:=(I-\Khat_n  H)\Chat_n (I-\Khat_n  H)^T+\sigma_o^2\Khat_n\Khat_n^T.
\]
Therefore
\[
\sigma_{a,z}\leq r\max_{i,j} \{[\Sigma_a]_{i,i}, [\Sigma_a]^{1/2}_{i,i}[\Sigma_z]^{1/2}_{j,j}\}
\leq r\max_{i} [\Sigma_a+\Sigma_z]_{i,i}= r\|\Riccati_n(\Chat_n)\|_\infty.
\]
Moreover, since $Q_n$ is positive semidefinite (PSD), so 
\[
\|Q_n\|_\infty= \max_{i,j}|[Q_n]_{i,j}|\leq \sqrt{\max_{i}[Q_n]_{i,i} \max_{j}[Q_n]_{j,j}}=\max_{i}[Q_n]_{i,i}\leq \|Q_n\|_\infty.
\] 
Moreover, by Proposition \ref{prop:ivar},
\[
[Q_n]_{i,i}\leq [\Chat_n]_{i,i}\leq M_n. 
\]
Since $\Riccati_n(\Chat_n)$ is PSD, and by Lemma \ref{lem:l2byl1} $\|A_n\|_\infty\leq \|A_n\|\leq M_A$, 
\begin{align*}
\|\Riccati_n(\Chat_n)\|_\infty \leq \max_{i}[\Riccati_n(\Chat_n)]_{i,i}&= \max_{i}\left\{[\Sigma_n]_{i,i}+\sum_j [A_n]_{i,j}[Q_n]_{j,k} [A_n]_{i,k}\right\}\\
&\leq M_A^2 \calB^2_l  M_n+M_\Sigma. 
\end{align*}
Apply Theorem \ref{thm:localconcen}, since $\|\Dcut^L\|_1=\max_i \sum_{j: \dist(i,j)<L} 1=\calB_L$,  we have that
\[
\Prob_n(\|(\Chat_{n+1}-r\Riccati_n(\Chat_n))\circ \Dcut^L\|/ \|\Riccati_n(\Chat_n)\|_\infty>r \calB_L t)
\leq 8d^2\exp(-cK\min\{t, t^2\}). 
\]
\[
\Prob_n(\|\Chat_{n+1}-r\Riccati_n(\Chat_n)\|_\infty/ \|\Riccati_n(\Chat_n)\|_\infty> r t)
\leq 8d^2\exp(-c K\min\{t, t^2\}). 
\]
$\Prob_n$ denotes the probability conditioned on $\mathcal{F}_n$. Apply Lemma \ref{lem:intbypart} with the both of them, but using  $\delta=\epsilon$ for the first inequality and $\delta=\epsilon C^{-1}$ for the second, we have our claimed results.
\end{proof}

\subsection{Error analysis}
Next, we proceed to prove Theorem \ref{thm:main}.
\begin{proof}[Proof of Theorem \ref{thm:main}]
For each time $n$, let $r_n$ be the smallest number such that the following hold,
\[
\E_{S} \ehat_n \otimes\ehat_n\preceq r_n(\Chat_n\circ\Dcut^L+ \rho I_d),\quad r_n\geq 1. 
\]
We will try to find a recursive upper bound of $r_{n+1}$ in term of  $r_n$.

\noindent\textbf{Step 1: tracking the filter error.} 
Recall that the forecast error at time $n+1$ is provided by the \eqref{tmp:errorupdate}, and its covariance conditioned on sample noise realization is 
\begin{align*}
\E_{S} \ehat_{n+1}\otimes\ehat_{n+1}&=A_n [(I-\Khat_{n} H)\E_S \ehat_n\otimes \ehat_n (I-\Khat_{n} H)^T+\sigma_o^2\Khat_{n}\Khat_{n}^T]A_n^T+\Sigma_n\\
&\preceq r_n A_n (I-\Khat_{n} H) (\Chat_n\circ \Dcut^L) (I-\Khat_{n} H)^TA_n^T\\
&\quad+ [\sigma_o^2 A_n \Khat_{n}\Khat_{n}^TA_n^T+r_n\rho A_n(I-\Khat_{n} H)(I-\Khat_{n} H)^T A_n^T+\Sigma_n].
\end{align*}
By Young's inequality $(a+b) (a+b)^T\preceq 2a a^T+2 b b^T$, and that $HH^T=I_q$,
\begin{align*}
A_n(I-\Khat_{n} H)(I-\Khat_{n} H)^T A_n^T
&\leq 2 (A_n A_n^T+A_n\Khat_n H H^T\Khat^T_nA_n^T) \\
&\leq 2 (A_n A_n^T+A_n\Khat_n \Khat^T_nA_n^T) .
\end{align*}
Moreover,  $A_n A_n^T\preceq M^2_A I_d\preceq \frac{M^2_A}{m_\Sigma}  \Sigma_n$. Denote $D_\Sigma=\max\{\frac{2M^2_A}{m_\Sigma}, \frac{2}{\sigma_o^2} \}$, then
\[
 A_n(I-\Khat_{n} H)(I-\Khat_{n} H)^T A_n^T
\preceq  D_\Sigma  (\Sigma_n+\sigma_o^2 A_n \Khat_{n} \Khat_n^T A_n^T).
\]
Furthermore, 
\begin{align*}
&\E_{S} \ehat_{n+1}\otimes \ehat_{n+1}\preceq r_n A_n (I-\Khat_{n} H) (\Chat_n\circ \Dcut^L) (I-\Khat_{n} H)^TA_n^T+ (1+r_n \rho D_\Sigma)(\sigma_o^2 A_n \Khat_{n}\Khat_{n}^TA_n^T+\Sigma_n).
\end{align*}
Recall that $\Riccati_n'(\Chat_n)$ in \eqref{tmp:realerror} is 
\[
\Riccati_n'(\Chat_n) =A_n (I-\Khat_{n} H) (\Chat_n\circ \Dcut^L) (I-\Khat_{n} H)^TA_n^T+\sigma_o^2 A_n \Khat_{n}\Khat_{n}^T A_n^T+ \Sigma_n.
\]
Therefore
\[
\E_{S} \ehat_{n+1}\otimes \ehat_{n+1}\preceq \max\{1,r_n/r, (1+r_n \rho D_\Sigma)/r\}\cdot r\Riccati'_n(\Chat_n).
\]
With our condition 2) on $\rho$, 
\[
(1+r_n \rho D_\Sigma)/r
\leq \tfrac{1}{r}+ \tfrac{r-1}{r} \tfrac{r_n}{r}\leq \max\{1, r_n/r\},
\]
so $\E_{S} \ehat_{n+1}\otimes \ehat_{n+1}\preceq \max\{1, r_n/r\} r\Riccati_n'(\Chat_n)$.\\
\newline
\noindent\textbf{Step 2: difference between filter error covariance and its estimate.} \\
The EnKF estimates the error covariance by the ensemble covariance $\Chat_{n+1}$. Its conditional expectation is 
\begin{equation}
\label{tmp:decomp}
\E_{n} \Chat_{n+1}=r\Riccati_n(\Chat_n)=r(A_n (I-\Khat_{n} H) \Chat_n (I-\Khat_{n} H)^TA_n^T+ \sigma_o^2A_n\Khat_{n}\Khat_{n}^TA_n^T+ \Sigma_n).
\end{equation}
In order to establish a control of the new filter error using localized ensemble covariance matrix, consider the difference 
\[
\Chat_{n+1}\circ \Dcut^L- r\Riccati_n'(\Chat_n)=(\Chat_{n+1}\circ \Dcut^L-\E_{n}\Chat_{n+1}\circ \Dcut^L)+r(\Riccati_n(\Chat_n)\circ\Dcut^L- \Riccati_n'(\Chat_n)). 
\]
The first part of \eqref{tmp:decomp} is the error caused by  sampling. By Corollary \ref{cor:concentration}, if we denote
\[
\mu_{n+1}:=\|\Chat_{n+1}\circ \Dcut^L-\E_{n}\Chat_{n+1}\circ\Dcut^L\|
\]
then $\E_{n} \mu_{n+1}\leq (\calB^2_l M_A^2M_n+M_\Sigma)  \delta/r $ if $K$ satisfies condition 5). 

The second part of \eqref{tmp:decomp} is the localization inconsistency.  By Proposition \ref{lem:Kalmanlocalizationerror}, we have 
\[
\|\Riccati_n (\Chat_n)\circ\Dcut^L- \Riccati_n'(\Chat_n)\| \leq   M_n M_A^2 (1+ \sigma_o^{-2} \calB_l M_n)^2 \calB_l^2\calB_{L,l} \Phi(L-2l)=:\nu_{n+1}. 
\]
Summing these two parts up, 
\[
r\Riccati_n'(\Chat_n)\preceq \Chat_{n+1}\circ \Dcut^L +r (\mu_{n+1}+\nu_{n+1})I_d. 
\]
Then
\[
r\Riccati_n'(\Chat_n)\preceq (1+ \tfrac{r}{\rho}(\mu_{n+1}+\nu_{n+1}) )(\Chat_{n+1}\circ \Dcut^L+\rho I_d). 
\]
Recall that in step 1, we have $\E_{S} \ehat_{n+1}\ehat_{n+1}^T\preceq \max\{1,\tfrac{r_{n}}{r}\}r\Riccati_n'(\Chat_n)$, so if we let $r_{n+1}$ be the smallest number such that 
\[
\E_S \ehat_{n+1}\otimes \ehat_{n+1}\preceq r_{n+1}(\Chat_{n+1}\circ \Dcut^L+\rho I_d),\quad r_{n+1}\geq 1,
\]
then 
\begin{equation}
\label{tmp:rn+1}
r_{n+1}\leq \max\{1,\tfrac{r_{n}}{r}\}(1+ \tfrac{r}{\rho}(\mu_{n+1}+\nu_{n+1})).
\end{equation}

\noindent\textbf{Step 3: long time stability analysis. } Since $r_*\leq r$ 
\[
\max\{0, \log (r_n/r)\}\leq \max\{0, \log (r_n/r_*)\}\leq \log r_n-\log r_* \unit_{r_n\geq r_*}. 
\]
Taking the logarithm of \eqref{tmp:rn+1}, and using that $\log(1+x+y^3)\leq x+2y$ for all $x,y\geq 0$, 
\begin{align*}
\log r_{n+1}&\leq \log r_n-\log r_*  \unit_{r_n\geq r_*}+ \log (1+ \tfrac{r}{\rho}(\mu_{n+1}+\nu_{n+1}))\\
&\leq \log r_n-\log r_*\unit_{r_n\geq r_*}+\tfrac{r}{\rho}\mu_{n+1}+ 2(\tfrac{r}{\rho}\nu_{n+1})^{1/3}.
\end{align*}
Sum this inequality from $n=0,\ldots, {T-1}$, we have
\[
\log r_*\sum_{n=0}^{T-1} \unit_{r_n\geq r_*} \leq \log r_0-\log r_{T}+\sum_{n=0}^{T-1}  (\tfrac{r}{\rho}\mu_{n+1}+ 2(\tfrac{r}{\rho}\nu_{n+1})^{1/3}).
\]
Because $r_{T}\geq 1$, 
\[
\sum_{n=0}^{T-1} \unit_{r_n\geq r_*}\leq \frac{\log r_0}{\log r_*}+\frac{1}{\log r_*}\sum_{n=0}^{T-1} (\tfrac{r}{\rho}\mu_{n+1}+ 2(\tfrac{r}{\rho}\nu_{n+1})^{1/3}). 
\]
Take expectation,
\begin{equation}
\label{tmp:sumprob}
\sum_{n=0}^{T-1} \Prob(r_n\geq r_*)=\E \sum_{n=0}^{T-1} \unit_{r_n\geq r_*}\leq \frac{\log r_0}{\log r_*}+\frac{1}{\log r_*} \sum_{n=0}^{T-1}   ( \tfrac{r}{\rho}\E \mu_{n+1}+ 2\E (\tfrac{r}{\rho}\nu_{n+1})^{1/3}).
\end{equation}
\noindent\textbf{Step 4: Upper bounds for \eqref{tmp:sumprob}.}  Recall  in step 2 we have that
\[
\sum_{n=0}^{T-1}    \tfrac{r}{\rho}\E \mu_{n+1}
\leq \sum_{n=0}^{T-1} \tfrac{\delta}{\rho} (\calB^2_l M_A^2\E M_n+M_\Sigma).
\]
Next, note the following holds because $\calB_l\geq 1$
\[
\nu_{n+1}=M_A^2\calB_l^2\calB_{L,l} \Phi(L-2l)  M_n  (1+ \sigma_o^{-2} \calB_l M_n)^2 \leq M^2_A \sigma_o^2\calB_l^3 \calB_{L,l} \Phi(L-2l) (1+ \sigma_o^{-2} \calB_l M_n)^3. 
\]
With condition 4),  we have
\[
M^{2/3}_A  \calB_{L,l}^{1/3}\calB^2_l\Phi^{1/3}(L-2l)\leq \delta,
\]
so
\[
\E \nu_{n+1}^{1/3}\leq \E M^{2/3}_A \sigma_o^{2/3} \calB_{L,l}^{1/3}\calB_l\Phi^{1/3}(L-2l)(1+ \sigma_o^{-2} \calB_l M_n)\leq \delta (\sigma_o^{2/3}+\sigma_o^{-1/3}\E M_{n}).
\]
In conclusion, 
\begin{align*}
2\E (\tfrac{r}{\rho}\nu_{n+1})^{1/3}  &\leq 2\delta \tfrac{r^{1/3}}{\rho^{1/3}}(\sigma_o^{2/3}+\sigma_o^{-1/3}\E M_{n}).
\end{align*}
Plug  these bounds to \eqref{tmp:sumprob}, and then use \eqref{eqn:stabloc}
\begin{align*}
\frac{1}{T}\sum_{n=0}^{T-1} \Prob(r_n\geq r_*)&
\leq \frac{r_0}{T \log r_*}+\frac{\delta}{T \log r_*} (\rho^{-1}\calB_l^2 M_A^2+\tfrac{2r^{1/3}}{(\rho\sigma_o)^{1/3}})\sum_{n=0}^{T-1}\E M_n+\frac{\delta}{ \log r_*}(\rho^{-1} M_\Sigma+2\tfrac{r^{1/3}}{\rho^{1/3}}\sigma^{2/3}_o)\\
&\leq \frac{r_0}{T \log r_*}+\frac{\delta (B_0\|C_0\|+D_0)}{T \log r_*}(\rho^{-1} \calB_l^2 M_A^2+\tfrac{2r^{1/3}}{(\rho\sigma_o)^{1/3}})\\
&\quad +\frac{\delta}{\log r_*}\left((\rho^{-1} \calB_l^2 M_A^2+\tfrac{2r^{1/3}}{(\rho\sigma_o)^{1/3}})M_0+\rho^{-1} M_\Sigma+2\tfrac{r^{1/3}}{\rho^{1/3}}\sigma^{2/3}_o  \right).
\end{align*}
For our result, simply notice that 
\[
r_n\leq r_*\quad\Leftrightarrow\quad \E_{S} \ehat_n\otimes\ehat_n\preceq  r_*(\Chat_n+\rho I_d). 
\]
\end{proof}

\section{Localized covariance for linear LEnKF systems}
\label{sec:closeneighbor}
As discussed in the introduction, the existence of a localized covariance  structure is often assumed in practice to motivate the localization technique. Our result, Theorem \ref{thm:main}, shows that such a structure indeed can guarantee estimated performance, assuming the parameters and sample size are properly tuned.  Then it is  natural to ask when does a stable localized structure exist. This is an interesting and important question by itself, but to answer it for general signal-observation systems with rigorous proof is beyond the scope of this paper.  Here we demonstrate how to verify a stable localized covariance for simple linear models.

\subsection{Localized covariance propagation with weak local interactions}
As discussed in Theorem \ref{thm:main}, we require $A_n$ to be of a short bandwidth $l$. In other words, interaction in one time step exists only for components of distance  $l$ apart. When $l=1$, this type of interaction is often called nearest neighbor interaction, and it includes many statistical physics models with proper spatial discretization.

Generally speaking,  localized covariance is  formed  through  weak local interactions. With  linear dynamics described by $A_n$, one way to enforce a weak local interaction is through \eqref{aspt:weakinter}. We will show in this subsection that weak local interaction propagates a localized covariance structure of form $[\Chat_n]_{i,j}\propto \lambda_A^{\dist(i,j)}$, from diagonal entries of the covariance matrix to entries further away from diagonal. 

To describe the state of localization in  covariance matrices $\Chat_n$ and $C_n$, we define the following quantities 
\begin{equation}
\label{eqn:loclevel}
\Mhat_{n,l}=\max_{i,j}\left\{ |[\Chat_n]_{i,j}| \lambda_A^{-\dist(i,j)\wedge l}\right\},\quad M_{n,l}=\max_{i,j}\left\{ |[C_n]_{i,j}| \lambda_A^{-\dist(i,j)\wedge l}\right\}.
\end{equation}
Then clearly, the forecast covariance matrices follow the $(M_n, \lambda^{x}_A, L)$ localized structure with $M_n=\Mhat_{n,L}$. The goal of this section is to show that $\Mhat_{n,L}$ is a stable stochastic sequence. 

The following  properties hold immediately because the matrices involved are PSD.
\begin{lem}
\label{lem:basicloc}
Given positive semidefinite (PSD) matrices $C_n, \Chat_n$, define $M_{n,l}, \Mhat_{n,l}$ as in \eqref{eqn:loclevel}, we have $\Mhat_{n,0}=\max_{i} [\Chat_n]_{i,i}$, 
\[
\Mhat_{n,0}\leq \Mhat_{n,1}\leq \cdots\leq \Mhat_{n,k} \leq \Mhat_{n,0}\lambda_A^{-k}.
\]
The same properties also hold for $M_{n,k}$ as well. 
\end{lem}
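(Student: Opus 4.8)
The plan is to reduce everything to one elementary fact: for a positive semidefinite matrix $A$, the largest absolute entry is attained on the diagonal, i.e. $\max_{i,j}|[A]_{i,j}|=\max_i [A]_{i,i}$. This follows by looking at the $2\times 2$ principal submatrix indexed by $\{i,j\}$: positive semidefiniteness forces $[A]_{i,j}^2\le [A]_{i,i}[A]_{j,j}$, hence $|[A]_{i,j}|\le \max_i[A]_{i,i}$, while the diagonal entries are themselves nonnegative, so equality of the two maxima holds.

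First I would settle the case $l=0$. Since $\dist(i,j)\ge 0$ we have $\dist(i,j)\wedge 0=0$, so $\lambda_A^{-\dist(i,j)\wedge 0}=1$ and therefore $\Mhat_{n,0}=\max_{i,j}|[\Chat_n]_{i,j}|$. Applying the PSD fact to $\Chat_n$ gives $\Mhat_{n,0}=\max_i[\Chat_n]_{i,i}$, which is the first claim.

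Next, for the monotonicity $\Mhat_{n,k}\le\Mhat_{n,k+1}$, I would use that $\lambda_A<1$, so $\lambda_A^{-1}>1$ and $m\mapsto\lambda_A^{-m}$ is nondecreasing in $m$; since $\dist(i,j)\wedge k\le\dist(i,j)\wedge(k+1)$ for every pair $(i,j)$, each term $|[\Chat_n]_{i,j}|\lambda_A^{-\dist(i,j)\wedge k}$ is dominated by the corresponding term with $k$ replaced by $k+1$, and taking the maximum over $(i,j)$ preserves the inequality. For the upper bound $\Mhat_{n,k}\le\Mhat_{n,0}\lambda_A^{-k}$, I would combine $\dist(i,j)\wedge k\le k$, which gives $\lambda_A^{-\dist(i,j)\wedge k}\le\lambda_A^{-k}$, with $|[\Chat_n]_{i,j}|\le\Mhat_{n,0}$ from the $l=0$ case, so that every term in the max defining $\Mhat_{n,k}$ is at most $\Mhat_{n,0}\lambda_A^{-k}$; taking the max finishes it.

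Finally, the verbatim same argument applied to the PSD matrix $C_n$ yields the corresponding chain of inequalities for $M_{n,k}$. There is no real obstacle here; the only points requiring care are the direction of the inequality induced by $\lambda_A<1$ (negative powers of $\lambda_A$ are $\ge 1$ and increasing in the exponent) and the fact that one must invoke positive semidefiniteness, not merely symmetry, to place the maximal-modulus entry on the diagonal.
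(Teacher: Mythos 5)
Your proposal is correct and follows essentially the same route as the paper: the PSD (Cauchy--Schwarz) bound $|[\Chat_n]_{i,j}|\leq \sqrt{[\Chat_n]_{i,i}[\Chat_n]_{j,j}}$ to place the maximal entry on the diagonal, monotonicity of $\dist(i,j)\wedge k$ in $k$ together with $\lambda_A^{-1}>1$ for the chain of inequalities, and $\lambda_A^{-\dist(i,j)\wedge k}\leq \lambda_A^{-k}$ for the final bound. No gaps; the argument transfers verbatim to $M_{n,k}$ as you note.
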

\begin{proof}
Recall that $[\Chat_n]_{i,j}$ is the ensemble covariance, so for $i\neq j$
\[
 |[\Chat_n]_{i,j}| \leq \sqrt{ |[\Chat_n]_{i,i}|  |[\Chat_n]_{j,j}|}\leq \max_{i} [\Chat_n]_{i,i}.
 \]
 Therefore 
 \[
\Mhat_{n,0}=\max_{i,j} |[\Chat_n]_{i,j}|=\max_{i} [\Chat_n]_{i,i}.
\]
The monotonicity of $\Mhat_{n,k}$ in $k$ is quite obvious since $\dist(i,j)\wedge k\leq \dist(i,j)\wedge (k+1)$, and 
\[
\Mhat_{n,k}=\max_{i,j} \left\{|[\Chat_n]_{i,j}| \lambda_A^{-\dist(i,j)\wedge k}\right\}\leq \lambda_A^{-k}\max_{i,j} |[\Chat_n]_{i,j}|.
\]
\end{proof}

Next, we investigate how does the forecast step change the state of localization.
\begin{prop}
\label{prop:localforecast}
Suppose $\Sigma_n=\sigma_\xi^2 I_d$  and the linear dynamics admits a weak local interaction satisfying \eqref{aspt:weakinter}, the forecast step propagates the localization in covariance. In particular, given any covariance matrix $C_n$, and let $\Chat_{n+1}= A_n C_n A_n^T+\Sigma_n$, then  the localization states described by \eqref{eqn:loclevel} follows
\[
\Mhat_{n+1,0}\leq \lambda^2_A M_{n,0}+\sigma_\xi^2, 
\]
\[
\Mhat_{n+1,k}\leq \max\{\lambda^2_A M_{n,k}, \Mhat_{n+1,0}\},
\]
\[
\Mhat_{n+1,k+1}\leq \max\{\lambda_A M_{n,k}, \Mhat_{n+1,0}\}.
\]
\end{prop}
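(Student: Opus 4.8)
The plan is to work entrywise. Writing out the matrix product, for any $i,j$ we have
\[
[\Chat_{n+1}]_{i,j}=\sum_{u,v}[A_n]_{i,u}[C_n]_{u,v}[A_n]_{j,v}+\sigma_\xi^2\unit_{i=j},
\]
so the additive noise only touches the diagonal. Since $C_n$ is PSD, $\Chat_{n+1}=A_nC_nA_n^T+\sigma_\xi^2 I_d$ is PSD as well, and Lemma \ref{lem:basicloc} gives $\Mhat_{n+1,0}=\max_i[\Chat_{n+1}]_{i,i}$. Hence all three claims reduce to: (i) a bound on each diagonal entry $[\Chat_{n+1}]_{i,i}$, and (ii) a bound on $|[\Chat_{n+1}]_{i,j}|\lambda_A^{-\dist(i,j)\wedge l}$ for $i\neq j$; for $i=j$ the latter quantity equals $[\Chat_{n+1}]_{i,i}\le\Mhat_{n+1,0}$ automatically, which is exactly why $\Mhat_{n+1,0}$ appears in the maxima on the right of the second and third inequalities.

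The core estimate is a single inequality obtained from the triangle inequality together with the truncation $x\mapsto x\wedge k$. First I would record the elementary fact that for all indices and all $k\ge0$,
\[
\dist(u,v)\wedge k\ \ge\ \dist(i,j)\wedge k-\dist(i,u)-\dist(j,v),
\]
which is immediate by splitting on whether $\dist(u,v)\ge k$ (then the left side is $k\ge\dist(i,j)\wedge k$) or $\dist(u,v)<k$ (then use $\dist(u,v)\ge\dist(i,j)-\dist(i,u)-\dist(j,v)$). Since $0<\lambda_A<1$ this gives $\lambda_A^{\dist(u,v)\wedge k}\le\lambda_A^{\dist(i,j)\wedge k}\lambda_A^{-\dist(i,u)}\lambda_A^{-\dist(j,v)}$. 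Feeding in $|[C_n]_{u,v}|\le M_{n,k}\lambda_A^{\dist(u,v)\wedge k}$ (from the definition \eqref{eqn:loclevel}) and applying \eqref{aspt:weakinter} once in the $u$-sum and once in the $v$-sum yields, for every $i,j$,
\[
\Big|[\Chat_{n+1}]_{i,j}-\sigma_\xi^2\unit_{i=j}\Big|\le M_{n,k}\lambda_A^{\dist(i,j)\wedge k}\prod_{w\in\{i,j\}}\Big(\sum_{u}|[A_n]_{w,u}|\lambda_A^{-\dist(w,u)}\Big)\le\lambda_A^2 M_{n,k}\lambda_A^{\dist(i,j)\wedge k}.
\]
Taking $k=0$ and $i=j$ (so $\dist(i,i)\wedge0=0$) gives $[\Chat_{n+1}]_{i,i}\le\lambda_A^2 M_{n,0}+\sigma_\xi^2$, hence the first claim after maximizing over $i$.

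For the second claim, the displayed estimate with $i\neq j$ gives $|[\Chat_{n+1}]_{i,j}|\lambda_A^{-\dist(i,j)\wedge k}\le\lambda_A^2 M_{n,k}$, while the $i=j$ terms contribute at most $\Mhat_{n+1,0}$; taking the maximum over all $(i,j)$ gives $\Mhat_{n+1,k}\le\max\{\lambda_A^2 M_{n,k},\Mhat_{n+1,0}\}$. The third claim is the same computation at level $k$, but then compared against $\dist(i,j)\wedge(k+1)$: since $\dist(i,j)\wedge(k+1)\le(\dist(i,j)\wedge k)+1$ always, for $i\neq j$ we get
\[
|[\Chat_{n+1}]_{i,j}|\lambda_A^{-\dist(i,j)\wedge(k+1)}\le\lambda_A^2 M_{n,k}\lambda_A^{(\dist(i,j)\wedge k)-(\dist(i,j)\wedge(k+1))}\le\lambda_A^2 M_{n,k}\lambda_A^{-1}=\lambda_A M_{n,k},
\]
and again the diagonal contributes at most $\Mhat_{n+1,0}$, giving $\Mhat_{n+1,k+1}\le\max\{\lambda_A M_{n,k},\Mhat_{n+1,0}\}$. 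I expect the only delicate point to be the bookkeeping in the geometric inequality above — in particular checking that the truncation $\wedge k$ interacts with the triangle inequality so that exactly the right power of $\lambda_A$ is produced (quadratic in the first two claims, only linear in the third because one power is "spent" to reach one step further from the diagonal); everything else is a direct substitution.
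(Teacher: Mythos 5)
Your proposal is correct and follows essentially the same route as the paper: the same entrywise expansion of $A_nC_nA_n^T$, the same use of \eqref{aspt:weakinter} twice after absorbing $\lambda_A^{\pm\dist(i,u)},\lambda_A^{\pm\dist(j,v)}$, the same triangle-inequality-with-truncation estimate, and the same treatment of the diagonal via Lemma \ref{lem:basicloc}. The only difference is presentational — you isolate the inequality $\dist(u,v)\wedge k\geq \dist(i,j)\wedge k-\dist(i,u)-\dist(j,v)$ and the one-step comparison $\dist(i,j)\wedge(k+1)\leq(\dist(i,j)\wedge k)+1$ explicitly, which the paper leaves implicit.
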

\begin{proof}
Note that $[\Chat_{n+1}]_{i,j}=[A_n C_n A_n^T]_{i,j}+\sigma_\xi^2 \unit_{i=j}$. Moreover 
\begin{align*}
|[A_{n}C_n&A_n^T]_{i,j}|\leq \sum_{m,m'} |[A_n]_{i,m} [A_n]_{j,m'} [C_n]_{m,m'}|\\
&\leq \sum_{m,m'} |[A_n]_{i,m}|\lambda_A^{-\dist(i,m)} |[A_n]_{j,m'}|\lambda_A^{-\dist(j,m')} M_{n,k} \lambda_A^{\dist(i,m)+\dist(j,m')+\dist(m,m')\wedge k}\\
&\leq \sum_{m,m'} |[A_n]_{i,m}|\lambda_A^{-\dist(i,m)} |[A_n]_{j,m'}|\lambda_A^{-\dist(j,m')} M_{n,k} \lambda_A^{\dist(i,j)\wedge k}\\
&=  M_{n,k} \lambda_A^{\dist(i,j)\wedge k} \left(\sum_{m} |[A_n]_{i,m}|\lambda_A^{-\dist(i,m)}\right)\left(\sum_{m} |[A_n]_{j,m}|\lambda_A^{-\dist(j,m)}\right),
\end{align*}
which by  \eqref{aspt:weakinter} is bounded by $\lambda_A^2 M_{n,k} \lambda_A^{\dist(i,j)\wedge k}$.

By Lemma \ref{lem:basicloc},
\[
\Mhat_{n+1,0}=\max_i [\Chat_{n+1}]_{i,i}\leq \lambda_A^2 M_{n,0}+\sigma_\xi^2.
\]
Moreover, 
\[
\Mhat_{n+1,k}=\max\left\{\max_{i\neq j} [\Chat_{n+1}]_{i,j}\lambda_A^{-\dist(i,j)\wedge k}, \max_i [\Chat_{n+1}]_{i,i}\right\}\leq \max\left\{\lambda_A^2 M_{n,k},  \max_i [\Chat_{n+1}]_{i,i}\right\}.
\]
\[
\Mhat_{n+1,k+1}=\max\left\{\max_{i\neq j} [\Chat_{n+1}]_{i,j}\lambda_A^{-\dist(i,j)\wedge (k+1)}, \max_i [\Chat_{n+1}]_{i,i}\right\}\leq \max\left\{\lambda_A M_{n,k},  \max_i [\Chat_{n+1}]_{i,i}\right\}.
\]
\end{proof}

\subsection{Preserving  a localized structure with sparse observations}
From now on, we require the observations to be sparse in the sense that $\dist(o_i, o_j)>2l$ for any $i\neq j$. Then for each location $i\in \{1,\cdots, d\}$, there is at most one location $o(i)\in\{o_1,\cdots, o_q\}$ such that $\dist(i,o(i))\leq l$. If such an $o(i)$ doesn't exist, we set $o(i)=nil$, the analysis step will not update it, and we will see the discussion for these components are trivial. 

With domain localization  and sparse observations,  the analysis step updates the information at  the $i$-th component using only the observation at $o(i)$. This significantly simplifies the formulation of $(H \Chat^i_n  H^T+ \sigma_o^2 I_q)^{-1}$, which is diagonal with entries $(\sigma_o^2+[\Chat_n]_{o_i, o_i})^{-1}$ in $\calI_i\times \calI_i$. As a result, the Kalman update matrix has entries 
\[
[\Khat_{n}H]_{i,j}=[\Khat_{n}^iH]_{i,j}=\begin{cases}\frac{[\Chat_n]_{i,o(i)}}{\sigma_o^2+ [\Chat_n]_{o(i),o(i)} },\quad &j=o(i);\\  0,  &\text{else}.\end{cases}
\]
In fact, if we apply the covariance localization scheme instead of domain localization, the Kalman gain remains the same in this setting. 

In below, we investigate how does the assimilation step change the state of localization.
\begin{prop}
\label{prop:localanalysis}
Given any covariance matrix $\Chat_n$, define $\Khat_n$ as the Kalman gain in \eqref{eqn:Khat1}, and let
\[
C_{n}= (I-\Khat_nH )\Chat_n(I-\Khat_n H)^T+\sigma_o^2 \Khat_n \Khat_n^T.
\] 
Define the  state of localization using \eqref{eqn:loclevel}. Then 
\[
M_{n,0}\leq \Mhat_{n,0},\quad M_{n,k}\leq \phi(\Mhat_{n,k})
\]
where
\[
\phi (M)= M(1+\sigma_o^{-2} M)^2 +\sigma_o^{-2} M^2.
\]
\end{prop}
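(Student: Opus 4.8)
The plan is to separate the statement into the diagonal bound $M_{n,0}\le\Mhat_{n,0}$, which follows immediately from earlier results, and the off-diagonal localization estimate $M_{n,k}\le\phi(\Mhat_{n,k})$, which I would obtain by writing out $[C_n]_{i,j}$ entry-by-entry and bounding each resulting term against $\Mhat_{n,k}$ times an appropriate power of $\lambda_A$. For the diagonal bound, note that $C_n=(I-\Khat_nH)\Chat_n(I-\Khat_nH)^T+\sigma_o^2\Khat_n\Khat_n^T$ is PSD because $\Chat_n$ is, so Lemma \ref{lem:basicloc} gives $M_{n,0}=\max_i[C_n]_{i,i}$ and $\Mhat_{n,0}=\max_i[\Chat_n]_{i,i}$, while Proposition \ref{prop:ivar} gives $[C_n]_{i,i}\le[\Chat_n]_{i,i}$ for every $i$; hence $M_{n,0}\le\Mhat_{n,0}$, and since $\phi(M)\ge M$ for $M\ge0$ this also yields $M_{n,0}\le\phi(\Mhat_{n,0})$.

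For the off-diagonal estimate I would write $G=\Khat_nH$, so that by the sparse-observation formula recalled above $[G]_{i,j}=g_i\unit_{j=o(i)}$ with $g_i=[\Chat_n]_{i,o(i)}/(\sigma_o^2+[\Chat_n]_{o(i),o(i)})$ (and $g_i=0$ whenever $o(i)=nil$). Using $HH^T=I_q$ to rewrite $\sigma_o^2\Khat_n\Khat_n^T=\sigma_o^2GG^T$, expanding $C_n=(I-G)\Chat_n(I-G)^T+\sigma_o^2GG^T$ gives
\[
[C_n]_{i,j}=[\Chat_n]_{i,j}-g_i[\Chat_n]_{o(i),j}-g_j[\Chat_n]_{i,o(j)}+g_ig_j[\Chat_n]_{o(i),o(j)}+\sigma_o^2g_ig_j\unit_{o(i)=o(j)}.
\]

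Then I would bound the five terms. Since $\Chat_n$ is PSD, $[\Chat_n]_{o(i),o(i)}\ge0$, so $|g_i|\le\sigma_o^{-2}|[\Chat_n]_{i,o(i)}|\le\sigma_o^{-2}\Mhat_{n,k}\lambda_A^{\dist(i,o(i))\wedge k}$, while $|[\Chat_n]_{u,v}|\le\Mhat_{n,k}\lambda_A^{\dist(u,v)\wedge k}$ by definition of $\Mhat_{n,k}$. The elementary fact $\min(a,k)+\min(b,k)\ge\min(a+b,k)$ combined with the triangle inequality (used as $\dist(i,o(j))+\dist(o(j),j)\ge\dist(i,j)$ for the second and third terms, and $\dist(i,o(i))+\dist(o(i),o(j))+\dist(o(j),j)\ge\dist(i,j)$ for the fourth, and $\dist(i,o)+\dist(o,j)\ge\dist(i,j)$ with $o=o(i)=o(j)$ for the fifth) shows that in each term the total exponent of $\lambda_A$ is at least $\dist(i,j)\wedge k$; as $\lambda_A<1$, multiplying by $\lambda_A^{-\dist(i,j)\wedge k}$ leaves the five terms bounded by $\Mhat_{n,k}$, $\sigma_o^{-2}\Mhat_{n,k}^2$, $\sigma_o^{-2}\Mhat_{n,k}^2$, $\sigma_o^{-4}\Mhat_{n,k}^3$, and $\sigma_o^{-2}\Mhat_{n,k}^2$ respectively, the last term being absent unless $o(i)=o(j)$. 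Summing and comparing with $\phi(M)=M(1+\sigma_o^{-2}M)^2+\sigma_o^{-2}M^2=M+3\sigma_o^{-2}M^2+\sigma_o^{-4}M^3$ gives $|[C_n]_{i,j}|\lambda_A^{-\dist(i,j)\wedge k}\le\phi(\Mhat_{n,k})$, and taking the maximum over $i,j$ proves $M_{n,k}\le\phi(\Mhat_{n,k})$.

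I expect the part requiring the most care to be the bookkeeping of the truncated-distance triangle inequalities across all five terms, together with checking that the degenerate cases $o(i)=nil$ or $o(j)=nil$ (where the corresponding $g$'s vanish and some terms drop out) still obey the bound $\phi(\Mhat_{n,k})$; the rest is routine algebra, and it is a useful consistency check that the five term-bounds sum exactly to the three monomials of $\phi$.
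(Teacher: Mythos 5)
Your proposal is correct and follows essentially the same route as the paper's proof: the diagonal bound via Lemma \ref{lem:basicloc} and Proposition \ref{prop:ivar}, and the off-diagonal bound via the sparse-observation Kalman-gain formula, the entry-wise expansion of $(I-\Khat_nH)\Chat_n(I-\Khat_nH)^T+\sigma_o^2\Khat_n\Khat_n^T$ (your $g_i$-notation reproduces the paper's decomposition \eqref{eqn:Cndecomp}), and term-by-term estimates using $|[\Chat_n]_{u,v}|\le\Mhat_{n,k}\lambda_A^{\dist(u,v)\wedge k}$ together with the truncated triangle inequality, summing exactly to $\phi(\Mhat_{n,k})$.
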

\begin{proof}
Based on Lemma \ref{lem:basicloc},  $M_{n,0}=\max_i |[C_n]_{i,i}|, \Mhat_{n,0}=\max_i |[\Chat_n]_{i,i}|$, so $M_{n,0}\leq \Mhat_{n,0}$ holds by Proposition \ref{prop:ivar}.
Next, we look at the off diagonal terms:
  \begin{align}
  \notag
[C_n]_{i,j}&=[\Chat_n]_{i,j}-\frac{[\Chat_n]_{i,o(i)}[\Chat_n]_{j,o(i)} }{\sigma_o^2+[\Chat_n]_{o(i),o(i)}}-\frac{[\Chat_n]_{i,o(j)}[\Chat_n]_{j,o(j)}}{\sigma_o^2+[\Chat_n]_{o(j),o(j)}}\\
\notag
&\quad +\frac{[\Chat_n]_{i,o(i)}[\Chat_n]_{j,o(j)}[\Chat_n]_{o(i),o(j)} }{(\sigma_o^2+[\Chat_n]_{o(i),o(i)})(\sigma_o^2+[\Chat_n]_{o(j),o(j)})}\\
\label{eqn:Cndecomp}
&\quad +\frac{\sigma_o^2 [\Chat_n]_{i,o(i)}[\Chat_n]_{j,o(i)} }{(\sigma_o^2+[\Chat_n]_{o(i),o(i)})^2}\unit_{o(i)=o(j)}.
\end{align}
We have the following bounds for each term in \eqref{eqn:Cndecomp}
\begin{align*}
\left|\frac{[\Chat_n]_{i,o(i)}[\Chat_n]_{j,o(i)}}{\sigma_o^2+[\Chat_n]_{o(i),o(i)}}\right|
\leq \sigma_o^{-2} \Mhat^2_{n,k} \lambda_A^{\dist(i,o(i))\wedge k+\dist(j,o(i))\wedge k}\leq \sigma_o^{-2} \Mhat^2_{n,k} \lambda_A^{\dist(j,i)\wedge k}.
\end{align*}
\begin{align*}
&\left| \frac{[\Chat_n]_{i,o(i)}[\Chat_n]_{j,o(j)}[\Chat_n]_{o(i),o(j)} }{(\sigma_o^2+[\Chat_n]_{o(i),o(i)})(\sigma_o^2+[\Chat_n]_{o(j),o(j)})}\right |\\
&\leq \sigma_o^{-4} \Mhat^3_{n,k}\lambda_A^{ \dist(i,o(i))\wedge k+ \dist(j,o(j))\wedge k +\dist(o(j),o(i))\wedge k}
\leq \sigma_o^{-4} \Mhat^3_{n,k} \lambda_A^{\dist(i,j)\wedge k}.
\end{align*}
 \begin{align*}
\left| \frac{[\Chat_n]_{i,o(i)}[\Chat_n]_{j,o(i)}}{(\sigma_o^2+[\Chat_n]_{o(i),o(i)})^2}\right|
\leq \sigma_o^{-4}   \Mhat^2_{n,k}\lambda_A^{\dist(i,o(i))\wedge k+\dist(j,o(i))\wedge k}\leq \sigma_o^{-4}  \Mhat^2_{n,k}\lambda_A^{\dist(i,j)\wedge k}.
 \end{align*}
In summary
\[
|[C_n]_{i,j}|\leq  \Mhat_{n,k} [(1+\sigma_o^{-2} \Mhat_{n,k})^2+\sigma_o^{-2}\Mhat^2_{n,k} ] \lambda_A^{\dist(i,j)\wedge k}=\phi(\Mhat_{n,k})\lambda_A^{\dist(i,j)\wedge k}.
\]
\end{proof}

\begin{prop}
\label{prop:Kalman}
Denote $\delta_{n+1}=\lambda_A^{-L} \|\Chat_{n+1}-r\Riccati_{n} (\Chat_n)\|_\infty/\|\Riccati_{n} (\Chat_n)\|_\infty,$
and
\[
\psi_{\lambda_A}(M,\delta)=(r+\delta)\max\left\{\lambda_A M\left(1+\sigma_o^{-2}M\right)^2+\lambda_A \sigma_o^{-2}M^2, \lambda^2_A M+\sigma_\xi^2\right\}.
\]
Then for $k\leq L-1$,
\[
\Mhat_{n+1,0}\leq (r+\delta_{n+1})(\lambda^2_A M_{n,0}+\sigma_\xi^2) ,\quad
\Mhat_{n+1,k+1}\leq \psi_{\lambda_A}( \Mhat_{n,k},\delta_{n+1}).
\]
\end{prop}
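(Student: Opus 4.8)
The plan is to peel $\Chat_{n+1}$ into its deterministic target $r\Riccati_n(\Chat_n)$ plus a sampling fluctuation, and then push the localization bookkeeping through the analysis and forecast maps that compose $\Riccati_n$. By the very definition of $\delta_{n+1}$, every entry of $\Chat_{n+1}-r\Riccati_n(\Chat_n)$ is bounded in absolute value by $\delta_{n+1}\lambda_A^{L}\|\Riccati_n(\Chat_n)\|_\infty$. Write $Q_n=(I-\Khat_nH)\Chat_n(I-\Khat_nH)^T+\sigma_o^2\Khat_n\Khat_n^T$ for the posterior covariance of Proposition \ref{prop:localanalysis}, so that $\Riccati_n(\Chat_n)=A_nQ_nA_n^T+\Sigma_n$. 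Since $\Chat_n$ is a sample covariance, $Q_n$ and hence $\Riccati_n(\Chat_n)$ are PSD, so $\|\Riccati_n(\Chat_n)\|_\infty=\max_i[\Riccati_n(\Chat_n)]_{i,i}$; call this number $\bar M_0$, and write $\bar M_k$ for the localization level of $\Riccati_n(\Chat_n)$ defined analogously to \eqref{eqn:loclevel}. (We are in the sparse-observation setting of this subsection, so Proposition \ref{prop:localanalysis} applies.)

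First I would chain the two deterministic propagation bounds. Proposition \ref{prop:localanalysis} gives $M_{n,0}\le\Mhat_{n,0}$ and $M_{n,k}\le\phi(\Mhat_{n,k})$ with $\phi(M)=M(1+\sigma_o^{-2}M)^2+\sigma_o^{-2}M^2$ for the posterior $Q_n$. Applying Proposition \ref{prop:localforecast} to $Q_n$ (legitimate since $Q_n$ is PSD, $\Sigma_n=\sigma_\xi^2I_d$, and $A_n$ satisfies \eqref{aspt:weakinter}) yields $\bar M_0\le\lambda_A^2M_{n,0}+\sigma_\xi^2$ and $\bar M_{k+1}\le\max\{\lambda_AM_{n,k},\bar M_0\}$. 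Now add the sampling error back. For a diagonal entry, $[\Chat_{n+1}]_{i,i}\le r\bar M_0+\delta_{n+1}\lambda_A^{L}\bar M_0\le(r+\delta_{n+1})\bar M_0$ since $\lambda_A^{L}\le1$; as $\Chat_{n+1}$ is PSD this gives $\Mhat_{n+1,0}\le(r+\delta_{n+1})\bar M_0\le(r+\delta_{n+1})(\lambda_A^2M_{n,0}+\sigma_\xi^2)$, the first claim. For an off-diagonal entry, bound $|[\Chat_{n+1}]_{i,j}|\le r\bar M_{k+1}\lambda_A^{\dist(i,j)\wedge(k+1)}+\delta_{n+1}\lambda_A^{L}\bar M_0$, divide by $\lambda_A^{\dist(i,j)\wedge(k+1)}$, and use $\dist(i,j)\wedge(k+1)\le k+1\le L$ so that $\lambda_A^{L}\lambda_A^{-\dist(i,j)\wedge(k+1)}\le1$; together with $\bar M_0\le\bar M_{k+1}$ (Lemma \ref{lem:basicloc}) this gives $|[\Chat_{n+1}]_{i,j}|\lambda_A^{-\dist(i,j)\wedge(k+1)}\le(r+\delta_{n+1})\bar M_{k+1}$. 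Finally substitute $\lambda_AM_{n,k}\le\lambda_A\phi(\Mhat_{n,k})$ and $\bar M_0\le\lambda_A^2M_{n,0}+\sigma_\xi^2\le\lambda_A^2\Mhat_{n,k}+\sigma_\xi^2$ (using $M_{n,0}\le\Mhat_{n,0}\le\Mhat_{n,k}$) into the maximum, noting that $(r+\delta_{n+1})\max\{\lambda_A\phi(\Mhat_{n,k}),\lambda_A^2\Mhat_{n,k}+\sigma_\xi^2\}$ is exactly $\psi_{\lambda_A}(\Mhat_{n,k},\delta_{n+1})$; taking the maximum over $i,j$ finishes the second claim.

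The only point that needs real care is the alignment of the exponents of $\lambda_A$: the factor $\lambda_A^{L}$ coming from the definition of $\delta_{n+1}$ must absorb the worst-case $\lambda_A^{-\dist(i,j)\wedge(k+1)}$ that appears when we divide to form the localization level, and this works precisely because $k+1\le L$ — which is why the statement is restricted to $k\le L-1$. One also has to keep straight that $\Mhat_{n,k}$ and $M_{n,k}$ denote the forecast and posterior levels respectively, so that $\Mhat_{n,k}$ is routed first through the analysis step (Proposition \ref{prop:localanalysis}) and only then through the forecast step (Proposition \ref{prop:localforecast}); everything else is routine chaining of those two propositions with the entrywise sampling bound.
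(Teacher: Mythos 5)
Your proposal is correct and follows essentially the same route as the paper: decompose $\Chat_{n+1}$ into $r\Riccati_n(\Chat_n)$ plus the sampling fluctuation controlled entrywise by $\delta_{n+1}\lambda_A^L\|\Riccati_n(\Chat_n)\|_\infty$, chain Proposition \ref{prop:localanalysis} and Proposition \ref{prop:localforecast} to track the localization levels of the posterior and of $\Riccati_n(\Chat_n)$, and use $k+1\le L$ so that the $\lambda_A^L$ factor in the definition of $\delta_{n+1}$ absorbs the worst-case weight $\lambda_A^{-\dist(i,j)\wedge(k+1)}$. The only cosmetic difference is that you argue entry by entry before maximizing, whereas the paper manipulates the maxima $\Rhat_{n+1,k}$ directly; the substance is identical.
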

\begin{proof}
Recall that 
\[
\Riccati_n(\Chat_n)= A_n [(I-\Khat_nH )\Chat_n(I-\Khat_n H)^T+\sigma_o^2 \Khat_n \Khat_n^T]A_n^T+\Sigma_n.
\] 
Following \eqref{eqn:loclevel}, we define its localized status:
\[
R_{n,l}=\max_{i,j}\left\{ | [(I-\Khat_nH )\Chat_n(I-\Khat_n H)^T+\sigma_o^2 \Khat_n \Khat_n^T]_{i,j}| \lambda_A^{-\dist(i,j)\wedge l}\right\},
\]
\[
\Rhat_{n+1,l}=\max_{i,j}\left\{ |[\Riccati_n(\Chat_n)]_{i,j}| \lambda_A^{-\dist(i,j)\wedge l}\right\}.
\]
Apply Proposition  \ref{prop:localanalysis},
\[
R_{n,0}\leq \Mhat_{n,0},\quad R_{n,k}\leq \phi(\Mhat_{n,k}). 
\]
Then apply Proposition \ref{prop:localforecast}, we find that 
\[
\Rhat_{n+1,0}=\|\Riccati_n(\Chat_n)\|_\infty\leq \lambda_A^2\Mhat_{n,0}+\sigma_\xi^2,\quad \Rhat_{n+1,k+1}\leq \max\{\lambda_A\phi(\Mhat_{n,k}), \Rhat_{n+1,0}\}.
\]
Finally by Lemma \ref{lem:basicloc},
\begin{align*}
\Mhat_{n+1,0}=\|\Chat_{n+1}\|_\infty&\leq r \|\Riccati_n(\Chat_n)\|_\infty+ \|\Chat_{n+1}-r\Riccati_n(\Chat_n)\|_\infty\leq (r+\lambda_A^L\delta_{n+1})\|\Riccati_n(\Chat_n)\|_\infty.
\end{align*}
Since $\|\Riccati_n(\Chat_n)\|_\infty\leq \lambda_A^2\Mhat_{n,0}+\sigma_\xi^2$, we have our bound for $\Mhat_{n+1,0}$. Likewise,
\begin{align*}
\Mhat_{n+1,k+1}&=\max_{i,j}|[\Chat_{n+1}]_{i,j}| \lambda_A^{-\dist(i,j)\wedge (k+1)}\\
&\leq r\max_{i,j}|[\Riccati_n(\Chat_n)]_{i,j}|\lambda_A^{-\dist(i,j)\wedge (k+1)}+ \max_{i,j}|[\Chat_{n+1}]_{i,j}-r[\Riccati_n(\Chat_n)]_{i,j}| \lambda_A^{-L}\\
 &= r \Rhat_{n+1, k+1}+\delta_{n+1}\|\Riccati_n(\Chat_n)\|_\infty\\
 &\leq r\max\{\lambda_A\phi(\Mhat_{n,k}), \Rhat_{n+1,0}\}+\delta_{n+1} \|\Riccati_n(\Chat_n)\|_\infty\leq \psi_{\lambda_A}(\Mhat_{n,k},\delta_{n+1}).
\end{align*}

%
%
%
\end{proof}

\subsection{Stability of localized structures}

\begin{lem}
\label{lem:diagonal}
Under the conditions of Theorem \ref{thm:formloc}, when $K>\Gamma(r\epsilon^{-1},d)$ with $\epsilon=\min\{ \tfrac{1}{2\lambda_A}-\tfrac{r}{2},\tfrac{\delta}{2}\}$, the diagonal status defined by \eqref{eqn:loclevel} satisfies:
\[
\E_n \Mhat_{n+1,0}\leq  \lambda_A \Mhat_{n,0}+(r+\delta)\sigma_\xi^{2}\quad a.s..
\]
\[
\E_n \Mhat_{n+1,0}^2\leq  \lambda_A \Mhat^2_{n,0}+\frac{(r+\delta)^2\sigma_\xi^4}{1-\lambda_A}\quad a.s..
\]
Therefore, by Gronwall's inequality,
\[
\E_0 \Mhat_{n,0}\leq \lambda^n_A\Mhat_{0,0}+(r+\delta)\sigma_\xi^2\sum_{k=0}^n \lambda_A^k
\leq \lambda^n_A\Mhat_{0,0}+\frac{(r+\delta)\sigma_\xi^2}{1-\lambda_A}\quad a.s..
\]
\[
\E_0 \Mhat^2_{n,0}\leq \lambda^{n}_A \Mhat^2_{0,0}+\frac{(r+\delta)^2\sigma_\xi^4}{(1-\lambda_A)^2}\quad a.s..
\]
\end{lem}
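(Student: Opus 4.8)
The plan is to turn the one-step localization estimates of Section \ref{sec:closeneighbor} into a scalar stochastic recursion for $\Mhat_{n+1,0}$ given $\mathcal{F}_n$, and then iterate. The key observation is that for $k=0$ the weight $\lambda_A^{-\dist(i,j)\wedge 0}$ is trivial, so $\Mhat_{n,0}=\|\Chat_n\|_\infty$ and, by the triangle inequality,
\[
\Mhat_{n+1,0}=\|\Chat_{n+1}\|_\infty \le r\|\Riccati_n(\Chat_n)\|_\infty + \|\Chat_{n+1}-r\Riccati_n(\Chat_n)\|_\infty .
\]
I would bound the first term structurally by combining Propositions \ref{prop:localanalysis} and \ref{prop:localforecast} exactly as in the proof of Proposition \ref{prop:Kalman}, namely $\|\Riccati_n(\Chat_n)\|_\infty=\Rhat_{n+1,0}\le \lambda_A^2\Mhat_{n,0}+\sigma_\xi^2$, which is $\mathcal{F}_n$-measurable; the second (sampling) term is handled by Corollary \ref{cor:concentration}(b) with $C=1$, which under $K>\Gamma(r\epsilon^{-1},d)$ gives $\E_n\|\Chat_{n+1}-r\Riccati_n(\Chat_n)\|_\infty\le \epsilon\|\Riccati_n(\Chat_n)\|_\infty$ and $\E_n\|\Chat_{n+1}-r\Riccati_n(\Chat_n)\|_\infty^2\le 2\epsilon\|\Riccati_n(\Chat_n)\|_\infty^2$. (I would avoid invoking the general-$k$ bound of Proposition \ref{prop:Kalman} directly here, since its $\delta_{n+1}$ carries an unhelpful $\lambda_A^{-L}$ factor that is irrelevant for the diagonal.)

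For the first moment, pulling the $\mathcal{F}_n$-measurable factor out of $\E_n$ yields $\E_n\Mhat_{n+1,0}\le (r+\epsilon)(\lambda_A^2\Mhat_{n,0}+\sigma_\xi^2)$. Then I would use the choice $\epsilon=\min\{\tfrac{1}{2\lambda_A}-\tfrac r2,\tfrac\delta2\}$ together with $r\lambda_A<1$: the first half of the minimum gives $(r+\epsilon)\lambda_A\le \tfrac12(r\lambda_A+1)<1$, hence $(r+\epsilon)\lambda_A^2\le\lambda_A$, while $r+\epsilon\le r+\delta$ from the second half; substituting produces the claimed $\E_n\Mhat_{n+1,0}\le\lambda_A\Mhat_{n,0}+(r+\delta)\sigma_\xi^2$.

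For the second moment, I would square the one-step bound and take $\E_n$; since $\|\Riccati_n(\Chat_n)\|_\infty\in\mathcal{F}_n$ the cross term contributes $2r\epsilon$ and the square term $2\epsilon$, giving
\[
\E_n\Mhat_{n+1,0}^2\le \beta(\lambda_A^2\Mhat_{n,0}+\sigma_\xi^2)^2,\qquad \beta:=r^2+2r\epsilon+2\epsilon .
\]
Then apply the weighted Young inequality $(u+v)^2\le \tfrac{1}{\lambda_A}u^2+\tfrac{1}{1-\lambda_A}v^2$ with $u=\lambda_A^2\Mhat_{n,0}$, $v=\sigma_\xi^2$ to get $\beta(\lambda_A^2\Mhat_{n,0}+\sigma_\xi^2)^2\le \beta\lambda_A^3\Mhat_{n,0}^2+\tfrac{\beta}{1-\lambda_A}\sigma_\xi^4$, and it remains to check the two scalar facts $\beta\lambda_A^2\le1$ and $\beta\le(r+\delta)^2$. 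The first follows from $2\epsilon(r+1)\le(\tfrac1{\lambda_A}-r)(r+1)$, whence $\beta\le\tfrac{r+1}{\lambda_A}-r$ and $\beta\lambda_A^2\le\lambda_A+r\lambda_A(1-\lambda_A)<1$ (using $r\lambda_A<1$); the second follows from $2\epsilon(r+1)\le\delta(r+1)$, so $\beta\le r^2+r\delta+\delta\le(r+\delta)^2$ because $r>1$. This gives $\E_n\Mhat_{n+1,0}^2\le\lambda_A\Mhat_{n,0}^2+\tfrac{(r+\delta)^2\sigma_\xi^4}{1-\lambda_A}$. Finally, taking $\E_0$ of both recursions via the tower property and unrolling, the geometric series $\sum_{k\ge0}\lambda_A^k=(1-\lambda_A)^{-1}$ produces the two stated closed forms for $\E_0\Mhat_{n,0}$ and $\E_0\Mhat_{n,0}^2$.

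The main obstacle is not conceptual but arithmetic bookkeeping: the concentration bound only yields the crude second-moment constant $\beta=r^2+2r\epsilon+2\epsilon$, and one must verify it still satisfies $\beta\lambda_A^2\le1$ and $\beta\le(r+\delta)^2$ so that the contraction factor stays $\lambda_A$ and the forcing constant stays $(r+\delta)^2\sigma_\xi^4/(1-\lambda_A)$. This is exactly what forces $\epsilon$ to be the specific minimum in the statement and uses both hypotheses $r\lambda_A<1$ and $r>1$ from Theorem \ref{thm:formloc}. A secondary point to be careful about is measurability: since Corollary \ref{cor:concentration} is stated conditionally on $\mathcal{F}_n$, the quantities $\Mhat_{n,0}$ and $\|\Riccati_n(\Chat_n)\|_\infty$ are constants under $\E_n$ and may legitimately be factored out before applying the sampling estimates.
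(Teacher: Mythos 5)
Your proposal is correct and follows essentially the same route as the paper: the same decomposition $\Mhat_{n+1,0}\le \|\Chat_{n+1}-r\Riccati_n(\Chat_n)\|_\infty+r\|\Riccati_n(\Chat_n)\|_\infty$, the same structural bound $\|\Riccati_n(\Chat_n)\|_\infty\le\lambda_A^2\Mhat_{n,0}+\sigma_\xi^2$ via Propositions \ref{prop:localanalysis} and \ref{prop:localforecast}, the same use of Corollary \ref{cor:concentration} b), and the same role for the choice of $\epsilon$ (the paper just packages the second-moment constant as $(r+2\epsilon)^2$ instead of your $\beta$, and both satisfy the needed inequalities under $r\lambda_A<1$ and $2\epsilon\le\delta$). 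No gaps.
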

\begin{proof}
We apply Lemma \ref{lem:basicloc}, Proposition \ref{prop:localanalysis} to find that 
\[
\|(I-\Khat_n H)\Chat_n(I-\Khat_n H)^T+\sigma_o^2 \Khat_n \Khat_n^T\|_\infty=M_{n,0}
\leq \Mhat_{n,0}=\|\Chat_n\|_\infty,
\]
and by the first claim of  Proposition \ref{prop:localforecast},
\[
\|\Riccati_n(\Chat_n)\|_\infty \leq \lambda_A^2 \|(I-\Khat_n H)\Chat_n(I-\Khat_n H)^T+\sigma_o^2 \Khat_n \Khat_n^T\|_\infty+\sigma_\xi^2 \leq \lambda_A^2 \|\Chat_n\|_\infty+\sigma_\xi^2.
\]
Also by Young's inequality, one can show that 
\[
\|\Riccati_n(\Chat_n)\|^2_\infty \leq (\lambda_A^2 \|\Chat_n\|_\infty+\sigma_\xi^2)^2
\leq \lambda_A^3\|\Chat_n\|_\infty^2+\frac{\sigma_\xi^4}{1-\lambda_A}.
\]
With $\epsilon=\min\{ \tfrac{1}{2\lambda_A}-\tfrac{r}{2},\tfrac{\delta}{2}\}$, when $K>\Gamma(r\epsilon^{-1},d)$, by Corollary \ref{cor:concentration} b), 
\[
\E_n \|\Chat_{n+1}-r\Riccati_n(\Chat_n)\|_\infty\leq \epsilon \|\Riccati_n(\Chat_n)\|_\infty\quad a.s.,
\]
\[
\E_n \|\Chat_{n+1}-r\Riccati_n(\Chat_n)\|^2_\infty\leq 2\epsilon r\|\Riccati_n(\Chat_n)\|^2_\infty\quad a.s..
\]
By $\epsilon+r\leq \lambda_A^{-1}$ and $\|\Riccati_n(\Chat_n)\|_\infty\leq \lambda_A^2 \|\Chat_n\|_\infty+\sigma_\xi^2$,
\begin{align*}
\E_n \|\Chat_{n+1}\|_\infty &\leq \E_n \|\Chat_{n+1}-r\Riccati_n(\Chat_n)\|_\infty+r\|\Riccati_n(\Chat_n)\|_\infty\\ 
&\leq (r+\epsilon) \|\Riccati_n(\Chat_n)\|_\infty\leq\lambda_A \|\Chat_n\|_\infty+(r+\delta)\sigma_\xi^2.
\end{align*}
Likewise, because $ (r+2\epsilon)\leq \lambda^{-1}_A$, 
\begin{align*}
\E_n \|\Chat_{n+1}\|^2_\infty &\leq \E_n \|\Chat_{n+1}-r\Riccati_n(\Chat_n)\|^2_\infty+
r^2\|\Riccati_n(\Chat_n)\|_\infty^2\\ 
&\quad+2r\|\Riccati_n(\Chat_n)\|_\infty\E_n \|\Chat_{n+1}-r\Riccati_n(\Chat_n)\|_\infty\\
&\leq (2\epsilon r+r^2+2\epsilon r)\|\Riccati_n(\Chat_n)\|^2_\infty \\
&\leq (r+2\epsilon)^2\|\Riccati_n(\Chat_n)\|^2_\infty\\
&\leq \lambda_A \|\Chat_n\|^2_\infty+\frac{(r+\delta)^2\sigma_\xi^4}{1-\lambda_A}.
\end{align*}
\end{proof}

\begin{lem}
\label{lem:multistep}
Suppose the  following holds 
\[
n_*\geq 2L+\frac{\log 4\delta_*^{-1}}{\log \lambda^{-1}_A},\quad \delta_*\leq \frac14,\,\,\delta_*\leq \frac12(\lambda_A^{-1}-r),
\]
and the sample size satisfies \eqref{tmp:sampleK}. Then
\[
\E_0 \Mhat_{n_*,L}\leq \frac{1}{2} \Mhat_{0,L}+ (1+2\delta_*)M_*\quad a.s..
\]
\end{lem}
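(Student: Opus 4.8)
The plan is to iterate the one-step bounds of Proposition \ref{prop:Kalman} over the $n_*$ steps, keeping track of two effects separately: the diagonal status $\Mhat_{n,0}$ contracts geometrically to an equilibrium of size $M_*$, while the radius-$L$ status $\Mhat_{n,L}$ is built up out of the diagonal status $L$ steps earlier by iterating the map $M\mapsto\psi_{\lambda_A}(M,\delta_*)$.

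First I would control the sampling error. With $\delta_{n+1}$ as in Proposition \ref{prop:Kalman}, set $\mathcal G=\{\delta_n\le\delta_*\ \text{for all}\ 1\le n\le n_*\}$. The tail bound obtained in the proof of Corollary \ref{cor:concentration}, $\Prob_n(\|\Chat_{n+1}-r\Riccati_n(\Chat_n)\|_\infty>r t\,\|\Riccati_n(\Chat_n)\|_\infty)\le 8d^2\exp(-cK\min\{t,t^2\})$ with $t=\lambda_A^L\delta_*/r$, together with a union bound over the $n_*$ steps, turns the first requirement of \eqref{tmp:sampleK} into $\Prob_0(\mathcal G^c)\le\delta_*^2$ up to an absolute constant. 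On $\mathcal G$, monotonicity of $\psi_{\lambda_A}(M,\cdot)$ allows replacing every $\delta_n$ in Proposition \ref{prop:Kalman} by $\delta_*$.

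On $\mathcal G$ I would then iterate. For the diagonal, $\delta_*\le\tfrac12(\lambda_A^{-1}-r)$ gives $(r+\delta_*)\lambda_A^2\le\lambda_A$, so $\Mhat_{n,0}\le\lambda_A^n\Mhat_{0,0}+\tfrac{(r+2\delta_*)\sigma_\xi^2}{1-\lambda_A}\le\lambda_A^n\Mhat_{0,0}+M_*$ by the lower bound on $M_*$ in condition 3 of Theorem \ref{thm:formloc} (the moment version being exactly Lemma \ref{lem:diagonal}). Tracing $\Mhat_{n_*,L}$ back through Proposition \ref{prop:Kalman} for $L$ steps bounds it by the $L$-fold iterate of $M\mapsto\psi_{\lambda_A}(M,\delta_*)$ applied to $\Mhat_{n_*-L,0}$; and since $n_*=2L+\lceil\log 4\delta_*^{-1}/\log\lambda_A^{-1}\rceil$, one has $\lambda_A^{n_*-L}\le\lambda_A^L\tfrac{\delta_*}{4}$, hence $\Mhat_{n_*-L,0}\le\lambda_A^L\tfrac{\delta_*}{4}\Mhat_{0,0}+M_*$. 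I would now split on the size of $\Mhat_{0,0}$. If $\Mhat_{0,0}\ge 4\lambda_A^{-L}M_*$, then even without $\mathcal G$ the trivial bound $\Mhat_{n_*,L}\le\lambda_A^{-L}\Mhat_{n_*,0}$ of Lemma \ref{lem:basicloc} and the first-moment bound of Lemma \ref{lem:diagonal} (applicable since $K>\Gamma(r\delta_*^{-1},d)$ by \eqref{tmp:sampleK}) give $\E_0\Mhat_{n_*,L}\le\lambda_A^{n_*-L}\Mhat_{0,0}+\lambda_A^{-L}M_*\le\tfrac{1}{16}\Mhat_{0,0}+\tfrac14\Mhat_{0,0}\le\tfrac12\Mhat_{0,L}$. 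In the complementary regime $\Mhat_{0,0}<4\lambda_A^{-L}M_*$, on $\mathcal G$ the previous display reads $\Mhat_{n_*-L,0}\le(1+\delta_*)M_*$, and since $M\mapsto\psi_{\lambda_A}(M,\delta_*)$ is increasing with $\psi_{\lambda_A}(M_*,\delta_*)\le M_*$ (condition 3), its $L$-fold iterate keeps $\Mhat_{n_*,L}\le(1+2\delta_*)M_*$; on $\mathcal G^c$ I would use $\Prob_0(\mathcal G^c)\le\delta_*^2$, the trivial bound, and the second-moment bound of Lemma \ref{lem:diagonal} via Cauchy--Schwarz to absorb the remaining contribution. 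Adding the two pieces gives $\E_0\Mhat_{n_*,L}\le\tfrac12\Mhat_{0,L}+(1+2\delta_*)M_*$.

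The main obstacle is the estimate for the $L$-fold composition of $M\mapsto\psi_{\lambda_A}(M,\delta_*)$ near the fixed point $M_*$: because $\psi_{\lambda_A}(\cdot,\delta_*)$ is cubic in $M$, composing it $L$ times from a point slightly above $M_*$ can amplify the residual initial-condition term $\lambda_A^{n_*-L}\Mhat_{0,0}$, and one must use the local geometry at $M_*$ carefully — this is precisely where the choice $n_*=2L+\lceil\log4\delta_*^{-1}/\log\lambda_A^{-1}\rceil$, the $\lambda_A^{-2L}$-scaling of the sample size in \eqref{tmp:sampleK}, and the fixed-point inequality of condition 3 of Theorem \ref{thm:formloc} have to be combined; the same considerations account for the extra power of $\lambda_A^{-L}$ multiplying the transient term in the conclusion of Theorem \ref{thm:formloc}.
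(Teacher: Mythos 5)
Your overall architecture matches the paper's: split into a ``large initial condition'' case handled by Lemma \ref{lem:diagonal} alone, and a ``small'' case handled on the event $\{\delta_k\le\delta_*,\ k\le n_*\}$ (with its complement absorbed by Cauchy--Schwarz and the second-moment bound), where the diagonal status contracts for $n_*-L$ steps and the last $L$ steps are run through the increasing map $\psi_{\lambda_A}(\cdot,\delta_*)$. However, there is a genuine gap at exactly the point you flag as the ``main obstacle,'' and it is created by your choice of case-splitting threshold. Because you split on $\Mhat_{0,0}\ge 4\lambda_A^{-L}M_*$, in the small case you can only conclude $\Mhat_{n_*-L,0}\le(1+\delta_*)M_*$, i.e.\ a starting point strictly \emph{above} the fixed point. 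Condition 3 of Theorem \ref{thm:formloc} gives information about $\psi_{\lambda_A}(\cdot,\delta_*)$ only at and below $M_*$ (it is increasing and $\psi_{\lambda_A}(M_*,\delta_*)\le M_*$); it says nothing about points above $M_*$. Since $\psi_{\lambda_A}$ is cubic, the only generic scaling is $\psi_{\lambda_A}(cM,\delta)\le c^3\psi_{\lambda_A}(M,\delta)$ for $c\ge1$, so an $L$-fold iteration started at $(1+\delta_*)M_*$ is only controlled by $(1+\delta_*)^{3^L}M_*$, not by $(1+2\delta_*)M_*$; the assertion ``its $L$-fold iterate keeps $\Mhat_{n_*,L}\le(1+2\delta_*)M_*$'' is therefore unsupported, and no amount of ``local geometry at $M_*$'' is available from the hypotheses (the slope of $\psi_{\lambda_A}$ at $M_*$ can exceed one when $\sigma_o^{-2}M_*$ is large).

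The paper avoids this issue structurally rather than by a local analysis: it splits the cases on whether $\Mhat_{0,L}$ exceeds the \emph{noise-level} threshold $\tfrac{4(r+\delta_*)\sigma_\xi^2}{\lambda_A^{L}(1-\lambda_A)}$, not a threshold expressed through $M_*$. In the small case this makes the transient after $n_*-L$ steps of order $\tfrac{\delta_*\sigma_\xi^2}{1-\lambda_A}$, so that $\Mhat_{n_*-L,0}\le\tfrac{(r+2\delta_*)\sigma_\xi^2}{1-\lambda_A}\le M_*$, i.e.\ the last $L$ steps are started \emph{at or below} the subfixed point; monotonicity of $\psi_{\lambda_A}$ together with $\psi_{\lambda_A}(M_*,\delta_*)\le M_*$ then keeps every $\Mhat_{n_*-L+k,k}\le M_*$, and the $(1+2\delta_*)$ slack in the conclusion is spent only on the $\mathcal{U}^c$ correction, not on the iteration. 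To repair your argument you would need to redo the case split along these lines (note that $M_*$ may be much larger than $\tfrac{(r+2\delta_*)\sigma_\xi^2}{1-\lambda_A}$, so your threshold cannot be salvaged by tightening constants). A secondary point: in your small case the Cauchy--Schwarz term is of size $\delta_*\lambda_A^{-L}M_*$ rather than $\delta_*M_*$ with your threshold and the stated probability bound $\Prob_0(\mathcal{G}^c)\lesssim\delta_*^2$, so the claim that it is ``absorbed'' also needs the bookkeeping done with the paper's threshold and second-moment estimate.
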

\begin{proof}
\textbf{Case 1}: if $\Mhat_{0,L}>\frac{4(r+\delta_*)\sigma_\xi^2}{\lambda_A^{L}(1-\lambda_A)} $. By Lemma \ref{lem:basicloc}
\[
\Mhat_{k,0}\leq \Mhat_{k,L}\leq \lambda_A^{-L} \Mhat_{k,0}.
\]
Then by Lemma \ref{lem:diagonal}
\[
\E_0 \Mhat_{n,L}\leq \E_0 \lambda_A^{-L}\Mhat_{n,0}\leq  \lambda_A^{n-L}  \Mhat_{0,0} +\frac{(r+\delta_*)\sigma_\xi^2}{\lambda_A^{L}(1-\lambda_A)}\leq \lambda_A^{n-L}  \Mhat_{0,L} +\frac{(r+\delta_*)\sigma_\xi^2}{\lambda_A^{L}(1-\lambda_A)}\quad a.s..
\]
By our choice of  $n_*$ , $\lambda_A^{n_*-L}\leq \frac14$, so we have our  claim, since
\[
\E_0 \Mhat_{n_*,L}\leq \frac{1}{4} \Mhat_{0,L} +\frac{(r+\delta_*)\sigma_\xi^2}{\lambda_A^{L}(1-\lambda_A)}\leq \frac{1}{2}\Mhat_{0,L}\quad a.s..
\]
\textbf{Case 2}: if $\Mhat_{0,L}\leq \frac{4(r+\delta_*)\sigma_\xi^2}{\lambda_A^{L}(1-\lambda_A)}$. Consider the event 
\[
\mathcal{U}=\{\delta_k\leq \delta_*,\forall k\leq n_*\}.
\]
Denote its complementary set as $\mathcal{U}^c$. Then the expectation can be decomposed as
\[
\E_0 \Mhat_{n_*,L}\leq \E_0 \Mhat_{n_*,L} \unit_{\mathcal{U}} +\E_0 \Mhat_{n_*,L} \unit_{\mathcal{U}^c}\leq  \E_0 \Mhat_{n_*,L} \unit_{\mathcal{U}}+\sqrt{\Prob_0(\mathcal{U}^c)} \sqrt{\E_0 \Mhat^2_{n_*,L} },\] 
where we applied the Cauchy inequality for the $\mathcal{U}^c$ part, and $\Prob_0$ is the probability conditioned on $\mathcal{F}_0$.  We will find a bound for each of the two parts. 

If $\mathcal{U}$ holds, then $\delta_{n+1}\leq \delta_*$ for $n\leq n_*-1$. By Proposition \ref{prop:Kalman}, 
\[
\Mhat_{n+1,0}\leq (r+\delta_*) (\lambda_A^2 \Mhat_{n,0}+\sigma_{\xi}^2)\leq  \lambda_A \Mhat_{n,0}+(r+\delta_*)\sigma_\xi^2. 
\]
Then by the Gronwall's inequality, under $\mathcal{U}$, 
\[
\Mhat_{n, 0}\leq \lambda^{n}_A\Mhat_{0,0}+\frac{(r+\delta_*)\sigma_\xi^2}{1-\lambda_A}. 
\]
Because $\Mhat_{0,0}\leq \Mhat_{0,L}\leq \frac{4(r+\delta_*)\sigma_\xi^2}{\lambda_A^{L}(1-\lambda_A)}$, so after $n_0=n_*-L\geq L+\lceil-\log (4r\delta_*^{-1}+4)/\log \lambda_A\rceil$
\[
\lambda^{n_0}_A\Mhat_{0,0}\leq \frac{\delta_*\sigma_\xi^2}{1-\lambda_A},\quad\text{so}\quad\Mhat_{n_0, 0}\leq \frac{(r+2\delta_*)\sigma_\xi^2}{1-\lambda_A}\leq M_*.
\]
In the next $1\leq k\leq L$ steps, since $\delta_n\leq \delta_*$ when $\mathcal{U}$ holds,  because $\psi_{\lambda_A}$ is increasing,  by Proposition \ref{prop:Kalman} 
\[
\Mhat_{n_0+k, k}\leq \psi_{\lambda_A}(\Mhat_{n_0+k-1,k-1},\delta_* ),
\]
we can derive that  $\Mhat_{n_0+L, L}\leq M_*$. Therefore  by $n_*=n_0+L$, 
\[
\E_0 \Mhat_{n_*,L}  \unit_{\mathcal{U}}\leq  M_*,\quad a.s..
\]
In order to conclude our claim, it suffices to show that 
\begin{equation}
\label{tmp:outlier}
\Prob_0(\mathcal{U}^c) \E_0 \Mhat^2_{n,L} \leq \delta_*^2 M_*^2,\quad a.s.. 
\end{equation}
Apply Lemma \ref{lem:diagonal} with $\delta=\delta_*$, recall that $\Mhat_{0,L}\leq \frac{4(r+\delta_*)\sigma_\xi^2}{\lambda_A^{L}(1-\lambda_A)}$ and $16 \lambda_A^{n_*-2L}\leq 1$,
\begin{align*}
\E_0 \Mhat^2_{n_*,L}\leq &\lambda_A^{-2L}\E_0 \Mhat^2_{n_*,0}\leq  \lambda^{n_*-2L}_A \Mhat^2_{0,L}+\frac{(r+\delta_*)^2\sigma_\xi^4}{(1-\lambda_A)^2}\\
&\leq \lambda^{n_*}_A\frac{16(r+\delta_*)^2\sigma_\xi^4}{\lambda_A^{2L}(1-\lambda_A)^2}+\frac{(r+\delta_*)^2\sigma_\xi^4}{(1-\lambda_A)^2}\leq 2M_*^2.
\end{align*}
Moreover,  by Theorem \ref{thm:localconcen}  b)
\[
\Prob(\delta_{n+1}>\delta_*|\mathcal{F}_n)\leq 8d^2\exp(-cK\lambda_A^{2L}\delta_*^2)\leq \frac{\delta^2_*}{2n_*}.
\]
where the final bound comes with the sample $K$ satisfying \eqref{tmp:sampleK}.
Therefore, by the law of iterated expectation, 
\[
\Prob_0(\mathcal{U}^c)\leq  \sum_{k=1}^{n_*} \Prob_0(\delta_k>\delta_*)=\sum_{k=0}^{n_*-1}\E_0 \Prob(\delta_{k+1}>\delta_*|\mathcal{F}_k)\leq \frac12\delta_*^2,
\]
and \eqref{tmp:outlier} comes as a result. 
\end{proof}

\begin{proof}[Proof of Theorem \ref{thm:formloc}]
Recall that $M_n=\Mhat_{n,L}$. So 
\[
\E_0 M_{n_*}\leq \frac{1}{2} M_{0}+ (1+\delta_*)M_*
\]
has been proved by Lemma \ref{lem:multistep}. This leads to the following using Gronwall's inequality,
\[
\E_0 M_{jn_*}\leq \frac{1}{2^j} M_{0}+2(1+\delta_*)M_*.
\]
Next,  for $k=1,\cdots, n_*-1$,  apply Lemma \ref{lem:diagonal} with $\delta=\delta_*$
\[
\E M_{k}=\E \Mhat_{k,L}\leq \lambda_A^{-L} \E \Mhat_{k,0}\leq \lambda_A^{-L} \E\Mhat_{0,0}+\frac{(r+\delta_*)\sigma_\xi^2}{\lambda_A^{L}(1-\lambda_A)}\leq \lambda_A^{-L}(\E \|\Chat_0\|+M_*), 
\]
because $\Mhat_{0,0}=\|\Chat_0\|_\infty\leq \|\Chat_0\|$ by Lemma \ref{lem:basicloc}. Then if $k+mn_*\leq T$, 
\begin{align*}
\sum_{j=0}^m \E M_{k+j n_*}=\sum_{j=0}^m \E\E_{k} M_{k+j n_*}
&\leq \sum_{j=0}^m  \frac{1}{2^j} \E M_{k}+2(1+\delta_*)M_*\\
&\leq  2\E\Mhat_{k,L}+2(m+1)(1+\delta_*)M_*\\
&\leq 2\lambda_A^{-L}(\E \|\Chat_0\|+M_*)+2(m+1)(1+\delta_*)M_*. 
\end{align*}
Summation of the inequality above with $k=0,\cdots,n_*-1$, we obtain our final claim. 
\end{proof}

\subsection{Small noise scaling}
\begin{proof}[Proof of Theorem \ref{thm:accuracy}]
It suffices to verify the conditions of Theorems \ref{thm:main} and \ref{thm:formloc} under the small noise scaling. 

First, we check Theorem \ref{thm:formloc}.  Condition 1)  is invariant except that $\Sigma_n=\epsilon \sigma_\xi^2 I_d$. Condition 2) concerns only of $A_n$, so it and $\lambda_A$ are also invariant under small noise scaling. For condition 3), if it holds without small noise scaling, that is
\[
(r+\delta_*)\max\left\{\lambda_A M_*\left(1+\sigma_o^{-2}M_*\right)^2+\lambda_A \sigma_o^{-2} M_*^2, \lambda^2_A M_*+\sigma_\xi^2\right\}\leq M_*.
\]
This leads to
\[
(r+\delta_*)\max\left\{\lambda_A (\epsilon M_*)\left(1+(\epsilon\sigma^2_o)^{-1}(\epsilon M_*)\right)^2+\lambda_A (\epsilon \sigma_o^2)^{-1} (\epsilon M_*)^2, \lambda^2_A \epsilon M_*+\epsilon\sigma_\xi^2\right\}\leq \epsilon M_*.
\]
Moreover, condition 3) requires that 
\[
M_*\geq \frac{(r+\delta_*) \sigma_\xi^2}{1-\lambda_A}\quad\Rightarrow\quad \epsilon M_*\geq \frac{(r+\delta_*) \epsilon \sigma_\xi^2}{1-\lambda_A}. 
\]
Therefore, with small scaling, condition 3) holds with the same $\delta_*$, while $M_*$ is replaced by $\epsilon M_*$. Condition 4) is invariant under the small noise scaling, since $\delta_*$ and $\lambda_A$ are invariant. 

As a consequence, Theorem \ref{thm:formloc} implies the following:
\begin{equation}
\label{tmp:smallscale}
\frac{1}{T}\sum_{k=1}^T\E  M_{k}\leq \frac{2n_* }{T\lambda_A^{L}}(\E \|\Chat_0\|+\epsilon M_*)+2(1+\delta_*)\epsilon M_*.
\end{equation}
This yields the first claimed result, since $M_k=\Mhat_{k,L}\geq \|\Chat_k\|_\infty$ by Lemma \ref{lem:basicloc}. 

Next we check the conditions of Theorem \ref{thm:main}. For condition 1), $m_\Sigma$ and $M_\Sigma$ need to be replaced by $\epsilon \sigma_\xi^2$ since we assume $\Sigma_n=\epsilon \sigma_{\xi}^2 I_d$. Condition 2) still holds with $(r_0,\rho)\to (\epsilon^{-1}r_0, \epsilon \rho)$ since
\[
\E\ehat_0 \otimes \ehat_0\preceq r_0(\Chat_0+\rho I_d)\quad \Rightarrow\quad \E\ehat_0 \otimes \ehat_0\preceq (\epsilon^{-1} r_0)(\Chat_0+\epsilon\rho I_d). 
\] 
Condition 3) is guaranteed by \eqref{tmp:smallscale} above, with $M_0=2(1+\delta_*)\epsilon M_*$. Condition 4) and condition 5) are both invariant, as it concerns only geometry quantities. Finally it suffices to plug in all the estimates for the result, and find
\begin{align*}
1-\frac{1}{T}\sum_{n=0}^{T-1} &\Prob(\E_{S} \ehat_n \otimes \ehat_n\preceq r_*( \Chat_n\circ \Dcut^L+\epsilon \rho I_d))\\
&\leq \frac{r_0}{T \epsilon \log r_*}+\frac{2\delta n_* (\E \|\Chat_0\|+\epsilon M_*)}{T \epsilon \lambda_A^{L} \log r_*}(\rho^{-1} \calB_l^2 M_A^2+\tfrac{2r^{1/3}}{(\rho\sigma_o)^{1/3}})\\
&\quad +\frac{\delta}{\log r_*}\left(2(\rho^{-1} \calB_l^2 M_A^2+\tfrac{2r^{1/3}}{(\rho\sigma_o)^{1/3}})(1+\delta_*) M_*+\rho^{-1} \sigma_\xi^2+2\tfrac{r^{1/3}}{\rho^{1/3}}\sigma^{2/3}_o  \right).
\end{align*}
Note that in above some $\epsilon$ terms are upper-bounded by $1$, so the inequality has a simpler form. 
\end{proof}

\section{Conclusion and discussion }
\label{sec:conclude}
Ensemble Kalman filter (EnKF) is a popular tool for high dimensional data assimilation problems. Domain localization is an important EnKF technique that exploits the natural localized covariance structure, and simplifies the associated sampling task. We rigorously investigate the performance of localized EnKF (LEnKF) for linear systems.  We show in Theorem \ref{thm:main} that in order for the filter error covariance to be dominated by the ensemble covariance, 1) the sample size $K$ needs to exceed a constant that depends on the localization radius and the logarithmic of the state dimension, 2) the forecast covariance has a stable localized structure. Condition 2) is necessary for an intrinsic localization inconsistency to be bounded. This condition  is usually assumed in LEnKF operations, but it can also be verified for systems with weak local interaction and sparse observation by Theorem \ref{thm:formloc}. 

While the results here provide the first successive explanation of LEnKF performance with almost dimension independent sample size, there are several issues that require further study. In below we discuss a few of them.
\begin{enumerate}
\item There are several ways to apply the localization technique in EnKF. We discuss here only the domain localization with  standard EnKF procedures. In principle, our results can be generalized to the covariance localization/tempering technique, and also the popular ensemble square root implementation. But such generalization will not be trivial, as the Kalman gain will not be of a small bandwidth, and localization techniques will have unclear impact on the square root SVD operation.
\item This paper studies the sampling effect of LEnKF and shows the sampling error is controllable. Yet  LEnKF without sampling error, in other words, LEnKF in the large ensemble limit, is not well studied mathematically. The effect of the localization techniques on the classical Kalman filter controllability and observability condition is not known. This may lead to practical guidelines in the choice of localization radius.  
\item Theorem \ref{thm:formloc} provides the first proof that LEnKF covariance has a stable localized structure. But the conditions we impose here are quite strong, while localized structure is taken for granted in practice. How to show it in general nonlinear settings is a very interesting question.
\end{enumerate}

\section*{Acknowledgement}
This research is supported by the NUS grant R-146-000-226-133, where X.T.T. is the principal investigator.  The author thanks Andrew J. Majda, Lars Nerger and Ramon van Handel for their discussion on various parts of this paper. 

\bibliographystyle{unsrt}
\bibliography{EnKF}

\end{document}